\documentclass[12pt]{amsart}

\usepackage[T1]{fontenc}
\usepackage[english]{babel}
\usepackage{amsmath, amsthm, amsfonts, mathrsfs, amssymb, float}
\usepackage{mathtools}
\mathtoolsset{centercolon}
\usepackage{booktabs}
\usepackage{amsmath, amsfonts,enumerate}
\usepackage[colorlinks]{hyperref}
\usepackage{tikz}
\usetikzlibrary{arrows}
\usepackage{pifont}
\usepackage{color}
\usepackage{graphicx}

\mathtoolsset{showonlyrefs,showmanualtags}
\usepackage[msc-links]{amsrefs}

\makeatletter \@mparswitchfalse \makeatother

\usepackage{geometry}
\usepackage{enumerate}

\newtheorem{thmx}{Theorem}

\newtheorem*{remark*}{Remark B}

\newtheorem{theorem}{Theorem}
\newtheorem{lemma}[theorem]{Lemma}

\newtheorem{corollary}[theorem]{Corollary}
\newtheorem{proposition}[theorem]{Proposition}

\theoremstyle{definition}
\newtheorem{definition}[theorem]{Definition}

\newcommand{\normmm}[1]{{\left\vert\kern-0.25ex\left\vert\kern-0.25ex\left\vert #1
   \right\vert\kern-0.25ex\right\vert\kern-0.25ex\right\vert}}

\numberwithin{equation}{section}
\numberwithin{theorem}{section}

\let\<\langle
\def\ch{\raise 0.5ex \hbox{$\chi$}}

\let\\\cr
\let\phi\varphi

\let\epsilon\varepsilon

\makeatletter
\@namedef{subjclassname@2020}{\textup{2020} Mathematics Subject Classification}
\makeatother

\begin{document}

\title[Sharp weighted estimates for square functions]{Sharp weighted norm estimates for martingale square functions}

\author[W. Chen]{Wei Chen}
\address{School of Mathematics, Yangzhou University, Yangzhou 225002, China}
 \email{weichen@yzu.edu.cn}

\author[Y. Jiao]{Yong Jiao}
\address{School of Mathematics and Statistics, Central South University, HNP-LAMA, Changsha 410083, China}
\email{jiaoyong@csu.edu.cn}

\author[X. Quan]{Xingyan Quan}
\address{School of Mathematics and Statistics, Central South University, HNP-LAMA, Changsha 410083, China}
\email {quanxingyan@csu.edu.cn}

\author[L. Wu]{Lian Wu}
\address{School of Mathematics and Statistics, Central South University, HNP-LAMA, Changsha 410083, China}
\email{wulian@csu.edu.cn}


 \thanks{This paper was supported by the National Key R$\&$D Program of China (No.2023YFA1010800), the NSFC (Nos.11971419, 12125109, W2411005, 12361131578),  the Provincial Natural Science Foundation of Hunan (Nos.2024JJ1010, 2025ZYJ002, 2024RC1016).}

\subjclass[2020]{Primary: 60G46. Secondary: 60G42}
\keywords{Matrix weights, $A_p$-condition, vector-valued martingales, square functions}

\begin{abstract}
This paper is devoted to the study of quantitative weighted norm estimates for martingale square functions in both scalar-weighted and matrix-weighted settings. In particular, we introduce the martingale square functions $S_W$ via matrix weights $W$, and then use the matrix $A_p$ condition, introduced in our previous work \cite{ChenQuanJiaoWu}, to characterize the $L_p$ estimate for $S_W$. Our proof mainly relies on the idea of sparse dominations, which leads to the explicit information on the characteristic of the matrix weight involved. For the range  $1<p\leq 2$, our result is sharp in terms of the characteristic of the matrix weight. With some modification on the arguments, we can further improve the result in scalar settings by obtaining the optimal exponent of the characteristic of the weight involved for all indices $1<p<\infty$, addressing a fundamental problem from the classical martingale theory.
 \end{abstract}

\maketitle

\section{Introduction}
Square functions and inequalities associated with them play a significantly important role in the development of probability, harmonic analysis,  ergodic theory and many other areas of mathematics.
Inequalities for martingale square functions, referred to as Burkholder-Gundy inequalities, lie at the heart of these developments.
Let $(\Omega, \mathcal F, \mathbb P)$ be a probability space equipped with a filtration $(\mathcal F_n)_{n\geq 1}$ whose union generates $\mathcal F$. For each $k\geq 1$, denote by $\mathbb E_k$ the conditioned expectation with respect to $\mathcal F_k$. Given  a martingale $f=(f_n)_{n\geq 1}$  adapted to  $(\mathcal F_n)_{n\geq 1}$, the associated difference sequence $df=(d_nf)_{n\geq 1}$ is given by  $d_1f=f_1$ and $d_n f=f_n-f_{n-1}$ for $n\geq 2$, and the square function  of $f$ is defined as $S(f)=(\sum_{k\geq 1}|d_k f|^2)^{1/2}$. The well known inequality, developed by Burkholder and Gundy  in the 1960s,  asserts that for $1<p<\infty$ there is a constant $c_{p}$ depending only on $p$ such that
\begin{equation}\label{classical-BG}
\|S(f)\|_{L_p(\Omega)}\leq c_{p} \|f\|_{L_p(\Omega)}
\end{equation}
For $p=1$, Burkholder \cite{MR208647} (see also Os\k ekowski \cite{MR2853676}) established a weak type $(1,1)$ version of \eqref{classical-BG}.
These profound results provide a probabilistic analogue of the Littlewood-Paley theory. They serve as  crucial tools in many different areas of mathematics. There are numerous generalizations of \eqref{classical-BG}; see for instance Pisier \cite{MR394135} for the extension of \eqref{classical-BG} to vector-valued martingales for which the deep connection between vector-valued martingale inequalities and geometric properties of Banach spaces was found, and Johnson-Schechtman \cite{MR995572} for a far reaching formulation of \eqref{classical-BG} in the context of rearrangement invariant spaces. For further details on martingale inequalities associated with square functions, see the monograph of Weisz \cite{MR1320508}.

The theory of weights has draw much attention in recent years. It all begun with the introduction of $A_p$ weights and the establishment of the weighted Hardy-Littlewood  inequalities by Muckenhoupt  \cite{MR293384}.  Izumisawa and Kazamaki \cite{MR436313} extended the $A_p$ condition to the probabilistic context. Under this condition and the extra reverse H\"{o}lder assumption on weights, they  established the weighted Doob maximal inequality. The extra reverse H\"{o}lder assumption was later successfully removed by Jawerth \cite{MR833361}. Let us briefly recall this result. By a weight on the probability space $(\Omega, \mathcal F, \mathbb P)$  we mean a positive function belonging to $L_1(\Omega)$. Given $1<p<\infty$,
it was shown in \cite{MR833361} that the Doob maximal operator is bounded on $L_p^w(\Omega)$ if and only if $w\in A_p$, i.e., the $A_p$ characteristic
$$[w]_{A_p}=\sup_{n\geq 1}\|\mathbb E_n (w) \mathbb E_n(w^{\frac{1}{1-p}})^{p-1} \|_{L_\infty(\Omega)}$$
is finite. This $A_p$ condition is useful in describing weighted norm estimates of other classical operators as well, see \cites{MR3748572,MR3767376,MR3916937,MR4926944} and extensive bibliographies therein.
In \cite{MR544802}, Bonami and L\'{e}pingle considered the weighted version of \eqref{classical-BG}. Given $1<p<\infty$ and $w\in \widehat{A}_\infty(\Omega)\cap \mathbb S$ (c.f. \cite{MR544802} or \cite[Definitions 2.6, 5.17]{MR3895312} for the introduction of these two conditions), their result can be reformulated as follows:
\begin{equation}\label{weighted-BG}
\|S(f)\|_{L_p^w(\Omega)}\leq c_{p,w} \|f\|_{L_p^w(\Omega)}
\end{equation}
where $c_{p,w}$ is a constant depending only on $p$ and $w$.
It is natural to ask whether the $A_p$ condition is sufficient for the  characterization of the above estimate. One may turn to the recent work \cite{MR4926944} of Domelevo, Petermichl and \v{S}kreb for an affirmative answer.

In addition, there is a very interesting aspect of the weight theory, concerning the extraction of the optimal dependence of the constants involved on the characteristic of a weight. This topic firstly appeared in Buckley's work \cite{MR1124164}, where the best exponent of the characteristic of the weight involved in weighted Hardy-Littlewood maximal inequalities was determined. A similar result on weighted Doob  inequalities can be found in Tanaka and Terasawa \cite[Corollary 4.5]{MR3004953}. Hyt\"{o}nen \cite{MR2912709}  addressed the famous $A_2$ conjecture for general Calder\'on-Zygmund operators. See also \cite{MR3748572,MR2652182,MR2524658} and references therein for similar results regarding other operators.

Our main concern of this paper is a sharp version of \eqref{weighted-BG} under the $A_p$ condition. More precisely, writing $c_{p,w}$ as $c_p[w]_{A_p}^{\kappa_p}$ in \eqref{weighted-BG}, we are interested in  the following fundamental problem:

\vspace{+0.25cm}

\noindent\textsf{\centerline{Given $\textsf w\in A_p$, what is the optimal choice $\kappa_p$ such that \eqref{weighted-BG} holds true~?}}

\vspace{+0.25cm}

\noindent Surprisingly, though the study of weighted inequalities has a rich history, it seems that the above problem has never really been satisfactorily resolved! There are substantial discussion on this problem in some special contexts. For the dyadic square function defined via the Haar system, Buckley \cite{MR1124164}  provided a quantitative weighted $L_2$ bound with $\kappa_2=3/2$. Later, Hukovic \cite{MR2697378} and Hukovic-Treil-Volberg \cite{MR1771755}  improved Buckley's result by showing that the optimal choice of $\kappa_2$ is actually $1$ rather than $3/2$. See also Wittwer \cite{MR1748283} and Petermichl and Pott \cite{MR1873024} for a different proof. In \cite{MR2854179}, Cruz-Uribe, Martell and P\'{e}rez received a considerable progress by establishing \eqref{weighted-BG} for dyadic square functions with $\kappa_p=\max\{1/2,(p-1)^{-1}\}$; moreover, they proved that this exponent is the best possible. For continuous-time martingales  with continuous-path, Ba\~nuelos and Os\k ekowski \cite{MR3748572} obtained \eqref{weighted-BG} for $p=2$ by using the so-called Bellman function method. Their result is sharp in terms of the dependence on $[w]_{A_2}$. But, the additional continuous-path hypothesis prevents us  deducing the corresponding result for discrete-time martingales. We should also mention the paper \cite{MR3985127}, containing the discussion of \eqref{weighted-BG} for $p=2$ in the special case that the filtration is atomic. Recently,  Domelevo, Petermichl and \v{S}kreb \cite{MR4926944} obtained a sharp weighted $L_p$ estimate for the maximal operator $Y^*$ of $Y$ with respect to $X$, where $Y$ and $X$ are uniformly integrable c\`{a}dl\`{a}g Hilbert space valued martingales and $Y$ is differentially subordinate to $X$ via the square bracket process. As a direct consequence of \cite[Corollary 1]{MR4926944}, one can infer \eqref{weighted-BG} with $\kappa_p=\max\{1,(p-1)^{-1}\}$ which, however, is only optimal when $1<p\leq 2$.

The present paper solves the problem discussed above in full generality. One of our principal results is the following.

\begin{thmx}\label{main1}
Let $1<p<\infty$. Given a scalar weight $w\in A_p$, we have
$$\|S f\|_{L_p^w(\Omega)}\lesssim_{p}[w]_{A_p}^{\max\{\frac12 ,\frac{1}{p-1}\}}\|f\|_{L_p^w(\Omega)},\quad \forall f\in L_p^w(\Omega).$$
The estimate is sharp in terms of the dependence on $[w]_{A_p}$.
\end{thmx}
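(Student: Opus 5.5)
We prove the upper bound by sparse domination followed by the standard sparse-form estimates, and then show sharpness by reduction to the dyadic case.

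\textbf{Step 1: sparse domination.} We first construct, for a fixed $f\in L_p^w(\Omega)$, a sparse family of stopping times (equivalently, a family $\mathcal S$ of sets $E\in\mathcal F_{n(E)}$ with disjoint "free parts" $\widetilde E\subset E$, $\mathbb P(\widetilde E)\ge \frac12\mathbb P(E)$) such that
\begin{equation}
S(f)^2\lesssim \sum_{E\in\mathcal S}\big(\mathbb E_{n(E)}|f|\big)^2\chi_E \quad\text{pointwise.}
\end{equation}
The construction is the usual stopping-time argument. Starting from $\Omega$ at level $1$, we stop at the first $n$ such that $\mathbb E_n|f|>C\,\mathbb E_{n_0}|f|$, or the conditional square function of the stopped-tail exceeds the same threshold. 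The weak type $(1,1)$ of $S$ (Burkholder) together with Doob's inequality guarantees that the stopped set has conditional measure at most $\frac12$.

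The main obstacle is the lack of regularity of a general filtration: the jump $d_\tau f$ at a stopping time $\tau$ is not controlled by $\mathbb E_{\tau-1}|f|$. To handle this, we split each term $|d_\tau f|^2$ into the part charged to the parent cube, and we absorb the jump at $\tau$ by $\mathbb E_\tau|f|$ of the child. This is legitimate because the child set is $\mathcal F_\tau$-measurable. In this way the sum telescopes into the sparse square sum above.

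\textbf{Step 2: weighted bound for the sparse square operator.} Set
\[
A_{\mathcal S}f=\Big(\sum_{E\in\mathcal S}(\mathbb E_{n(E)}|f|)^2\chi_E\Big)^{1/2}.
\]
We then run the argument of Cruz-Uribe, Martell and P\'erez (or Lerner's) in the martingale setting, which gives $\|A_{\mathcal S}\|_{L_p^w}\lesssim[w]_{A_p}^{\max\{1/2,1/(p-1)\}}$, treating the two ranges of $p$ separately.

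For $p\ge3$ we write $\|A_{\mathcal S}f\|_{L_p^w}^2=\|A_{\mathcal S}f^2\|$-type quantities and dualize against $g\in L_{(p/2)'}^{w}$. This reduces to a linear sparse form $\sum_E(\mathbb E|f|)^2\mathbb E(gw)\mathbb P(E)$. We bound it by inserting $\sigma=w^{1-p'}$, using the $A_p$ condition once, and using the sparse Carleson embedding together with Doob's inequality in $L^{\sigma}$ and $L^{w}$. Each maximal-operator norm contributes a power of $[w]_{A_p}$ (via Jawerth/Tanaka--Terasawa), and the powers combine to give $[w]_{A_p}^{1/2}$ after taking square roots.

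For $1<p\le3$ the exponent $\frac1{p-1}\ge\frac12$ is the claimed one. We obtain it by Rubio de Francia extrapolation, with a sharp-constant version in the martingale setting, from the $L_3$ estimate with exponent $\frac12$. This produces the exponent $\frac12\cdot\frac{3-1}{p-1}=\frac1{p-1}$.

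Making extrapolation work here requires constructing $A_1$ weights through the Rubio de Francia algorithm built on the Doob maximal operator with sharp constants. If that is delicate, we would instead estimate the sparse form directly. The alternative is to test on $f=\sigma\chi$ and apply the weighted Carleson lemma in $L^\sigma$, using $\sum_E\sigma(E)^{\cdots}$ with the sparse condition.

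\textbf{Step 3: sharpness.} The optimality follows by taking $\Omega=[0,1)$ with the dyadic filtration. In that case $S$ is the dyadic square function, and the lower bounds of Cruz-Uribe--Martell--P\'erez (power weights $|x|^{\alpha}$ with $\alpha\to -1$ or $p-1$) show that no smaller exponent is possible.
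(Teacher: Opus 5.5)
Your Step 1 is the paper's construction. In the scalar case the $\mathcal F_n$-measurable factors $\widehat w_n$ cancel in $\gamma_1,\gamma_2$, so the principal sets are essentially your stopping sets for $h=w^{-1/p}f$. The jump at $\kappa_2(P)$ is split exactly as you say: $|d_{\kappa_2(P)}h|^2\lesssim|\mathbb E_{\kappa_2(P)}h|^2+|\mathbb E_{\kappa_2(P)-1}h|^2$, with the second term charged to the parent through property (c) of Lemma~\ref{prop-principal-set}. State sparseness in the conditional form $\chi_E\le 2\,\mathbb E_{n(E)}(\chi_{\widetilde E})$ (property (e)). Your construction gives it, and it is what every weighted estimate below uses, not $\mathbb P(\widetilde E)\ge\frac12\mathbb P(E)$. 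Step 3 is also what the paper does: it defers to the dyadic examples of Cruz-Uribe, Martell and P\'erez.

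Step 2 is a genuinely different route, because the paper never extrapolates. For $1<p\le 2$ it uses $\mathcal T_{w,2}\le\mathcal T_{w,p}$ and Lemma~\ref{5.4'}. For $p>2$ it proves Lemma~\ref{sparse-enhanced} with $r=2$ by a three-factor H\"older inequality. There the $f$- and $g$-factors are bounded by Doob's inequality for $\mathbb E_n^{\sigma}$ and $\mathbb E_n^{w}$, where $\sigma=w^{1-p'}$ is the paper's $v$; these constants do not depend on $w$. All of $[w]_{A_p}^{\max\{2/(p-1),1\}}$ comes from the $L_\infty$ factor III, i.e.\ from $A_p$ combined with $\chi_{E(P)}\lesssim(\mathbb E_{\kappa_2(P)}(w\chi_{E(P)}))^{1/p}(\mathbb E_{\kappa_2(P)}(\sigma\chi_{E(P)}))^{1/p'}$.

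Your replacement, a single estimate at $p=3$ followed by sharp extrapolation, does work in a filtered space:
\begin{enumerate}
\item Buckley's bound $\|M\|_{L_p^w}\lesssim[w]_{A_p}^{1/(p-1)}$ for Doob's operator holds for every filtration.
\item The Rubio de Francia iterate satisfies $\sup_n\mathbb E_n(Rh)\le 2\|M\|\,Rh$.
\item The factorization bounds, e.g.\ $[w\,u^{p-3}]_{A_3}\le[w]_{A_p}[u]_{A_1}^{3-p}$ for $p<3$, use only H\"older and $\mathcal F_n$-measurability.
\end{enumerate}
Apply extrapolation to $S$ itself, since the sparse family depends on $f$. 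Extrapolate upward from $p_0=3$ as well; this yields $\frac12\max\{1,\frac{2}{p-1}\}$ for all $p$ at once.

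That is worth doing, because your $p\ge 3$ accounting is wrong as written. If the maximal operators contributed powers of $[w]_{A_p}$ (Jawerth, Tanaka--Terasawa) on top of the $[w]_{A_p}$ from the $A_p$ condition, you would end strictly above $[w]_{A_p}^{1/2}$. The maximal operators must be the weight-free Doob operators of $\mathbb E_n^\sigma$ on $L_p(\sigma)$ and of $\mathbb E_n^w$ on $L_{(p/2)'}(w)$.

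Moreover, a single use of $A_p$ closes the argument only at $p=3$. Pairing $L_p(\sigma)$ with $L_{(p/2)'}(w)$ by H\"older requires $\sigma^{2/p}w^{1-2/p}=w^{(p-3)/(p-1)}\equiv 1$. For $p>3$ the discrepancy must be absorbed by the averages, which is exactly what the sparseness/H\"older device above does.

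At $p=3$ your sketch is right and short. We have $(\mathbb E_n\sigma)^2\mathbb E_n w\le[w]_{A_3}$, and conditional sparseness moves each integral onto the disjoint sets $\widetilde E$. Since $\sigma^{2/3}w^{1/3}=1$, H\"older with exponents $\frac32,3$ finishes.

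So your route needs only this elementary sparse bound but must import sharp extrapolation into the martingale setting. The paper's Lemma~\ref{sparse-enhanced} is self-contained and mirrors its matrix-weighted argument, where such extrapolation is not readily available.
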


Actually, we will study the above statement from a much wider matrix-weighted perspective. This is a newly emerging area, although the investigation can be tracked back to Wiener and Masani  \cite{MR97859}, where the matrix-weighted  $L_2$ space  was introduced and applied on the prediction theory for multivariate stochastic processes. In 1997, Treil and Volberg \cite{MR1428818} generalized the $A_2$ condition to the matrix-weighted setting and  used this condition to describe the matrix-weighted  $L_2$ boundedness of Hilbert transforms. By using the language of metrics and their averagings, Volberg \cite{MR1423034} introduced the martix $A_p$ condition  and proved that the Hilbert transform is bounded on the matrix-weighted $L_p$ space if and only if the matrix weight belongs to the matrix $A_p$ class. See also Nazarov and Treil \cite{MR1428988} for the introduction of matrix $A_p$ weights. These works pave the way towards the consideration of matrix-weighted inequalities, after which lots of papers have been devoted to this area, see for instance, \cite{MR3452715, 2210.09443, MR1813604,2402.06961, MR2015733}.

In the recent article \cite{ChenQuanJiaoWu}, the authors extended the concept of matrix $A_p$ condition to the probabilistic context.

The main contribution of the paper \cite{ChenQuanJiaoWu} is twofold: the first is to construct  reducing matrix-valued functions in the probabilistic context so that the matrix $A_p$ condition can be properly formulated and the second is to show that under this condition  the matrix-weighted Doob maximal inequalities hold true for vector-valued martingales.

Based on \cite{ChenQuanJiaoWu}, we consider in this paper the matrix-weighted estimates for martingale square functions. To state the result clearly, we fix some notations. Let $1\leq d<\infty$. A matrix weight $W:\Omega\rightarrow \mathbb M_d(\mathbb C)$  is a $d\times d$  self-adjoint matrix function on  $\Omega$ (with integrable entries) such that $W(x)$ is positive definite for almost all $x\in \Omega$. Given a matrix weight $W$ and $1\leq p<\infty$, we define the matrix-weighted square function by
$$S_Wf=\Big(\sum_{k\geq 1}\| W^{\frac1p}d_{k}(W^{-\frac1p}f)\|_{\mathbb C^d}^2\Big)^{1/2},$$
where $f=(f_n)_{n\geq1}$ are $\mathbb C^d$-valued martingales. Note that in sharp contrast with the scalar-weighted setting, one needs to define
the square functions $S_W$  via matrix weights $W$. This  phenomenon
was already uncovered by Christ and Goldberg \cite{MR1813604} and Goldberg \cite{MR2015733}  while establishing
matrix-weighted versions of Hardy-Littlewood inequalities. Our second main result of this paper, concerning the extension of \eqref{weighted-BG} to the matrix-weighted setting, reads as follows.

\begin{thmx}\label{main2}
Let $1<p<\infty$. Given a matrix weight $W\in A_p$, we have $$\|S_Wf\|_{L_p(\Omega)}\lesssim_{p,d}[W]_{A_p}^{\max\{\frac12+\frac{1}{p(p-1)},\frac{1}{p-1}\}}\|f\|_{L_p(\Omega;\mathbb C^d)},\quad \forall f\in L_p(\Omega;\mathbb C^d).$$
For $1<p\leq 2$, the exponent of $[W]_{A_p}$ equals to $1/(p-1)$ which is sharp.
\end{thmx}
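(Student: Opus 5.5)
Throughout, write $\mathcal W_k$ for the reducing operator of $W$ at level $k$ constructed in \cite{ChenQuanJiaoWu}: an $\mathcal F_k$-measurable matrix function with $\|\mathcal W_k e\|\simeq_{d} (\mathbb E_k\|W^{1/p}e\|^p)^{1/p}$, and similarly $\mathcal W_k'$ for $W^{-p'/p}$. I will use the matrix-weighted Doob inequality from that paper, together with the $A_p$ characteristic $[W]_{A_p}\simeq\sup_k\|\,\|\mathcal W_k\mathcal W_k'\|^p\|_\infty$.

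The plan is a sparse domination argument. First reduce to $f$ with finitely many nonzero differences, set $g=W^{-1/p}f$, and write $d_kg=\mathbb E_kg-\mathbb E_{k-1}g$.

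\textbf{Step 1: stopping times.} Build stopping times by a Calderón–Zygmund/principal-set construction. Starting from $\tau_0=1$, let $\tau_{j+1}$ be the first $n>\tau_j$ at which either
\[
\|\mathcal W_{\tau_j}\,\mathbb E_n g\| > C\,\mathbb E_{\tau_j}\|\mathcal W_{\tau_j} g\|
\]
or the reducing operators $\mathcal W_n$ change appreciably relative to $\mathcal W_{\tau_j}$. As in the Doob argument of \cite{ChenQuanJiaoWu}, the sets $E_j=\{\tau_j<\infty\}$ form a sparse family: $\mathbb P(E_{j+1}\mid\mathcal F_{\tau_j})\le \tfrac12$ on $E_j$. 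Beware that predictability issues force $d_{\tau}$ terms at the stopping level to be handled separately, using $\mathbb E_{\tau-1}$.

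\textbf{Step 2: pointwise sparse bound.} Split
\[
S_Wf^2=\sum_j\ \sum_{\tau_j<k\le\tau_{j+1}}\|W^{1/p}d_kg\|^2 .
\]
On each block, replace $W^{1/p}$ by $W^{1/p}\mathcal W_{\tau_j}^{-1}\mathcal W_{\tau_j}$. The inner sum is then controlled by $\|W^{1/p}\mathcal W_{\tau_j}^{-1}\|^2$ times a square function of the stopped martingale $\mathcal W_{\tau_j}g$. That stopped martingale is bounded by $C\,\mathbb E_{\tau_j}\|\mathcal W_{\tau_j}g\|$ up to the final jump. Apply the unweighted $L_2$ (Burkholder–Gundy) bound conditionally on $\mathcal F_{\tau_j}$ to get
\[
\mathbb E_{\tau_j}\sum_{\tau_j<k\le\tau_{j+1}}\|\mathcal W_{\tau_j}d_kg\|^2\lesssim \big(\mathbb E_{\tau_j}\|\mathcal W_{\tau_j}g\|\big)^2 .
\]
This yields a square-sparse form
\[
\|S_Wf\|_{L_p}^p\lesssim \mathbb E\Big(\sum_j \mathbf 1_{E_j}\,\|W^{1/p}\mathcal W_{\tau_j}^{-1}\|^2\,\langle \|\mathcal W_{\tau_j}W^{-1/p}f\|\rangle_{\tau_j}^2\Big)^{p/2}.
\]
The quantity $\langle \|\mathcal W_{\tau_j}W^{-1/p}f\|\rangle_{\tau_j}\le \langle\|\mathcal W_{\tau_j}W^{-1/p}\|^{p'}\rangle^{1/p'}\langle\|f\|^p\rangle^{1/p}$ is estimated by Hölder, contributing $[W]_{A_p}^{1/p}$-type factors.

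\textbf{Step 3: bounding the sparse form.}

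For $p\le2$, use $(\sum a_j^2)^{p/2}\le\sum a_j^p$. The problem reduces to a linear sparse sum
\[
\sum_j \mathbb E\big(\mathbf 1_{E_j}\|W^{1/p}\mathcal W_{\tau_j}^{-1}\|^p\big)\cdot(\cdots)^p ,
\]
where $\mathbb E_{\tau_j}\|W^{1/p}\mathcal W_{\tau_j}^{-1}\|^p\lesssim_d1$. This leaves $\sum_j\mathbb E\big(\mathbf 1_{E_j}\langle\|\mathcal W_{\tau_j}W^{-1/p}f\|\rangle_{\tau_j}^p\big)$, which is controlled by the matrix Doob maximal function of \cite{ChenQuanJiaoWu} (bound $[W]_{A_p}^{1/(p-1)}$ in $L_p$) together with sparseness. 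Sparseness gives Carleson-type control $\sum_j\mathbf 1_{E_j}$ with $\mathbb E\sum_{i\ge j}\mathbf 1_{E_i}\lesssim \mathbb P(E_j)$. Taking $p$-th roots gives the exponent $1/(p-1)$.

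For $p>2$, use duality with $L_{(p/2)'}$. Test $S_Wf^2$ against $h\ge0$ and bound
\[
\sum_j\mathbb E\big(\mathbf 1_{E_j}\,\mathbb E_{\tau_j}(\|W^{1/p}\mathcal W_{\tau_j}^{-1}\|^2h)\,(\cdots)^2\big).
\]
Handle the factor $\mathbb E_{\tau_j}(\|W^{1/p}\mathcal W_{\tau_j}^{-1}\|^2h)$ via Hölder with exponents $p/2$ and $(p/2)'$ against a weighted maximal function of $h$. The latter costs a power of $[W]_{A_p}$ coming from reverse Hölder, or $A_\infty$-like control of the scalar weights $\|W^{1/p}e\|^p$. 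Optimizing the bookkeeping should produce $[W]^{1/2+1/(p(p-1))}$; the final exponent is the maximum of the two regimes.

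\textbf{Sharpness for $p\le2$.} Embed the scalar case $W=w\,I_d$. It is sharp by the known dyadic example of Cruz-Uribe–Martell–Pérez \cite{MR2854179} with power weights, realized on the dyadic filtration of $[0,1)$.

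\textbf{Main obstacle.} I expect it to be Step 2: making the stopping-time blocks compatible with matrix reducing operators. The non-commutativity means $W^{1/p}$ cannot be pulled out freely, and the jump term at $\tau_{j+1}$ must be controlled without losing extra $[W]_{A_p}$ factors. I would first try to imitate the proof of the matrix Doob inequality in \cite{ChenQuanJiaoWu} verbatim, bounding the final jump term by $\mathbb E_{\tau_{j+1}}$ quantities that reappear at the next sparse level.
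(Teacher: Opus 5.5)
\textbf{Step 2 has a genuine gap.} Your stopping rule only controls $\sup_n\|\mathcal W_{\tau_j}\mathbb E_n g\|$ (and the drift of the reducing operators). A martingale that stays below $C\langle\|\mathcal W_{\tau_j}g\|\rangle_{\tau_j}$ can still have arbitrarily large quadratic variation on small sets. So all you obtain is the conditional estimate $\mathbb E_{\tau_j}\sum_{\tau_j<k\le\tau_{j+1}}\|\mathcal W_{\tau_j}d_kg\|^2\lesssim\langle\|\mathcal W_{\tau_j}g\|\rangle_{\tau_j}^2$, and this does not yield the square-sparse form you display.

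\begin{itemize}
\item In that form each block is multiplied by $\|W^{1/p}\mathcal W_{\tau_j}^{-1}\|^2$, which is not $\mathcal F_{\tau_j}$-measurable and is correlated with the block.
\item For $p\neq 2$, a conditional $L_2$ bound per block does not control $\mathbb E(\sum_j\cdots)^{p/2}$. Decoupling the two factors by H\"older would need extra integrability of $W^{1/p}$, i.e.\ reverse H\"older.
\end{itemize}

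What you need is a \emph{pointwise} bound on each block. The missing idea is to put the partial square function itself into the stopping rule. The paper stops at the first $m>n$ where either $\|\widehat W_n^{-1}\mathbb E_m(W^{-1/p}f)\|$ or $\bigl(\sum_{i=n+1}^m\|\widehat W_n^{-1}d_i(W^{-1/p}f)\|^2\bigr)^{1/2}$ exceeds $C_\gamma\,\mathbb E_n\|\widehat W_n^{-1}W^{-1/p}f\|$. Sparseness of the square-function part comes from the weak type $(1,1)$ inequality for the vector-valued square function, applied to $\widehat W_n^{-1}d_i(W^{-1/p}f)\chi_A/\mathbb E_n\|\widehat W_n^{-1}W^{-1/p}f\|$ with $A\in\mathcal F_n$ (Lemma \ref{lem-two-cond}(ii)).

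With this rule, before the stopping time each block is bounded pointwise by $C_\gamma^2\|W^{1/p}\widehat W_{\kappa_1(P)}\|^2(\mathbb E_{\kappa_1(P)}\|\widehat W_{\kappa_1(P)}^{-1}W^{-1/p}f\|)^2$. The jump at $\kappa_2(P)$ is split through $\mathbb E_{\kappa_2(P)}$ and $\mathbb E_{\kappa_2(P)-1}$, and iterating gives $S_Wf\lesssim\mathcal T_{W,2}f$ pointwise (Proposition \ref{Sdominition}). Once you have that, your $p\le 2$ computation goes through and gives $[W]_{A_p}^{1/(p-1)}$; it uses $\mathbb E_{\tau_j}\|W^{1/p}\mathcal W_{\tau_j}^{-1}\|^p\lesssim 1$, sparseness, and $\sup_n\mathbb E_n\|\mathcal W_nW^{-1/p}f\|\le[W]_{A_p}^{1/p}M'_Wf$. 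The sharpness argument via $W=wI_d$ is also fine. Do not use the H\"older bound by $\langle\|f\|^p\rangle^{1/p}$ from the end of Step 2: $\sup_n(\mathbb E_n\|f\|^p)^{1/p}$ is not bounded on $L_p$.

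\textbf{For $p>2$ the proposal is not a proof.} You appeal to reverse H\"older or $A_\infty$-type control to bound $h\mapsto\sup_n\mathbb E_n(\|W^{1/p}\mathcal W_n^{-1}\|^2h)$ on $L_{(p/2)'}$, and then assert the exponent. For a general filtration, $A_p$ weights need not satisfy any reverse H\"older inequality; the paper deliberately avoids it. H\"older alone leaves you with Doob's maximal operator on $L_1$. Since $\mathcal W_n^{-1}$ varies with $n$, you also cannot scalarize to a single weighted Doob inequality.

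The paper's route is as follows.
\begin{itemize}
\item Use $\mathcal T_{W,2}\le\mathcal T_{W,1}^{1-\theta}\mathcal T_{W,p}^{\theta}$ with $\theta=\frac{p}{2p-2}$, together with $\|\mathcal T_{W,p}\|\lesssim[W]_{A_p}^{1/(p-1)}$.
\item Treat $\mathcal T_{W,1}$ by duality in $L_{p'}$, so the norm $\|W^{1/p}\widehat W_n\|$ appears only to the first power.
\item The bound $\|\widehat W_n\widetilde W_n\|\le[W]_{A_p}^{1/p}$ and the identity $\widetilde W_n^{-1}W^{1/p}=\widehat V_n^{-1}V^{-1/p'}$ for the dual weight $V=W^{-p'/p}$ turn the relevant maximal operator into $M'_V$ (Lemma \ref{another-maximal}).
\item This gives $[W]_{A_p}^{1+\frac1{p(p-1)}}$ for $\mathcal T_{W,1}$, and the interpolation yields exactly $[W]_{A_p}^{\frac12+\frac1{p(p-1)}}$.
\end{itemize}
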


This result has been investigated for dyadic square functions by some authors. Hyt\"{o}nen, Petermichl and Volberg \cite{MR3936542} addressed the matrix $A_2$ conjecture for the dyadic square function. Isralowitz \cite{MR4159390} refined the sparse domination from \cite{MR3936542} and then utilized it to prove sharp matrix-weighted type inequalities for dyadic square functions in the range $1<p\leq2$. Recently, Treil \cite{MR4591773} generalized the result of Hyt\"{o}nen, Petermichl and Volberg  to the non-homogeneous situation.
Note that, for $1<p\leq 2$, the exponent of the characteristic of the weights in Theorem \ref{main2} is the same with that of the dyadic setting, and thus is  sharp. Also, note that for $2<p<\infty$,   the exponent is most likely not optimal and we leave it as a problem to the interested reader

Our approach to Theorem \ref{main2} depends on the idea of sparse domination. As a powerful tool, sparse domination nowadays has been widely used  in analytic settings, especially in the study of weighted norm estimates. It is Lacey \cite{MR3625108} who firstly applied this technique to investigate probabilistic object, the discrete-time martingale transform.  Tanaka and Terasawa \cite{MR3004953} introduced the notion of
principal sets for martingales and applied it to provide a Sawyer type characterization  for a generalized Doob maximal
operator. The first two authors \cite{MR4244905,MR4125846} found
the conditional sparsity of principal sets and enhanced the quantitative weighted estimate in \cite{MR3004953}.
Based on this property, the authors \cite{ChenQuanJiaoWu} developed the theory of matrix-weighted martingales without recourse to the reverse H\"{o}lder inequality of matrix weights or any regularity conditions on the underlying filtration.  In the present paper, we further advance the conditional sparsity of principal sets and the technique of sparse domination to derive quantitative (or sharp) weighted estimates for martingale square functions, marking a substantial improvement on previous results in the literature.

As mentioned above, Hyt\"{o}nen, Petermichl and Volberg \cite{MR3936542} and Isralowitz \cite{MR4159390} only proved  Theorem \ref{main2} for martingales adapted to the dyadic filtration on $\mathbb R^n$, and it is nontrivial to generalize their results to even the case of dyadic lattice in $\mathbb R^n$. The extension to martingales associated with a general filtration on an abstract probability space  requires completely new ideas. In particular, a new construction of a dominating sparse operator,  tailoring to general martingales, was needed. Moreover,  the argument in \cite{MR3936542,MR4159390} relies on the reverse H\"{o}lder inequality for $A_\infty$ weights, which is unapplicable to our situation here.

The proof of Theorem \ref{main1} follows the same pattern with that of Theorem \ref{main2}. Due to this, we will just  outline the proof of Theorem \ref{main1} by pointing out the main difference.

The paper is organized as follows. The next section contains some basic properties of matrix $A_p$ weights and some fundamental results.
Section \ref{constr} is where we present the detailed construction of  principal sets and display some important  properties that will be used in the proof of our main results. In Section \ref{estimate-for-sparse}, we introduce the sparse operators $\mathcal T_{W,r}$ and  establish an $L_p$ estimate for $\mathcal T_{W,2}$.
The domination between $S_W$ and $\mathcal T_{W,2}$, and also the proof of Theorem \ref{main2} will be presented in Section \ref{Sec-Burk}. In the last section, we sketch the proof of Theorem \ref{main1}.

Throughout, we use $d \in \mathbb{N}$ to denote the dimension and the letter $c$ denotes a  constant, which (unless otherwise specified) probably depends on the dimension, but it may change from occurrence to occurrence. We write  $A \gtrsim_\delta B$ if there is a constant $c_\delta>0$ depending only on $\delta$  such that  $A \geq c_\delta B$ and we use the notation $A \approx_\delta B$ when $A \lesssim_\delta B$ and $A \gtrsim_\delta B$ hold simultaneously.

\medskip

\section{Preliminaries} \label{sec-matx-w}
In this section, we recall some basic properties of martingale matrix $A_p$ weights obtained in our precious work \cite{ChenQuanJiaoWu} and some facts on vector-valued martingales.

\subsection{Matrix $A_p$ weights}
Assume that $(\Omega, \mathcal{F},\mathbb P)$ is a probability space. For $1\leq d<\infty$ and $1\leq p<\infty$, the unweighted vector-valued Lebesgue space $L_p(\Omega; \mathbb C^d)$ consists of all measurable functions $f: \Omega \rightarrow \mathbb{C}^d$ such that
$$\|f\|_{L_p\left(\Omega;\mathbb C^d\right)}:=\left(\int_{\Omega}\|f(t)\|_{\mathbb{C}^d}^p d \mathbb P\right)^{1 / p}<\infty.$$
A matrix weight $W:\Omega\rightarrow \mathbb M_d(\mathbb C)$  is a $d\times d$  self-adjoint matrix-valued function on  $\Omega$ (with integrable entries) such that $W(x)$ is positive definite for almost all $x\in \Omega$. Given a matrix weight $W$ and $1\leq p<\infty$, we define the corresponding weighted Lebesgue space $L_p(W)$ to be the space of all measurable functions $f:\Omega\rightarrow\mathbb C^d$ such that the norm
$$\|f\|_{L_p(W)}:=\Big(\int_\Omega \|W^{\frac1p}(t)f(t)\|_{\mathbb C^d}^p d \mathbb P \Big)^{1/p}<\infty.$$	
Obviously, if $W\equiv\mbox{Id}_{\mathbb M_d}$, then $L_p(W)$ goes back to $L_p(\Omega; \mathbb C^d)$; and if $d=1$, then $L_p(W)$ reduces to the usual weighted Lebesgue spaces $L_p^w(\Omega)$.

Let $(\mathcal F_n)_{n\geq 1}$ be an increasing sequence of $\sigma$-subalgebras of $\mathcal F$ such that $\mathcal F=\sigma(\cup_{n\geq 1}\mathcal F_n)$ and let $(\mathbb E_n)_{n\geq 1}$ be the associated conditional expectations defined for scalar-valued functions. Given $n\geq 1$, we may apply the $n$-th conditional expectation $\mathbb E_n$ to $d$-dimensional vector-valued functions by allowing it to act separately on each coordinate function. More precisely, let $f=(f^1,f^2,\ldots,f^d)$ be a  vector-valued function defined on $\Omega$. The $n$-th conditional expectation of $f$ is given by
$$\mathbb E_nf=(\mathbb E_n (f^{1}),\mathbb E_n(f^2)\ldots,\mathbb E_n(f^d)).$$
For any vector-valued function $h\in L_1(\Omega;\mathbb C^d)$, it is clear that
$$\|\mathbb E_nh\|_{\mathbb C^d}\leq \mathbb E_n\|h\|_{\mathbb C^d},\quad n\geq 1.$$
This elementary fact of conditional expectations will be frequently used  in the sequel. Similarly, for any  matrix weight $W=(w^{i,j})_{1\leq i,j\leq d}$, the $n$-th conditional expectation of $W$ is given by
$$\mathbb E_n W=(\mathbb E_n(w^{i,j}))_{1\leq i,j\leq d}.$$

As we know, for scalar weights $w$, two objects are essential in defining $A_p$ conditions which are $[\mathbb E_n w]^{1/p}$ and  $[\mathbb E_n (w^{-p'/p})]^{1/{p'}}$. Here and  what follows, $p'$ denotes the conjugate number of $p$. Given a matrix weight $W$, we recall below the reducing matrix-valued functions $\widetilde W_n$, $\widehat W_n$ and some relevant properties proved in \cite[Lemma 2.4, Lemma 2.5]{ChenQuanJiaoWu}. Notably, these functions play the roles of $[\mathbb E_n w]^{1/p}$ and  $[\mathbb E_n (w^{-p'/p})]^{1/{p'}}$, respectively.

\begin{lemma}\label{construction w} Let $1\leq p<\infty$ and let $W$ be a matrix weight. There exists a sequence of matrix-valued functions $(\widetilde W_n)_{n\geq 1}$ such that for every $n$,  the entries of $\widetilde W_n$ are adapted to $ \mathcal F_n$ and
\begin{equation}\label{def-tilde-w}
\|\widetilde W_ne\|^p_{\mathbb C^d}\thickapprox_d \mathbb E_n\big(\|W^{\frac1p}e\|_{\mathbb C^d}^{p}\big),\quad \forall e\in \mathbb C^d.
\end{equation}
Moreover,  for all $n\in\mathbb N$ we have
	\begin{equation}\label{w-1-1}
 \mathbb E_n\big(\|W^{
				\frac1p}\widetilde W_n^{-1}\|^{p}\big)=\mathbb E_n\big(\|\widetilde W_n^{-1}W^{
				\frac1p}\|^{p}\big) \lesssim_{p,d} 1.
		\end{equation}
	\end{lemma}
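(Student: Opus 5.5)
The plan is to build $\widetilde W_n$ pointwise from the norm $\rho_{n,x}(e):=\big(\mathbb E_n(\|W^{\frac1p}e\|^p_{\mathbb C^d})(x)\big)^{1/p}$ on $\mathbb C^d$ via John's ellipsoid (the reducing operator construction), and to handle measurability carefully.

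\emph{Step 1: regular conditional expectations.} First fix versions of $\mathbb E_n(\|W^{1/p}e\|^p)$ that are simultaneously defined for all $e$. Take a countable dense set $D\subset\mathbb C^d$ (rational coordinates). Outside a null set, and for all $e,e'\in D$, we then have $\rho_{n,x}(e+e')\le\rho_{n,x}(e)+\rho_{n,x}(e')$ (conditional Minkowski), positive homogeneity over rationals, and $\rho_{n,x}(e)\le \big(\mathbb E_n\|W^{1/p}\|^p\big)^{1/p}\|e\|$. The last quantity is finite a.s., since $\|W^{1/p}\|^p\le \|W\|\le \operatorname{tr}W$ is integrable. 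So $\rho_{n,x}$ extends by continuity to a seminorm on $\mathbb C^d$.

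It is in fact a norm a.s. Let $\lambda(x)=\inf_{\|e\|=1}\rho_{n,x}(e)$. If $\lambda=0$ on a set $B\in\mathcal F_n$ of positive measure, choose $\mathcal F_n$-measurably a unit vector $e(x)$ with $\rho_{n,x}(e(x))=0$, by measurable selection over a compact set. Since $e$ is $\mathcal F_n$-measurable, this gives $\mathbb E[\mathbf 1_B\|W^{1/p}e\|^p]=0$, which contradicts positive definiteness of $W$. To justify pulling $e$ inside the conditional expectation, approximate $e$ by simple $\mathcal F_n$-measurable vectors.

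\emph{Step 2: John ellipsoid.} For each $x$, take $\widetilde W_n(x)$ to be the positive definite matrix whose ellipsoid $\{e:\|\widetilde W_n(x)e\|\le1\}$ is the John ellipsoid of the unit ball of $\rho_{n,x}$. John's theorem then gives $\|\widetilde W_n e\|\le\rho_{n,x}(e)\le\sqrt d\,\|\widetilde W_n e\|$, which is \eqref{def-tilde-w} with constants $d^{p/2}$.

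$\mathcal F_n$-measurability is the main obstacle. I would first try to use that the John ellipsoid is the unique maximal-volume ellipsoid, and that it depends continuously on the convex body in Hausdorff distance. The body itself depends measurably on $x$, through the countable family $\rho_{n,x}(e)$ for $e\in D$, so the composition is measurable. If that is too delicate, I would fall back on a cruder $\mathcal F_n$-measurable choice. One option is to pick measurably a basis via a successive-minima or Gram–Schmidt procedure relative to $\rho_{n,x}$, choosing minimizers over countable dense sets. Any such choice yields equivalence with a constant depending only on $d$, which suffices for $\approx_d$.

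\emph{Step 3: \eqref{w-1-1}.} The two expressions in \eqref{w-1-1} are equal because $W^{1/p}$ and $\widetilde W_n$ are self-adjoint, so $\|W^{\frac1p}\widetilde W_n^{-1}\|=\|(W^{\frac1p}\widetilde W_n^{-1})^*\|=\|\widetilde W_n^{-1}W^{\frac1p}\|$. For the bound, let $e_1,\dots,e_d$ be the standard basis and use
$$\|A\|^p\le\Big(\sum_j\|Ae_j\|\Big)^p\le d^{p}\max_j\|Ae_j\|^p\le d^p\sum_j\|Ae_j\|^p.$$
Apply this with $A=W^{1/p}\widetilde W_n^{-1}$. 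The vectors $v_j=\widetilde W_n^{-1}e_j$ are $\mathcal F_n$-measurable, so, again justified by simple-function approximation,
$$\mathbb E_n\|W^{1/p}v_j\|^p=\rho_{n,\cdot}(v_j)^p\lesssim_d\|\widetilde W_n v_j\|^p=1.$$
Summing over $j$ gives $\lesssim_{p,d}1$.
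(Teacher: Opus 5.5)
The paper does not prove this lemma itself; it imports it from \cite[Lemmas 2.4, 2.5]{ChenQuanJiaoWu}. Your argument is correct and is essentially the standard reducing-operator construction behind that result: a regular version of the conditional norm $e\mapsto(\mathbb E_n\|W^{\frac1p}e\|_{\mathbb C^d}^p)^{1/p}$, an $\mathcal F_n$-measurable John-ellipsoid choice via uniqueness and Hausdorff continuity, and then \eqref{w-1-1} by testing \eqref{def-tilde-w} on $e=\widetilde W_n^{-1}e_j$.

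One point should be made explicit. In $\mathbb C^d\cong\mathbb R^{2d}$, uniqueness of the maximal-volume ellipsoid together with invariance of the unit ball under $e\mapsto e^{i\theta}e$ is what guarantees that the John ellipsoid has the form $\{e:\|\widetilde W_n e\|_{\mathbb C^d}\le 1\}$ with $\widetilde W_n$ a complex Hermitian matrix. Also, the resulting constant in \eqref{def-tilde-w}, $d^{p/2}$ (or $(2d)^{p/2}$ if you use the real John theorem), depends on $p$ as well as $d$; this is harmless downstream.
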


\begin{lemma}\label{construction w2} Let $1 < p <\infty$ and let $W$ be a matrix weight. There exist a sequence of matrix-valued functions $\widehat W_n$ such that for every $n$, the  entries of $\widehat W_n$ are adapted to $ \mathcal F_n$ and
		\begin{equation}\label{w'-1-0}
			\|\widehat W_ne\|^{p'}_{\mathbb C^d}\thickapprox_d \mathbb E_n\big(\|W^{-\frac1p}e\|_{\mathbb C^d}^{p'}),\quad \forall e\in\mathbb C^d.
		\end{equation}
Moreover, for all $n\in\mathbb N$ we have
\begin{equation}\label{w'-1-2}
\mathbb E_n\big(\|W^{-\frac1p}\widehat W_n^{-1}\|^{p'}\big)=\mathbb E_n\big(\|\widehat W_n^{-1}W^{-\frac1p}\|^{p'})\lesssim_{p,d} 1.
\end{equation}
\end{lemma}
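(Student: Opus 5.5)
The plan is to build $\widehat W_n$ from a conditional norm in the same way $\widetilde W_n$ is built in Lemma \ref{construction w}, only with $W^{-\frac1p}$ in place of $W^{\frac1p}$ and $p'$ in place of $p$. Both facts in the statement will then come from one pointwise John ellipsoid construction followed by a norm estimate.

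\textbf{Step 1: the conditional norm.} For each $n$ and a.e.\ $\omega$, define
$$\rho_{n,\omega}(e)=\big(\mathbb E_n\|W^{-\frac1p}e\|_{\mathbb C^d}^{p'}\big)^{1/p'}(\omega),\qquad e\in\mathbb C^d.$$
First I fix a regular version of the conditional expectation on the countable dense set $(\mathbb Q+i\mathbb Q)^d$. By Minkowski's inequality for conditional expectations, $\rho_{n,\omega}$ is then a seminorm on this set for a.e.\ $\omega$, so it extends by continuity to all of $\mathbb C^d$.

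It is in fact a norm. Since $W^{-\frac1p}$ is invertible a.e., $\rho_{n,\omega}(e)>0$ for each fixed $e\neq0$, almost surely. To pass to all $e$ at once, I use compactness of the unit sphere together with the lower bound $\rho_{n,\omega}(e)\ge \mathbb E_n\|W^{-\frac1p}e\|$. Note that $\mathbb E_n|\langle W^{-\frac1p}e,u\rangle|$ is continuous in $e$.

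\textbf{Step 2: John ellipsoid and measurability.} Apply John's theorem to the unit ball of $\rho_{n,\omega}$. This gives a positive matrix $A_n(\omega)$ with $\|A_n(\omega)e\|\le\rho_{n,\omega}(e)\le\sqrt d\,\|A_n(\omega)e\|$. Set $\widehat W_n=A_n$, which yields \eqref{w'-1-0} with constants depending only on $d$.

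I expect this measurability step to be the main obstacle. One must check that $\omega\mapsto A_n(\omega)$ can be chosen $\mathcal F_n$-measurable. My first attempt is to note that the John ellipsoid of a convex body is unique and depends continuously, in the Hausdorff metric, on the body. The body in turn depends measurably on $\omega$, because its support data are given by $\mathcal F_n$-measurable functions on a countable dense set.

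If that is awkward, I will use a fallback. I replace John's ellipsoid by the explicit matrix $\big(\mathbb E_n (W^{-\frac1p}{}^*\cdots)\big)$-type construction: take $B_n(\omega)$ to be the positive square root of the Gram matrix $\big(\int_{S}\rho_{n,\omega}(\cdot)\ldots\big)$ with respect to a fixed measure on the sphere. This is clearly measurable and equivalent to $\rho$ up to $d$-dependent constants. The construction is identical to that of Lemma \ref{construction w}, so I may simply invoke that argument verbatim.

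\textbf{Step 3: the bound \eqref{w'-1-2}.} The equality of the two quantities holds because both matrices are self-adjoint, so $\|W^{-\frac1p}\widehat W_n^{-1}\|=\|(\widehat W_n^{-1}W^{-\frac1p})^*\|$. For the bound itself, let $\{e_j\}$ be the standard basis. Using $\|M\|\le\sum_j\|Me_j\|$ and the fact that $\widehat W_n^{-1}$ is $\mathcal F_n$-measurable, so that it may be pulled inside $\mathbb E_n$ pointwise (by the regular version, or by approximating with simple $\mathcal F_n$-measurable matrices), I get
$$\mathbb E_n\|W^{-\frac1p}\widehat W_n^{-1}\|^{p'}\lesssim_{p,d}\sum_j \mathbb E_n\|W^{-\frac1p}\widehat W_n^{-1}e_j\|^{p'}=\sum_j\rho_{n,\cdot}(\widehat W_n^{-1}e_j)^{p'}.$$
By \eqref{w'-1-0}, the last sum is bounded by $d^{p'/2}\sum_j\|\widehat W_n\widehat W_n^{-1}e_j\|^{p'}=d^{1+p'/2}$. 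This gives the claimed bound.

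The freezing step deserves its own justification. For $\mathcal F_n$-measurable simple $v=\sum_k v_k\chi_{E_k}$ we have $\mathbb E_n\|W^{-\frac1p}v\|^{p'}=\sum_k\chi_{E_k}\rho(v_k)^{p'}$. The general case follows by monotone or dominated approximation, using the continuity of $\rho$ in $e$.
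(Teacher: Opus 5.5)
\textbf{The gap: positivity in Step 1.} Your outline is the natural one, and Step 3 is correct: the adjoint identity, the bound $\|M\|\le\sum_j\|Me_j\|_{\mathbb C^d}$, and freezing the $\mathcal F_n$-measurable vectors $\widehat W_n^{-1}e_j$ all work. The step that fails as written is at the end of Step 1, where you pass from `$\rho_{n,\omega}(e)>0$ a.s.\ for each fixed $e\neq0$' to `a.s., $\rho_{n,\omega}$ is a norm'.

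Positivity on a countable dense set, together with continuity and compactness of the sphere, does not exclude a zero along an $\omega$-dependent direction lying outside that set. For instance, $e\mapsto|\sqrt2\,e_1-e_2|$ on $\mathbb C^2$ is positive at every nonzero point of $(\mathbb Q+i\mathbb Q)^2$ but vanishes at $(1,\sqrt2)$. The lower bound $\rho_{n,\omega}(e)\ge\mathbb E_n\|W^{-\frac1p}e\|_{\mathbb C^d}$ has exactly the same defect.

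This step is essential. Without it the unit ball of $\rho_{n,\omega}$ may be unbounded, John's theorem yields no invertible $A_n(\omega)$, and $\widehat W_n^{-1}$ in \eqref{w'-1-2} is undefined.

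The repair is to bound uniformly in $e$ \emph{before} conditioning. Pointwise, $\|W^{-\frac1p}e\|_{\mathbb C^d}\ge\|W^{\frac1p}\|^{-1}\|e\|_{\mathbb C^d}$. Hence $\rho_{n,\omega}(e)\ge\big(\mathbb E_n\|W^{\frac1p}\|^{-p'}\big)^{1/p'}(\omega)\,\|e\|_{\mathbb C^d}$ for all $e$ simultaneously: first on the countable set, then by continuity. The conditional expectation of the a.s.\ strictly positive function $\|W^{\frac1p}\|^{-p'}$ is a.s.\ strictly positive, which gives the norm property.

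Dually, you need $W^{-\frac{p'}{p}}$ to have integrable entries, so that $\rho_{n,\omega}\le\big(\mathbb E_n\|W^{-\frac1p}\|^{p'}\big)^{1/p'}\|\cdot\|_{\mathbb C^d}$ is finite. This hypothesis is implicit in the lemma itself: otherwise no finite $\widehat W_n$ can satisfy \eqref{w'-1-0}.

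\textbf{Step 2 and comparison with the paper.} Your first route to measurability in Step 2 can be completed. Its key input, however, is continuity of the John ellipsoid map in the Hausdorff metric on symmetric convex bodies trapped between two balls. That is a genuine lemma (it follows from uniqueness of the John ellipsoid plus compactness of the admissible ellipsoids), and you must prove or cite it. Measurability of $\omega\mapsto\rho_{n,\omega}$ then follows from its values on the countable dense set.

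The fallback is not a construction as written. A Gram-type matrix can work, e.g.\ the inertia matrix of the unit ball of $\rho_{n,\omega}$, but its comparability with dimensional constants is a theorem of convex geometry, not something `clearly' true.

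Finally, the paper does not prove this lemma; it quotes it from \cite{ChenQuanJiaoWu}. It is simply Lemma \ref{construction w} applied to the dual weight $V=W^{-\frac{p'}{p}}$ with exponent $p'$. Since $V^{\frac1{p'}}=W^{-\frac1p}$, \eqref{def-tilde-w} and \eqref{w-1-1} for $(V,p')$ are exactly \eqref{w'-1-0} and \eqref{w'-1-2} with $\widehat W_n:=\widetilde V_n$. This is the identification $\widetilde V_n=\widehat W_n$ the paper uses later. Given Lemma \ref{construction w}, that one line replaces your argument. Your proposal instead re-derives the construction underlying Lemma \ref{construction w}, which is legitimate once the positivity step is repaired.
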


We now recall the definition of matrix $A_p$ condition introduced in \cite[Definition 2.6]{ChenQuanJiaoWu}.

\begin{definition}\label{Type-2}
Given $1<p<\infty$ and a matrix weight $W$, we say that $W$ satisfies the $A_p$ condition (denoted by $W\in A_p$) if
		\begin{equation}\label{equ-defi-2}
			[W]_{A_p}:=\sup_{n\geq 1}\|\widetilde W_n\widehat W_n\|^p<\infty.
		\end{equation}
Similarly, we say that $W$ satisfies the $A_1$ condition (denoted by $W\in A_1$) if
\begin{equation}\label{equ-defi-33}
[W]_{A_1}:=\sup_{n\geq 1}\|\widetilde{W}_nW^{-1}\|<\infty.
\end{equation}
The number $[W]_{A_p}$ is called the $A_p$ characteristic of $W$.
\end{definition}

The following result is taken from \cite[Corollary 2.11]{ChenQuanJiaoWu}, which provides equivalent descriptions of the characteristic of the matrix $A_p$ weight.

\begin{lemma} \label{equ-defi-ap}
Let $1<p<\infty$. Given  a matrix weight $W\in A_p$, we have
\begin{equation}\label{equi-chara}
[W]_{A_p}\approx_{p,d} \sup_{n\geq 1}\mathbb E_n\big(\|\widehat W_nW^{\frac{1}{p}}\|^{p}\big)\approx_{p,d}  \sup_{n\geq 1}\Big(\mathbb E_n\big(\|\widetilde W_nW^{-\frac{1}{p}}\|^{p'})\Big)^{\frac{p}{p'}}.
\end{equation}
\end{lemma}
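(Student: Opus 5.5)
The plan is to reduce both expressions in \eqref{equi-chara} to the quantity $\|\widetilde W_n\widehat W_n\|^p$ for each fixed $n$. I will use the defining equivalences \eqref{def-tilde-w} and \eqref{w'-1-0}, together with the elementary fact that for any $d\times d$ matrix $A$,
$$\|A\|\approx_d \sum_{i=1}^d\|Ae_i\|_{\mathbb C^d},\qquad \|A\|=\|A^*\|,$$
where $\{e_i\}$ is the standard basis. I will assume, as is the case for the reducing matrices constructed in \cite{ChenQuanJiaoWu}, that $\widetilde W_n$ and $\widehat W_n$ are self-adjoint (positive definite) and $\mathcal F_n$-measurable. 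Since the equivalences hold pointwise in $n$ with constants depending only on $p,d$, taking $\sup_n$ at the end gives \eqref{equi-chara}.

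\textbf{Step 1: extend \eqref{def-tilde-w} and \eqref{w'-1-0} to random vectors.} I will first show that these equivalences hold with the fixed vector $e$ replaced by an $\mathcal F_n$-measurable vector field $v$:
$$\mathbb E_n\big(\|W^{\frac1p}v\|^p\big)\approx_d\|\widetilde W_nv\|^p,\qquad \mathbb E_n\big(\|W^{-\frac1p}v\|^{p'}\big)\approx_d\|\widehat W_nv\|^{p'}.$$
For $\mathcal F_n$-simple $v=\sum_j \chi_{A_j}c_j$ with $A_j\in\mathcal F_n$ disjoint, this follows by pulling $\chi_{A_j}$ out of $\mathbb E_n$ and applying \eqref{def-tilde-w} to each constant vector $c_j$. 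For general $v$, I approximate by simple functions and pass to the limit with monotone convergence; truncating $W$ first if integrability is an issue, and using continuity of both sides in $v$. This approximation argument is the only genuinely technical point, and I expect it to be routine.

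\textbf{Step 2: the first equivalence.} Using self-adjointness, then the basis decomposition, then Step 1 with $v=\widehat W_ne_i$:
$$\mathbb E_n\|\widehat W_nW^{\frac1p}\|^p=\mathbb E_n\|W^{\frac1p}\widehat W_n\|^p\approx_{p,d}\sum_i\mathbb E_n\|W^{\frac1p}\widehat W_ne_i\|^p\approx_d\sum_i\|\widetilde W_n\widehat W_ne_i\|^p\approx_{p,d}\|\widetilde W_n\widehat W_n\|^p.$$

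\textbf{Step 3: the second equivalence.} In the same way, applying Step 1 with the exponent $p'$ and $v=\widetilde W_ne_i$,
$$\mathbb E_n\|\widetilde W_nW^{-\frac1p}\|^{p'}\approx\sum_i\mathbb E_n\|W^{-\frac1p}\widetilde W_ne_i\|^{p'}\approx\sum_i\|\widehat W_n\widetilde W_ne_i\|^{p'}\approx\|\widehat W_n\widetilde W_n\|^{p'}.$$
Since $\|\widehat W_n\widetilde W_n\|=\|(\widetilde W_n\widehat W_n)^*\|=\|\widetilde W_n\widehat W_n\|$, raising this to the power $p/p'$ gives $\approx\|\widetilde W_n\widehat W_n\|^p$.

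Taking $\sup_n$ in Steps 2 and 3 and comparing with \eqref{equ-defi-2} yields \eqref{equi-chara}.

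If the reducing matrices were not self-adjoint, the adjoint identity would produce $\|\widetilde W_n\widehat W_n^*\|$ instead of $\|\widetilde W_n\widehat W_n\|$. In that case I would replace them by their polar parts $(\widetilde W_n^*\widetilde W_n)^{1/2}$ and $(\widehat W_n^*\widehat W_n)^{1/2}$, which satisfy the same equivalences; this is the point I would double-check against the construction in \cite{ChenQuanJiaoWu}.
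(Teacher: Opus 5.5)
The paper gives no proof of this lemma; it is quoted from \cite[Corollary 2.11]{ChenQuanJiaoWu}, so there is no in-paper argument to compare against. Your proof is correct and self-contained. The column decomposition $\|A\|\approx_d\sum_i\|Ae_i\|$, combined with the version of \eqref{def-tilde-w} and \eqref{w'-1-0} for $\mathcal F_n$-measurable vectors, gives for each $n$ and a.e.\ $\omega$ the two bounds $\mathbb E_n\|\widehat W_nW^{1/p}\|^p\approx_{p,d}\|\widetilde W_n\widehat W_n\|^p$ and $\mathbb E_n\|\widetilde W_nW^{-1/p}\|^{p'}\approx_{p,d}\|\widetilde W_n\widehat W_n\|^{p'}$. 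This is slightly more than the lemma asserts: the $A_p$ hypothesis only serves to make the suprema finite.

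Your self-adjointness assumption is the paper's own convention. The equalities in \eqref{w-1-1} and \eqref{w'-1-2}, and the identity $\|W^{1/p}\widehat W_n\|=\|\widehat W_nW^{1/p}\|$ in the proof of Lemma \ref{another-maximal}, rely on it. The assumption is genuinely needed, so you were right to flag it. Replacing $\widehat W_n$ by $U\widehat W_n$ with $U$ unitary preserves \eqref{w'-1-0}. But for the constant weight $W=\mathrm{diag}(N^p,N^{-p})$, with $\widetilde W_n=\mathrm{diag}(N,N^{-1})$ and $U$ the coordinate swap, one gets $\|\widetilde W_nU\widehat W_n\|=N^2$ while $\mathbb E_n\|U\widehat W_nW^{1/p}\|^p=1$. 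So the statement fails without the convention.

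The one step to tighten is the limit in Step 1.
\begin{itemize}
\item Monotone convergence is not available, since $\|W^{1/p}v_k\|$ need not increase with $k$.
\item Truncating $W$ does not help, because it changes the quantity that $\widetilde W_n$ reduces.
\end{itemize}
Instead, take $\mathcal F_n$-simple $v_k\to v$ with $\|v_k\|\le\|v\|$ and work on the $\mathcal F_n$-sets $\{\|v\|\le K\}$. There $K^p\|W\|\in L_1$ dominates, and conditional dominated convergence passes both sides of the simple-function equivalence to the limit. For the dual weight, intersect also with $\{\mathbb E_n\|W^{-1/p}\|^{p'}\le K\}$. The union of these sets over $K$ has full measure, by \eqref{w'-1-0} applied to $e_1,\dots,e_d$. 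Alternatively, you can freeze $v$ using a regular conditional distribution of $W$ given $\mathcal F_n$.
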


Given a matrix weight $W\in A_p$,  we call $V=W^{-\frac{p'}{p}}$ the dual weight of $W$. It is easy to see that the  reducing matrix operators associated with $V$ can be taken as $\widehat V_n=\widetilde W_n$, $\widetilde V_n=\widehat W_n$ for all $n\in\mathbb N$. Therefore, we have the following basic property of dual weights.

\begin{lemma}\label{prop_AP} Let $1<p<\infty$. Given a matrix weight $W\in A_p$, the dual weight $V=W^{-\frac{p'}{p}}\in A_{p'}$ and
$[V]_{A_{p'}}\approx_{p,d} [W]_{A_{p}}^{p'-1}.$
\end{lemma}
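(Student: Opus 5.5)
The plan is to work directly from Definition \ref{Type-2} applied to $V$. We will check that the reducing functions of $W$ can serve, with the roles swapped, as reducing functions for $V$. Then we compute $[V]_{A_{p'}}$ as a power of $[W]_{A_p}$, using the fact that the operator norm is invariant under taking adjoints.

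First we identify the reducing operators of $V$. We have $V^{\frac{1}{p'}}=W^{-\frac1p}$ and $V^{-\frac1{p'}}=W^{\frac1p}$, and moreover $(p')'=p$. The defining property \eqref{def-tilde-w} for $V$ with exponent $p'$ asks for $\mathcal F_n$-adapted $\widetilde V_n$ satisfying
\[
\|\widetilde V_ne\|^{p'}\approx_d \mathbb E_n\big(\|V^{\frac1{p'}}e\|^{p'}\big)=\mathbb E_n\big(\|W^{-\frac1p}e\|^{p'}\big).
\]
This is exactly \eqref{w'-1-0}, so we may take $\widetilde V_n=\widehat W_n$. Likewise, \eqref{w'-1-0} for $V$ with exponent $p'$ asks for
\[
\|\widehat V_ne\|^{p}\approx_d \mathbb E_n\big(\|V^{-\frac1{p'}}e\|^{p}\big)=\mathbb E_n\big(\|W^{\frac1p}e\|^{p}\big),
\]
which is \eqref{def-tilde-w}, so we may take $\widehat V_n=\widetilde W_n$.

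One caveat concerns well-definedness. The characteristic is defined through the specific reducing matrices built in Lemmas \ref{construction w} and \ref{construction w2}, which are not canonical. We therefore note that replacing these with any other adapted matrices satisfying the same two-sided norm equivalences changes $\|\widetilde V_n\widehat V_n\|$ only by a factor depending on $p$ and $d$. The reason is that for $\mathcal F_n$-measurable matrices $A$ and $B$ with $\|Ae\|\approx\|Be\|$ for all $e$, the matrix $AB^{-1}$ has norm $\lesssim_d 1$, and so does its inverse. Hence $\|AC\|\approx\|BC\|$ pointwise. This is the only subtle point, and the paper may simply adopt the choice $\widehat V_n=\widetilde W_n$, $\widetilde V_n=\widehat W_n$ as stated.

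Next we compute the characteristic. By definition,
\[
[V]_{A_{p'}}=\sup_n\|\widetilde V_n\widehat V_n\|^{p'}=\sup_n\|\widehat W_n\widetilde W_n\|^{p'}.
\]
The reducing matrices may be taken self-adjoint; if they are not, we replace $A$ by $(A^*A)^{1/2}$, which preserves $\|Ae\|$. With self-adjoint factors,
\[
\|\widehat W_n\widetilde W_n\|=\|(\widetilde W_n\widehat W_n)^*\|=\|\widetilde W_n\widehat W_n\|.
\]
Therefore
\[
[V]_{A_{p'}}\approx_{p,d}\sup_n\|\widetilde W_n\widehat W_n\|^{p'}=\big([W]_{A_p}^{1/p}\big)^{p'}=[W]_{A_p}^{p'/p}=[W]_{A_p}^{p'-1}.
\]
This also shows that $V\in A_{p'}$ whenever $W\in A_p$. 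As a cross-check, the same conclusion follows from Lemma \ref{equ-defi-ap} applied to $V$: the quantity $\sup_n\mathbb E_n\|\widehat V_nV^{\frac1{p'}}\|^{p'}=\sup_n\mathbb E_n\|\widetilde W_nW^{-\frac1p}\|^{p'}$ is comparable to $[W]_{A_p}^{p'/p}$ by the second equivalence in \eqref{equi-chara}.
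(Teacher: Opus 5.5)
Your proof is correct and is essentially the paper's argument. The paper simply observes that one may take $\widehat V_n=\widetilde W_n$ and $\widetilde V_n=\widehat W_n$, and then reads off $[V]_{A_{p'}}=\sup_n\|\widehat W_n\widetilde W_n\|^{p'}=[W]_{A_p}^{p'/p}=[W]_{A_p}^{p'-1}$, implicitly using that the reducing matrices are positive definite.

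One caution about your well-definedness aside. The bound $\|AB^{-1}\|\lesssim_d 1$ only lets you swap a \emph{left} factor, so replacing the right factor $\widehat V_n$ relies on self-adjointness, which is available here. Also, your fallback of substituting $(A^*A)^{1/2}$ for a non-self-adjoint $A$ is not harmless in the right slot, since it can change $\|\widetilde W_n\widehat W_n\|$ itself.
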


\subsection{Vector-valued martingales}
A sequence $f=(f_n)_{n\geq 1}$  in $L_1(\Omega;\mathbb C^d)$ is called a $d$-dimensional $\mathbb C^d$-valued martingale with respect to  $(\mathcal F_n)_{n\geq 1}$ if $f_n$ is $\mathcal F_n$-measurable and
$$\mathbb E_{n}(f_{n+1})=f_{n},\quad n\geq 1.$$
For a $\mathbb C^d$-valued martingale $f = ( f_n)_{n\geq1}$, we define its martingale difference by setting $d_1f=f_1$ and
 $$d_nf =f_n-f_{n-1},\quad n\geq 2.$$
  For $1\leq p< \infty$, if $f_n\in L_p(\Omega;\mathbb C^d)$ for all $n \geq 1$, then $f=(f_n)_{n\geq 1}$ is called a $\mathbb C^d$-valued $L_p$ martingale. In this case, if we further have
$$\|f\|_{L_p(\Omega;\mathbb C^d)}:=\sup_{n\geq 1}\|f_n\|_{L_p(\Omega;\mathbb C^d)}<\infty$$
then $f$ is called a $\mathbb C^d$-valued $L_p$-bounded martingale. As is well known, for $1<p<\infty$, the martingale $f=(f_n)_{n\geq1}$ is $L_p$-bounded if and only if there exists a function $f_{\infty}\in L_p(\Omega;\mathbb C^d)$ such that $f_n=\mathbb E_n(f_\infty)$. Hence, in this case, we may identify a martingale $f$ with its final value $f_\infty$. We refer to \cite{Liupeide,MR394135} for more information on vector-valued martingales.

Similarly, for $1<p<\infty$ and a scalar weight $w$, one could define $L_p^w$-bounded martingales. The above  identification remain valid for $L_p^w$-bounded martingales; namely, for $1<p<\infty$, the martingale $f=(f_n)_{n\geq1}$ is $L_p^w$-bounded if and only if there exists a function $f_{\infty}\in L_p^w$ such that $f_n=\mathbb E_n(f_\infty)$.

Given  $1<p<\infty$ and a matrix weight $W$, we now recall a vector-valued maximal operator introduced in \cite[Page 22]{ChenQuanJiaoWu}:
$${M'_Wf=\sup_{n\geq 1}\mathbb E_n\|\widehat W_n^{-1}W^{-\frac1p}f\|_{\mathbb C^d},}$$
where $f=(f_n)_{n\geq1}$ is a $\mathbb C^d$-valued $L_p$-bounded martingale. The following  estimate for $M'_W$ will play a crucial role in the proof of our main result; see
\cite[Proposition 4.8]{ChenQuanJiaoWu} for the proof.

\begin{proposition}\label{RMprop}
Let $1<p<\infty$. Given  a matrix weight $W\in A_p$, we have $$\|M'_Wf\|_{L_p}\lesssim_{p,d}[W]_{A_p}^{\frac1{p(p-1)}} \|f\|_{L_p(\Omega;\mathbb C^d)},\quad \forall \, f\in L_p(\Omega;\mathbb C^d).$$
\end{proposition}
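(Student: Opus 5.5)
The plan is to reduce the estimate to a sparse (principal-set) domination together with a Carleson-type embedding. The exponent $[W]_{A_p}^{1/(p(p-1))}$ should then appear as $[V]_{A_{p'}}^{1/p}$, using Lemma \ref{prop_AP}. In the scalar case this is the familiar mechanism. Set $\sigma=w^{-p'/p}$ and $h=|f|\sigma^{-1/p}$, so that $\|h\|_{L_p(\sigma)}=\|f\|_{L_p}$. Then
$$\frac{\mathbb E_n(w^{-1/p}|f|)}{(\mathbb E_n\sigma)^{1/p'}}=\frac{\mathbb E_n(\sigma h)}{\mathbb E_n\sigma}\,(\mathbb E_n\sigma)^{1/p}.$$
After sparse domination one needs $\sum_E (\mathbb E^\sigma_\tau h)^p\,\mathbb E_\tau\sigma\,\mathbb P(E)$, where $\mathbb E^\sigma_\tau h=\mathbb E_\tau(\sigma h)/\mathbb E_\tau\sigma$. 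This is controlled by $[\sigma]_{A_\infty}\|h\|^p_{L_p(\sigma)}\lesssim[w]_{A_p}^{1/(p-1)}\|f\|_p^p$. Taking $p$-th roots gives the claimed power.

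For the matrix case I would proceed as follows. First, I would build principal sets by a stopping-time construction in the spirit of Section \ref{constr}. Starting from a stopping time $\tau$, the next stopping time is the first $n>\tau$ at which one of two things happens. Either
$$\mathbb E_n\|\widehat W_\tau^{-1}W^{-\frac1p}f\|>2\,\mathbb E_\tau\|\widehat W_\tau^{-1}W^{-\frac1p}f\|,$$
or the matrix ratio $\|\widehat W_n^{-1}\widehat W_\tau\|$ exceeds a large constant $C$. Between consecutive stopping times, the factorization
$$\|\widehat W_n^{-1}W^{-\frac1p}f\|\le\|\widehat W_n^{-1}\widehat W_\tau\|\,\|\widehat W_\tau^{-1}W^{-\frac1p}f\|$$
gives the pointwise bound
$$M'_Wf\lesssim\sum_{\tau}\mathbb E_\tau\big(\|\widehat W_\tau^{-1}W^{-\frac1p}f\|\big)\mathbf 1_{E_\tau}.$$
Here $E_\tau$ are the parts of the principal sets not yet stopped.

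Second, I would prove conditional sparsity, $\mathbb E_\tau(\mathbf 1_{E_\tau})\ge\frac12$ on the set where $\tau$ is active. For the first stopping criterion this is the weak-type Doob inequality. For the second, I would use \eqref{w'-1-0} to bound $\|\widehat W_n^{-1}\widehat W_\tau\|$ by a conditional expectation of $\|W^{-\frac1p}\widehat W_\tau^{-1}\|^{p'}$. Its $\mathbb E_\tau$-average is $\lesssim1$ by \eqref{w'-1-2}, and Doob's weak-type inequality then makes the exceptional set small for $C$ large.

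Third, I would estimate the sparse sum in $L_p$. The supports $E_\tau$ are disjoint, so the $p$-th power passes inside the sum. Bounding each summand by conditional Hölder alone loses too much, and I cannot use reverse Hölder. Instead I would write $\|\widehat W_\tau^{-1}W^{-\frac1p}f\|\le\sum_{j=1}^d|\langle W^{-\frac1p}f,\widehat W_\tau^{-1\,*}e_j\rangle|$. This reduces matters to scalar-like quantities $\mathbb E_\tau(\sigma_\tau h)$, whose "weight" is $\|W^{-\frac1p}\widehat W_\tau^{-1}\|^{p'}$ frozen at level $\tau$. I would then run the weighted Carleson embedding, with the Carleson (testing) constant given by an $A_\infty$-type quantity. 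That quantity is bounded by $[V]_{A_{p'}}\approx[W]_{A_p}^{1/(p-1)}$ via Lemmas \ref{equ-defi-ap} and \ref{prop_AP}.

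The main obstacle is this last step. The scalar weight implicitly used changes with the stopping level, because of the $\widehat W_\tau^{-1}$ factor. The Carleson packing condition must therefore be verified uniformly, which requires the $\mathbb E_\tau$-comparability built into the second stopping criterion. My first attempt would be to test the packing condition against the dual weight $V$ directly, using \eqref{equi-chara} for $V$.
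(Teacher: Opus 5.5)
\paragraph{Verdict.} The paper does not prove this proposition; it quotes \cite[Proposition 4.8]{ChenQuanJiaoWu}. Your argument therefore has to stand on its own, and it has a genuine gap: Step~2 fails as stated, and Step~3 is never carried out.

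\paragraph{Step 2: the second stopping rule.} The factorization you use requires control of
\[
\|\widehat W_n^{-1}\widehat W_\tau\|=\sup_{y\neq 0}\frac{\|\widehat W_\tau y\|_{\mathbb C^d}}{\|\widehat W_n y\|_{\mathbb C^d}},
\]
that is, a \emph{lower} bound on $\mathbb E_n\|W^{-1/p}y\|^{p'}$ relative to $\mathbb E_\tau\|W^{-1/p}y\|^{p'}$. What \eqref{w'-1-0}, \eqref{w'-1-2} and Doob's weak-type inequality actually control is the reverse ratio, $\|\widehat W_n\widehat W_\tau^{-1}\|^{p'}\approx_d\mathbb E_n\|W^{-1/p}\widehat W_\tau^{-1}\|^{p'}$. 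For $d=1$ the two ratios are reciprocals, and conditional sparsity with an absolute $C$ is false. To see this, take
\begin{itemize}
\item $\mathcal F_1$ trivial and $\mathcal F_2=\sigma(B)$ with $\mathbb P(B)=\delta$ small;
\item $\sigma=w^{-p'/p}$ equal to $M$ on $B$ and $1$ off $B$;
\item $K=M\delta$ large.
\end{itemize}
Then $[w]_{A_p}\approx(1+K)^{p-1}$, while $\widehat w_2^{-1}\widehat w_1\approx(1+K)^{1/p'}\approx[w]_{A_p}^{1/p}$ on $\Omega\setminus B$, a set of probability $1-\delta$. So every fixed $C$ fails once $K$ is large.

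Sparsity can be restored only by taking $C\approx[W]_{A_p}^{1/p}$. This uses $\|(\widetilde W_n\widehat W_n)^{-1}\|\lesssim_d1$, which gives $\|\widehat W_n^{-1}\widehat W_\tau\|\lesssim[W]_{A_p}^{1/p}\|\widetilde W_n\widetilde W_\tau^{-1}\|$, together with \eqref{w-1-1}. That factor then multiplies your pointwise bound. The remaining sparse operator has norm $\gtrsim_d 1$ (look at its first term). Hence for $p>2$ the method already overshoots $[W]_{A_p}^{1/(p(p-1))}$, and at $p=2$ it uses up the whole budget. A $\mathbb P$-sparse domination of $M'_W$ with frozen reducing matrices is intrinsically lossy for general filtrations.

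A smaller point: your pointwise bound should carry $\mathbf 1_P$, not $\mathbf 1_{E(P)}$. Passing to $E(P)$ is legitimate only after integration, via $\chi_P\le 2\mathbb E_{\kappa_2(P)}\chi_{E(P)}$.

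\paragraph{Step 3: the exponent is never derived.} This is the step where the exponent is decided, and it is only named, not done. In the dyadic setting it is handled by the sharp reverse H\"older inequality for the scalar weights $\|W^{-1/p}e\|^{p'}$, uniformly in $e$. That tool is unavailable for general filtrations, and avoiding it is the point of the principal-set approach of \cite{ChenQuanJiaoWu}.

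Your hope that the second stopping rule supplies the needed comparability cannot work. That rule compares $\widehat W_n$ with $\widehat W_\tau$ only inside one principal set. A Carleson packing condition, by contrast, sums over all descendants, across generations whose starting reducing matrices are by construction not comparable, so the constants compound along chains.

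Your scalar heuristic works for a different reason. There you stop on $\mathbb E^\sigma_n h$, so the family is sparse with respect to $\sigma\,d\mathbb P$. The factor $(\mathbb E_n\sigma)^{1/p}$ is never frozen; it is absorbed by the packing estimate
\[
\int_Q\sup_{n\ge k}\mathbb E_n(\sigma\chi_Q)\,d\mathbb P\lesssim[\sigma]_{A_{p'}}\,\sigma(Q),\qquad Q\in\mathcal F_k,
\]
which itself follows from conditional sparsity and the $A_{p'}$ condition. Your matrix plan replaces this with unweighted stopping and a frozen $\widehat W_\tau$, and that substitution is what creates both difficulties.

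What is missing is a matrix analogue of this weighted stopping-and-packing mechanism: a way to handle the level dependence of $\widehat W_n$ without freezing it under a $\mathbb P$-sparse family. The proposal identifies the obstacle but does not supply this idea.
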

	
\medskip

\section{Principal sets and related properties}\label{constr}
In this section, we provide the detailed construction of principal sets that will be used in the proof of our main result. We start our consideration with a special family of functions. Let $1<p<\infty$ and let $W$ be a matrix weight. Given a $\mathbb C^d$-valued function $f$,  for $m>n$, we define
\begin{equation*}\label{function-related-to-square}
\gamma_1(n,m):=\frac{\left(\sum_{ i=n+1}^{  m}\|\widehat W_{n}^{-1}d_i( W^{-1/p}f)\|^2_{\mathbb C^d}\right)^{1/2}}{\mathbb E_{n}\|\widehat W_n^{-1}W^{-1/p}f\|_{\mathbb C^d}}\chi_{\{\mathbb E_{n}\|\widehat W_n^{-1}W^{-1/p}f\|_{\mathbb C^d}>0\}},
\end{equation*}

\begin{equation*}\label{function-related-to-maximal2}
\gamma_2(n,m):=\frac{\|\widehat W_n ^{-1}\mathbb E_m(W^{-1/p}f)\|_{\mathbb C^d}}{\mathbb E_{n}\|\widehat W_n^{-1} W^{-1/p}f\|_{\mathbb C^d}}\chi_{\{\mathbb E_{n}\|\widehat W_n^{-1} W^{-1/p}f\|_{\mathbb C^d}>0\}};
\end{equation*}and for $m\leq n$, we put $\gamma_i(n,m)=0$ for $i=1,$ $2$. Now set
\begin{equation*}\label{function-related-to-maximal3}
\gamma(n,m):=\max\left\{\gamma_1(n,m),\, \gamma_2(n,m)\right\}.
\end{equation*}
We collect below two useful properties of $\{\gamma(n,m)\}_{n,m\in \mathbb N}$.
\begin{lemma}\label{lem-two-cond}
The functions $\{\gamma(n,m)\}_{n,m\in \mathbb N}$
satisfy the following conditions:
\begin{enumerate}[\rm (i)]
\item For every $n,m\in \mathbb N$,  $\gamma(n,m)$ is adapted to $\mathcal F_{n\vee m}$.

\item For every $n\in\mathbb N$, there exists a constant $C_\gamma$ such that for every   $A\in\mathcal F_n$,
		$$\mathbb P\Big(A\cap\{\sup_{m>n}\gamma(n,m)>C_\gamma\}\Big)\leq\mathbb P\Big(A\cap\{\sup_{m>n}\gamma(n,m)\leq C_\gamma\}\Big). $$
\end{enumerate}
\end{lemma}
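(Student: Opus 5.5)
For (i), there is nothing to do beyond unwinding the definitions. For (ii), the plan is to recognise that, for fixed $n$, both $\gamma_1(n,\cdot)$ and $\gamma_2(n,\cdot)$ are the square function and the running norm of one normalised $\mathbb C^d$-valued martingale started at time $n$. The conditional bound then follows from the weak type $(1,1)$ inequalities of Doob and Burkholder, localised to $A\in\mathcal F_n$. The constant $C_\gamma$ will depend only on $d$ (in particular not on $n$, $f$ or $W$).

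\emph{Step 1 (adaptedness).} Write $g=W^{-1/p}f$. For $m>n$, the matrix $\widehat W_n^{-1}$ and the quantity $\lambda_n:=\mathbb E_n\|\widehat W_n^{-1}g\|_{\mathbb C^d}$ are $\mathcal F_n$-measurable. Each $d_i g$ with $i\le m$ and $\mathbb E_m g$ are $\mathcal F_m$-measurable. Hence $\gamma_1(n,m)$ and $\gamma_2(n,m)$ are $\mathcal F_{m}=\mathcal F_{n\vee m}$-measurable. For $m\le n$ they vanish, so (i) holds.

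\emph{Step 2 (reduction to a normalised martingale).} Fix $n$ and $A\in\mathcal F_n$, and put
\[
k:=\chi_A\chi_{\{\lambda_n>0\}}\,\lambda_n^{-1}\,\widehat W_n^{-1}g .
\]
Since the prefactor is $\mathcal F_n$-measurable, for $m\ge n$ it can be pulled through $\mathbb E_m$. This gives $\mathbb E_m k=\chi_A\chi_{\{\lambda_n>0\}}\lambda_n^{-1}\widehat W_n^{-1}\mathbb E_m g$ and $d_i k=\chi_A\chi_{\{\lambda_n>0\}}\lambda_n^{-1}\widehat W_n^{-1}d_i g$ for $i>n$. Consequently, on $A$,
\[
\gamma_2(n,m)=\|\mathbb E_m k\|_{\mathbb C^d},\qquad \gamma_1(n,m)\le \Big(\sum_{i>n}\|d_ik\|^2_{\mathbb C^d}\Big)^{1/2}=:S_{>n}(k).
\]
On $\{\lambda_n=0\}$ both sides vanish. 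Moreover
\[
\mathbb E\|k\|_{\mathbb C^d}=\mathbb E\big(\chi_A\chi_{\{\lambda_n>0\}}\lambda_n^{-1}\mathbb E_n\|\widehat W_n^{-1}g\|\big)=\mathbb P(A\cap\{\lambda_n>0\})\le\mathbb P(A).
\]
A truncation argument handles $\lambda_n$ small. Replacing $k$ by its restriction to $\{\lambda_n>\epsilon\}$ and letting $\epsilon\to0$ removes any integrability issue.

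\emph{Step 3 (weak type bounds).} Apply Doob's weak $(1,1)$ inequality to the $\mathbb C^d$-valued martingale $(\mathbb E_mk)_{m\ge n}$, i.e. to the submartingale $\|\mathbb E_m k\|$. Apply Burkholder's weak $(1,1)$ inequality for the square function to the same martingale, either in its Hilbert-space-valued form or coordinatewise at the cost of a constant $c_d$. Together they give, for $t>0$,
\[
\mathbb P\Big(\sup_{m>n}\|\mathbb E_m k\|>t\Big)+\mathbb P\big(S_{>n}(k)>t\big)\le \frac{(1+c_d)\,\mathbb E\|k\|}{t}\le\frac{(1+c_d)\,\mathbb P(A)}{t}.
\]
Since $\gamma$ vanishes off $A$ in these events, choosing $C_\gamma=2(1+c_d)$ yields $\mathbb P(A\cap\{\sup_{m>n}\gamma(n,m)>C_\gamma\})\le \tfrac12\mathbb P(A)$. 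This is equivalent to the claimed inequality.

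The only step needing care is Step 2: one must justify commuting the $\mathcal F_n$-measurable, possibly unbounded, normalisation with the conditional expectations $\mathbb E_m$, $m\ge n$. One must also ensure that the square function of differences $i>n$ is dominated by the full square function of $(\mathbb E_mk)_{m\ge n}$, whose first term is $\mathbb E_nk$. The truncation on $\{\lambda_n>\epsilon\}\cap\{\|\widehat W_n^{-1}\|\le N\}$ followed by monotone convergence should settle both points.
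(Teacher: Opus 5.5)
Your proposal is correct and follows essentially the same route as the paper. You normalise by the $\mathcal F_n$-measurable factor $\chi_A\chi_{\{\lambda_n>0\}}\lambda_n^{-1}\widehat W_n^{-1}$, control $\gamma_1$ by the Hilbert-space-valued weak type $(1,1)$ square function inequality, control $\gamma_2$ by Doob's weak type bound (which the paper re-derives with a stopping time and Chebyshev), and then choose $C_\gamma$ so that $\mathbb P(A\cap\{\sup_{m>n}\gamma(n,m)>C_\gamma\})\le\frac12\mathbb P(A)$. The only cosmetic difference is that you keep $\mathbb E_nk$ as the initial term and use $\mathbb E\|k\|_{\mathbb C^d}\le\mathbb P(A)$ directly, whereas the paper applies the square function bound to the differences of $k-\mathbb E_nk$ and absorbs a factor $2$ via the triangle inequality.
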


\begin{proof}
Indeed, (i) can be easily verified. Hence, we only check (ii) here. Fix $n\in\mathbb N$. For any $A\in\mathcal F_n$ and for arbitrary $\delta>0$, we have
\begin{align*}
&\mathbb P\Big(A\cap\{\sup_{m>n}\gamma(n,m)>\delta\}\Big)\\
&\quad=\mathbb P\Big(A\cap\big(\{\sup_{m>n}\gamma_1(n,m)>\delta\}\cup\{\sup_{m>n}\gamma_2(n,m)>\delta\}\big)\Big)\\
&\quad\leq \mathbb P\Big(A\cap\{\sup_{m>n}\gamma_1(n,m)>\delta\}\Big)+\mathbb P\Big(A\cap\{\sup_{m>n}\gamma_2(n,m)>\delta\}\Big)
\end{align*}
Let us first deal with the term involving $\gamma_1$. Set
$$\begin{cases}d_mF=0, &\mbox{if}\,\,m\leq n,\\
		d_mF=\frac{\widehat W^{-1}_{n}d_m(W^{-1/p}f)\chi_A}{\mathbb{E}_{n}\|\widehat{W}_{n}^{-1} W^{-1/p}f\|_{\mathbb C^d}}\chi_{\{\mathbb E_{n}\|\widehat W_n^{-1}W^{-1/p}f\|_{\mathbb C^d}>0\}},& \mbox{if}\,\, m>n.\end{cases}$$
Then  we have
\begin{align*}
\mathbb P\Big(A\cap\{\sup_{m>n}\gamma_1(n,m)>\delta\}\Big)&=\mathbb P\big(\{S(F)>\delta\}\big),
\end{align*}
where $S(F)=(\sum_{m}\|d_m F\|_{\mathbb C^d}^2)^{1/2}$. Applying the vector-valued analogue of weak-type estimate of square functions (cf. \cite{MR2853676}), we deduce that
$$
\begin{aligned}
\mathbb P\Big(A\cap\{\sup_{m>n}\gamma_1(n,m)>\delta\}\Big)&\leq \frac{\sqrt{e}}{\delta} \int_\Omega\|F\|_{\mathbb{C}^{d}} d \mathbb{P} \\
& \leq \frac{\sqrt{e}}{\delta} \int_{A} \frac{\|\widehat{W}_{n}^{-1} (W^{-1/p}f-\mathbb E_n(W^{-1/p}f))\|_{\mathbb{C}^{d}}}{\mathbb{E}_{n}\|\widehat{W}_{n}^{-1}W^{-1/p} f\|_{\mathbb{C}^{d}}} d \mathbb{P}\\
& \lesssim \frac{\sqrt{e}}{\delta} \int_{A} \frac{\|\widehat{W}_{n}^{-1} W^{-1/p} f\|_{\mathbb{C}^{d}}}{\mathbb{E}_{n}\|\widehat{W}_{n}^{-1} W^{-1/p} f\|_{\mathbb{C}^{d}}} d \mathbb{P}=\frac{\sqrt{e}}{\delta} \mathbb{P}(A).
\end{aligned}
$$
Hence, we may choose $C_{\gamma,1}$  such that
$$\mathbb P\Big(A\cap\{\sup_{m>n}\gamma_1(n,m)>C_{\gamma,1}\}\Big)\leq \frac14\mathbb P(A).$$

We turn to  deal with $\gamma_2$. Set
$$\tau=\inf\{m>n:\gamma_2(n,m)>\delta\}.$$
It follows from the Chebyshev  inequality that
\begin{equation*}\label{function-doob-first}
\begin{split}
&\mathbb P\Big(A\cap\{\sup_{m>n}\gamma_2(n,m)>\delta\}\Big)\\ &\leq \sum_{m> n}\int_{A\cap \{\tau=m\}}\frac{\gamma_2(n,m)}{\delta}d\mathbb P\\
		&=\sum_{m> n}\int_{A\cap \{\tau=m\}}\frac{\|\widehat  W^{-1}_n \mathbb E_m(W^{-1/p}f)\|_{\mathbb C^d}\chi_{\{\mathbb E_{n}\|\widehat  W^{-1}_nW^{-1/p}f\|_{\mathbb C^d}>0\}}}{{(\mathbb E_{n}\|\widehat  W^{-1}_nW^{-1/p}f\|_{\mathbb C^d}})\delta}d\mathbb P\\
		&\leq\sum_{m> n}\int_{A\cap \{\tau=m\}}\frac{\mathbb E_m(\|\widehat  W^{-1}_n W^{-1/p}f\|_{\mathbb C^d})\chi_{\{\mathbb E_{n}\|\widehat  W^{-1}_nW^{-1/p}f\|_{\mathbb C^d}>0\}}}{{(\mathbb E_{n}\|\widehat  W^{-1}_nW^{-1/p}f\|_{\mathbb C^d}})
		\delta}d\mathbb P  \\
	&=\int_{A}\frac{(\mathbb E_n\|\widehat  W^{-1}_n W^{-1/p}f\|_{\mathbb C^d})\chi_{\{\mathbb E_{n}\|\widehat  W^{-1}_nW^{-1/p}f\|_{\mathbb C^d}>0\}}}{{(\mathbb E_{n}\|\widehat  W^{-1}_nW^{-1/p}f\|_{\mathbb C^d}})
		\delta}d\mathbb P\leq \delta^{-1} \mathbb P(A).
	\end{split}
\end{equation*}
Thus, we are able to choose $C_{\gamma,2}$  depending only on $d$, $p$ such that \begin{align*}
	\mathbb P\Big(A\cap\{\sup_{m>n}\gamma_2(n,m)>C_{\gamma,2}\}\Big)&\leq \frac1{4}\mathbb P(A).
\end{align*}

Setting $C_\gamma=\max\{C_{\gamma,1},C_{\gamma,2}\}$ and putting all the above together, we get
\begin{align*}
&\mathbb P\Big(A\cap\{\sup_{m>n}\gamma(n,m)>C_\gamma\}\Big)\\
&\leq \mathbb P\Big(A\cap\{\sup_{m>n}\gamma_1(n,m)>C_{\gamma,1}\}\Big)+\mathbb P\Big(A\cap\{\sup_{m>n}\gamma_2(n,m)>C_{\gamma,2}\}\Big)\\
	&\leq\frac1{2}\mathbb P(A),
\end{align*}
which implies that
$$\mathbb P\Big(A\cap\{\sup_{m>n}\gamma(n,m)>C_\gamma\}\Big)\leq\mathbb P\Big(A\cap\{\sup_{m>n}\gamma(n,m)\leq C_\gamma\}\Big).$$
This verifies that $\{\gamma(n,m)\}_{n,m\in \mathbb N}$ satisfies  the condition  (ii).
\end{proof}

We now introduce the principal sets associated with $\{\gamma(n,m)\}_{n,m\in \mathbb N}$. For any fixed $n,$ define
	$$
	\tau_{1}:=\inf \left\{\ell >n: \gamma(n,\ell)>0\right\}.
	$$
	For every $j>n$, denote by
	$$P_{1, j}:=\{\tau_1=j\}$$
	and
	$$\kappa_{1}\left(P_{1, j}\right):=n,\quad\kappa_{2}\left(P_{1, j}\right):=j.$$
We call $\mathcal{P}_{1}=\left\{P_{1, j}:~j>n\right\}$ the first generation of principal sets. Without risk of confusion, we will write $P_1$ in place of $P_{1, j}$ in the sequel.
	
Let us get the second generation. For any $P_1\in\mathcal P_1$, define
	$$
	\tau_{P_{1}}:=\inf \left\{{\ell>\kappa_{2}\left(P_{1}\right)}: \gamma(\kappa_2(P_1),\ell) \chi_{P_{1}}>C_\gamma\right \}.
	$$
For every $j>\kappa_2(P_{1})$, denote by
	$$P_{2,j}:=\{\tau_{P_{1}}=j\}$$
	and $$\kappa_{1}\left(P_{2,j}\right):=\kappa_{2}\left(P_{1}\right),\quad\kappa_{2}\left(P_{2,j}\right):=j.$$
We call $\mathcal{P}(P_1):=\left\{P_{2, j}:~j> \kappa_2(P_{1})\right\}$ the collection of the principal sets generated by $P_1.$ Moreover, letting $\mathcal{P}_{2}=\bigcup_{P_1 \in \mathcal{P}_{1}} \mathcal{P}(P_1),$ we call $\mathcal{P}_{2}$ the second generation of principal sets.
	
The next generations can be defined by induction. For any fixed $P_{m}\in\mathcal P_{m},$ define
	$$
	\begin{aligned}
		\tau_{P_{m}}
		:=\inf \{&l>\kappa_{2}\left(P_{m}\right): \gamma(\kappa_{2}\left(P_{m}\right),\ell) \chi_{P_{m}}
		>C_\gamma\}.
	\end{aligned}
	$$
	For every $j>\kappa_2(P_{m})$, we denote
	$$P_{m+1,j}:=\{\tau_{P_{m}}=j\}$$
	and	$$\kappa_{1}\left(P_{m+1,j}\right):=\kappa_{2}\left(P_{m}\right),\quad \kappa_{2}\left(P_{m+1,j}\right):=j.$$
Let $\mathcal{P}(P_m)=\left\{P_{m+1, j}:~j> \kappa_2(P_{m})\right\}$ and we call it the collection of the principal sets generated by $P_m.$ Moreover, letting $\mathcal{P}_{m+1}=\bigcup_{P_m \in \mathcal{P}_{m}} \mathcal{P}(P_m),$
we call $\mathcal{P}_{m+1}$ the $(m+1)\hbox{-th}$ generation of principal sets.
	
Define the collection of principal sets $\mathcal{P}$ by
	$$
	\mathcal{P}:=\bigcup_{m=1}^{\infty} \mathcal{P}_{m}.
	$$
Then, by \cite[Proposition 4.1]{ChenQuanJiaoWu}, we find the following properties of principal sets.

\begin{lemma}\label{prop-principal-set} For the principal sets introduced above, we have
\begin{enumerate}[\rm (a)]
\item \label{Pro-1} The sets $E(P):=P\cap\{\tau_{P}=\infty\}$, where $P\in \mathcal P$, are disjoint.
\smallskip

\item  \label{Pro-2} For every $P\in\mathcal P$, $P\in \mathcal F_{\kappa_2(P)}$.
\smallskip

\item  \label{Pro-3} For  every $P\in \bigcup_{m=2}^{\infty} \mathcal{P}_{m}$,
$$\Big(\sum_{\kappa_1(P)<i\leq \kappa_2(P)-1}\|\widehat W_{\kappa_1(P)}^{-1}d_i(W^{-\frac1p}f)\|_{\mathbb C^d}^2\Big)^{\frac12}\leq C_\gamma\mathbb E_{\kappa_1(P)}\|\widehat W_{\kappa_1(P)}^{-1}W^{-\frac1p}f\|_{\mathbb C^d}\,\,\, \mbox{on}\,\,\,P$$
and
$$\sup_{\kappa_1(P)<i\leq \kappa_2(P)-1}\|\widehat W_{\kappa_1(P)}^{-1} \mathbb E_{i}(W^{-\frac{1}{p}}f)\|_{\mathbb C^d}\leq C_\gamma\mathbb E_{\kappa_1(P)}\|\widehat  W_{\kappa_1(P)}^{-1}W^{-\frac{1}{p}}f\|_{\mathbb C^d}\,\,\, \mbox{on}\,\,\, P.$$

\item  \label{Pro-4} For every $P\in \bigcup_{m=2}^{\infty} \mathcal{P}_{m}$, $$\Big(\sum_{i>\kappa_2(P)}\|\widehat W_{\kappa_2(P)}^{-1}d_i(W^{-\frac1p}f)\|_{\mathbb C^d}^2\Big)^{\frac12}\leq C_\gamma\mathbb E_{\kappa_2(P)}\|\widehat W_{\kappa_2(P)}^{-1}W^{-\frac1p}f\|_{\mathbb C^d}\quad \mbox{on}\quad E(P)$$
and
$$\sup_{i> \kappa_2(P)}\|\widehat  W_{\kappa_2(P)}^{-1} \mathbb E_{i}(W^{-1/p}f)\|_{\mathbb C^d}\leq C_\gamma\mathbb E_{\kappa_2(P)}\|\widehat  W_{\kappa_2(P)}^{-1}W^{-\frac{1}{p}}f\|_{\mathbb C^d}\quad \mbox{on}\quad E(P).$$

\smallskip

\item  \label{Pro-5} For  every $P\in\bigcup_{m=2}^\infty\mathcal P_m$, we have
    $$\mathbb P(P)\leq 2\mathbb P(E(P))$$ and $$\chi_P\leq 2\mathbb E_{\kappa_2(P)}(\chi_{E(P)}).$$
\smallskip

\item \label{Pro-6} $\lim_{N\to \infty} \mathbb P(\bigcup_{P\in{\mathcal P}_N} P)=0.$
	\end{enumerate}
\end{lemma}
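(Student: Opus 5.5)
The plan is to derive each item directly from the stopping-time construction of $\mathcal P$, together with property (ii) of Lemma \ref{lem-two-cond}. Throughout, write $\tau_P$ for the stopping time attached to $P$.

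\textbf{Items (b), (c), (d).} Item (b) follows by induction on the generation. $P_{m+1,j}=\{\tau_{P_m}=j\}$ is contained in $P_m\in\mathcal F_{\kappa_2(P_m)}\subset\mathcal F_j$, and each $\gamma(\kappa_2(P_m),\ell)$ is $\mathcal F_\ell$-measurable by (i). Hence $\tau_{P_m}$ is a stopping time and $\{\tau_{P_m}=j\}\in\mathcal F_j$.

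Item (c) is the minimality of the stopping time. Let $P\in\mathcal P_{m+1}$ be generated by $P_m$, with $n=\kappa_1(P)=\kappa_2(P_m)$. On $P$ we have $P\subset P_m$ and $\tau_{P_m}=\kappa_2(P)$, so $\gamma(n,\ell)\le C_\gamma$ for every $n<\ell<\kappa_2(P)$. Taking $\ell=\kappa_2(P)-1$ in $\gamma_1$ gives the square-function bound, and taking $n<\ell\le\kappa_2(P)-1$ in $\gamma_2$ gives the sup bound. The indicator in the definition of $\gamma$ causes no trouble: if the denominator vanishes then $\widehat W_n^{-1}W^{-1/p}f=0$ a.s.\ on that $\mathcal F_n$-set, so both sides vanish.

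Item (d) is the same argument with $\tau_P=\infty$: on $E(P)$ we get $\gamma(\kappa_2(P),\ell)\le C_\gamma$ for all $\ell>\kappa_2(P)$, and letting $\ell\to\infty$ in $\gamma_1$ gives the full sum.

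\textbf{Item (a).} Note that the sets of one generation are pairwise disjoint, since they are level sets $\{\tau=j\}$ of the stopping time of a common parent, and children of different parents lie inside disjoint parents by induction. Also $P\setminus E(P)=P\cap\{\tau_P<\infty\}=\bigcup_{Q\in\mathcal P(P)}Q$. Now take $P\ne P'$. If neither is a descendant of the other, they lie in disjoint ancestors of the same generation. If $P'$ is a descendant of $P$, then $P'\subset P\cap\{\tau_P<\infty\}$, which is disjoint from $E(P)$.

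\textbf{Item (e).} Apply Lemma \ref{lem-two-cond}(ii) with $n=\kappa_2(P)$ and with $A=P\cap B$ for an arbitrary $B\in\mathcal F_{\kappa_2(P)}$; this is allowed since $P\in\mathcal F_{\kappa_2(P)}$ by (b). On $P$ we have $\{\tau_P=\infty\}=\{\sup_{m>n}\gamma(n,m)\le C_\gamma\}$, so
\[
\mathbb P\big(P\cap B\cap\{\tau_P<\infty\}\big)\le \mathbb P\big(P\cap B\cap E(P)\big),
\]
which gives $\mathbb P(P\cap B)\le 2\mathbb P(B\cap E(P))$. Taking $B=\Omega$ yields the first claim. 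Since $B\in\mathcal F_{\kappa_2(P)}$ is arbitrary, the inequality says $\chi_P\le 2\mathbb E_{\kappa_2(P)}\chi_{E(P)}$ a.s.

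\textbf{Item (f).} I expect this to be the only nonroutine step. The key observation is that each $P\in\mathcal P_{N+1}$ with parent $Q$ lies in $Q\setminus E(Q)$. By (a) the sets $E(Q)$, $Q\in\mathcal P_N$, are disjoint, and with $N\ge2$ item (e) gives $\mathbb P(E(Q))\ge\frac12\mathbb P(Q)$. Hence
\[
\mathbb P\Big(\bigcup_{\mathcal P_{N+1}}P\Big)\le\sum_{Q\in\mathcal P_N}\big(\mathbb P(Q)-\mathbb P(E(Q))\big)\le\frac12\,\mathbb P\Big(\bigcup_{\mathcal P_N}Q\Big).
\]
Iterating gives geometric decay $\le 2^{-N+2}$, which tends to $0$. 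The first generation, where the threshold is $0$ rather than $C_\gamma$, is handled only by the trivial bound $\le1$.
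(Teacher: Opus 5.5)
Your proof is correct, and it matches what the paper relies on: the paper does not reprove this lemma but checks conditions (i)--(ii) of Lemma \ref{lem-two-cond} and imports the properties from \cite[Proposition 4.1]{ChenQuanJiaoWu}, while you derive (a), (b), (e), (f) from (i)--(ii) and the tree/stopping-time structure alone and unpack $\gamma_1,\gamma_2$ only for (c) and (d). One small slip: in (f), the step $\sum_{Q\in\mathcal P_N}\frac12\mathbb P(Q)=\frac12\mathbb P(\bigcup_{Q\in\mathcal P_N}Q)$ needs the sets $Q\in\mathcal P_N$ themselves to be pairwise disjoint, which you proved in (a), not the disjointness of the sets $E(Q)$ that you cite there.
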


\medskip

\section{An $L_p$ estimate for the sparse operator $\mathcal T_{W,2}$}\label{estimate-for-sparse}
The purpose of this section is to introduce the sparse operators $\mathcal T_{W,r}$ and  establish an $L_p$ estimate for $\mathcal T_{W,2}$. Let $W$ be a matrix weight. Fix $n\in\mathbb N$. Consider the principal sets constructed in the preceding section. For $r\geq 1$, we introduce the sparse operators $\mathcal T_{W,r}$ as follows:
\begin{align*}
	&\mathcal T_{W,r}f:=\Big(\sum_{P \in \mathcal{P}}\|W^{\frac{1}{p}} \widehat{W}_{\kappa_{2}(P)}\|^r \Big(\mathbb{E}_{\kappa_{2}(P)}\|\widehat{W}_{\kappa_{2}(P)}^{-1} W^{-\frac1p}f\|_{\mathbb{C}^{d}}\Big)^r\chi_{P}\Big)^{\frac1r}.
\end{align*}
The main result of this section reads as follows.

\begin{proposition}\label{5.4}
Let  $1<p<\infty$.  Given a matrix weight $W\in A_p$, we have
	\begin{equation*}\label{Lem-main}
		\left\|\mathcal T_{W,2}f\right\|_{L_{p}} \lesssim_{p,d}[W]_{A_p}^{\max\{\frac12+\frac1{p(p-1)},\frac1{p-1}\}} \|f\|_{L_p(\Omega;\mathbb C^d)},\quad\forall f\in L_p(\Omega;\mathbb C^d).
	\end{equation*}
\end{proposition}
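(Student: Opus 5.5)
The plan is to exploit the sparseness of the principal sets (Lemma \ref{prop-principal-set}(e)) and to reduce everything to the maximal operator $M'_W$ of Proposition \ref{RMprop}, treating the ranges $1<p\le 2$ and $p>2$ separately. Throughout write $n_P:=\kappa_2(P)$ and $A_P:=\mathbb E_{n_P}\|\widehat W_{n_P}^{-1}W^{-\frac1p}f\|_{\mathbb C^d}$, which is $\mathcal F_{n_P}$-measurable. The first observation is that $A_P\le M'_Wf$ pointwise. The second is that $\chi_P$ is $\mathcal F_{n_P}$-measurable by Lemma \ref{prop-principal-set}(b). The third is that Lemma \ref{equ-defi-ap}, together with $\|B\|=\|B^*\|$, gives $\mathbb E_{n}\|W^{\frac1p}\widehat W_n\|^p\lesssim_{p,d}[W]_{A_p}$. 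The first generation $\mathcal P_1$ is not covered by (e), so it is handled separately. Its members $\{\tau_1=j\}$ are pairwise disjoint, so its contribution is pointwise at most $\sup_j\|W^{\frac1p}\widehat W_j\|A_{P_{1,j}}\chi_{P_{1,j}}$, and this is estimated exactly as below with a single term.

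\emph{Case $1<p\le2$.} Since $p/2\le1$, the inequality $(\sum a_P)^{p/2}\le\sum a_P^{p/2}$ gives
\[
\|\mathcal T_{W,2}f\|_{L_p}^p\le\sum_{P}\int_P\|W^{\frac1p}\widehat W_{n_P}\|^pA_P^p\,d\mathbb P
=\sum_P\int_P\mathbb E_{n_P}\big(\|W^{\frac1p}\widehat W_{n_P}\|^p\big)A_P^p\,d\mathbb P
\lesssim[W]_{A_p}\sum_P A_P^p\,\mathbb P(P).
\]
By (e), $\chi_P\le2\mathbb E_{n_P}\chi_{E(P)}$, so $\sum_P A_P^p\mathbb P(P)\le 2\sum_P\int_{E(P)}A_P^p\le 2\|M'_Wf\|_p^p$, using the disjointness in (a). Proposition \ref{RMprop} then yields $\|\mathcal T_{W,2}f\|_p\lesssim[W]^{\frac1p}[W]^{\frac{1}{p(p-1)}}\|f\|_p=[W]^{\frac1{p-1}}\|f\|_p$, which is the claimed exponent in this range.

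\emph{Case $p>2$.} Here we dualize. Take $q=(p/2)'$ and $0\le g$ with $\|g\|_{L_q}=1$; it suffices to show
\[
\int\sum_P\|W^{\frac1p}\widehat W_{n_P}\|^2A_P^2\chi_Pg\,d\mathbb P\lesssim[W]_{A_p}\|M'_Wf\|_p^2 .
\]
This gives the exponent $\frac12+\frac1{p(p-1)}$ after applying Proposition \ref{RMprop}. Conditioning on $\mathcal F_{n_P}$ reduces the left side to $\sum_P\int_PA_P^2\,\mathbb E_{n_P}(\|W^{\frac1p}\widehat W_{n_P}\|^2g)$. The natural first attempt is the conditional Hölder inequality
\[
\mathbb E_{n_P}(\|W^{\frac1p}\widehat W\|^2g)\le[W]^{2/p}(\mathbb E_{n_P}g^q)^{1/q}.
\]
Combined with sparseness, this bounds the sum by $[W]^{2/p}\int(M'_Wf)^2(Mg^q)^{1/q}$. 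However, the Doob maximal operator is not bounded on $L_1$, so this route fails at the endpoint. This is the main obstacle.

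To circumvent it, I would split the weight instead. I would write
\[
\mathbb E(\|W^{\frac1p}\widehat W\|^2g)\le(\mathbb E\|W^{\frac1p}\widehat W\|^2)^{1/2}(\mathbb E\|W^{\frac1p}\widehat W\|^2g^2)^{1/2},
\]
with the first factor bounded by $[W]^{1/p}$. The scheme is to put one half of the weight into a square-function-type quantity whose $L_{p/2}$ norm is controlled by $[W]^{1/2}$ times an unweighted norm. Concretely, I would use the fact that the $\ell^2$-sum over the nested sparse family of $\mathcal F_{n_P}$-measurable "averages" $\mathbb E_{n_P}(\|W^{\frac1p}\widehat W\|^2 g)\chi_P$ is controlled via the Carleson (sparse) embedding by the $L_q$ norm of a Doob maximal function of $g$ at an exponent strictly below $q$. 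I expect the bookkeeping of where the loss $[W]^{1/2}$ (rather than $[W]^{1/p}$) enters to be the delicate point. That loss is precisely what the target exponent allows.
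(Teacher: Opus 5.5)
Your case $1<p\le 2$ is correct and is the paper's argument: $\mathcal T_{W,2}f\le\mathcal T_{W,p}f$, followed by the computation in Lemma \ref{5.4'}. One notational fix: write $\int_P A_P^p\,d\mathbb P$ rather than $A_P^p\,\mathbb P(P)$, since $A_P$ is a function.

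The case $p>2$, however, is not proved. You correctly reduce it to
\[
\int\sum_P\|W^{\frac1p}\widehat W_{n_P}\|^2A_P^2\chi_P\,g\,d\mathbb P\lesssim[W]_{A_p}\|M'_Wf\|_{L_p}^2,\qquad \|g\|_{L_q}\le 1,\ q=(p/2)'.
\]
You also correctly see why conditional H\"older fails: it leaves Doob's maximal function at the $L_1$ endpoint. But the proposed remedy does not work as stated. The Cauchy--Schwarz split trades $\mathbb E_n(\|W^{\frac1p}\widehat W_n\|^2g)$ for $[W]^{1/p}\big(\mathbb E_n(\|W^{\frac1p}\widehat W_n\|^2g^2)\big)^{1/2}$. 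That is the same kind of maximal operator, now acting on $g^2\in L_{q/2}$, and $q/2=\frac{p}{2(p-2)}<1$ once $p>4$. Placing $g$ at an exponent $q_0<q$ by conditional H\"older would instead require $\mathbb E_n\|W^{\frac1p}\widehat W_n\|^{p+\varepsilon}\lesssim 1$. That is a reverse H\"older inequality, which is unavailable for general filtrations; the paper explicitly avoids it. So the ``Carleson embedding at an exponent strictly below $q$'' and the claimed loss $[W]^{1/2}$ are asserted, not derived, and the key step is missing.

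The missing idea is to avoid dualizing at $(p/2)'$ and instead interpolate between the $\ell^1$ and $\ell^p$ sparse operators.
\begin{itemize}
\item By H\"older in the sum over $P$, $\mathcal T_{W,2}f\le(\mathcal T_{W,1}f)^{1-\theta}(\mathcal T_{W,p}f)^{\theta}$ with $\theta=\frac{p}{2p-2}$. Hence $\|\mathcal T_{W,2}f\|_{L_p}\le\|\mathcal T_{W,1}f\|_{L_p}^{1-\theta}\|\mathcal T_{W,p}f\|_{L_p}^{\theta}$.
\item Your Case 1 computation is valid for every $p$ and gives $\|\mathcal T_{W,p}f\|_{L_p}\lesssim[W]^{1/p}\|M'_Wf\|_{L_p}$.
\item For $\mathcal T_{W,1}$, dualize against $g\in L_{p'}$. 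Sparseness bounds the pairing, up to a constant, by $\int M'_Wf\cdot\sup_n\mathbb E_n(\|W^{\frac1p}\widehat W_n\|g)\,d\mathbb P$.
\item There is now no endpoint problem, because the weight stays inside the maximal function. Write $\|W^{\frac1p}\widehat W_n\|g\le\sum_i\|\widehat W_n\widetilde W_n\|\,\|\widetilde W_n^{-1}W^{\frac1p}ge_i\|_{\mathbb C^d}$ with $\|\widehat W_n\widetilde W_n\|\le[W]^{1/p}$. Also $\widetilde W_n^{-1}W^{\frac1p}=\widehat V_n^{-1}V^{-\frac1{p'}}$ for the dual weight $V=W^{-p'/p}\in A_{p'}$.
\item Proposition \ref{RMprop} for $M'_V$, with Lemma \ref{prop_AP}, then gives $\|\sup_n\mathbb E_n(\|W^{\frac1p}\widehat W_n\|g)\|_{L_{p'}}\lesssim[W]\|g\|_{L_{p'}}$; this is Lemma \ref{another-maximal}. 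Hence $\|\mathcal T_{W,1}f\|_{L_p}\lesssim[W]\|M'_Wf\|_{L_p}$.
\item Interpolation yields $[W]^{(1-\theta)+\theta/p}\|M'_Wf\|_{L_p}=[W]^{1/2}\|M'_Wf\|_{L_p}$, which is exactly your target inequality. Proposition \ref{RMprop} then gives the exponent $\frac12+\frac1{p(p-1)}$.
\end{itemize}
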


In preparation for Proposition \ref{5.4}, we need several lemmas. The first lemma shows that the operator $\mathcal T_{W,p}$ is bounded on $L_p$ for $1<p<\infty$.

\begin{lemma}\label{5.4'}
Let  $1<p<\infty$.  If $W\in A_p$, then
\begin{equation*}\label{Lem-main}
		\left\|\mathcal T_{W,p}f\right\|_{L_p} \lesssim_{p,d}[W]_{A_p}^{\frac 1{p-1}} \|f\|_{L_p(\Omega;\mathbb C^d)},\quad\forall f\in L_p(\Omega;\mathbb C^d).
	\end{equation*}
\end{lemma}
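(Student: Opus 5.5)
We raise $\mathcal T_{W,p}f$ to the $p$-th power and integrate term by term. Since $\chi_P$ appears only once per term, we get
\begin{align*}
\|\mathcal T_{W,p}f\|_{L_p}^p=\sum_{P\in\mathcal P}\int_P \|W^{\frac1p}\widehat W_{\kappa_2(P)}\|^p\Big(\mathbb E_{\kappa_2(P)}\|\widehat W_{\kappa_2(P)}^{-1}W^{-\frac1p}f\|_{\mathbb C^d}\Big)^p d\mathbb P .
\end{align*}
By Lemma \ref{prop-principal-set}(\ref{Pro-2}), $P\in\mathcal F_{\kappa_2(P)}$, and the averaged factor is $\mathcal F_{\kappa_2(P)}$-measurable. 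So we may first replace $\|W^{\frac1p}\widehat W_{\kappa_2(P)}\|^p$ by its conditional expectation $\mathbb E_{\kappa_2(P)}\|W^{\frac1p}\widehat W_{\kappa_2(P)}\|^p$. This equals $\mathbb E_{\kappa_2(P)}\|\widehat W_{\kappa_2(P)}W^{\frac1p}\|^p$, since the operator norm is invariant under adjoints and both matrices are self-adjoint. Lemma \ref{equ-defi-ap} then bounds it by $\lesssim_{p,d}[W]_{A_p}$. This leaves
\begin{align*}
\|\mathcal T_{W,p}f\|_{L_p}^p\lesssim_{p,d}[W]_{A_p}\sum_{P\in\mathcal P}\int_P\Big(\mathbb E_{\kappa_2(P)}\|\widehat W_{\kappa_2(P)}^{-1}W^{-\frac1p}f\|_{\mathbb C^d}\Big)^p d\mathbb P .
\end{align*}

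Next we use sparsity. For $P\in\bigcup_{m\ge2}\mathcal P_m$, Lemma \ref{prop-principal-set}(\ref{Pro-5}) gives $\chi_P\le 2\mathbb E_{\kappa_2(P)}\chi_{E(P)}$. Since the integrand is $\mathcal F_{\kappa_2(P)}$-measurable, each integral over $P$ is at most twice the same integral over $E(P)$. On $E(P)$ the average is bounded by $M'_Wf$, and the sets $E(P)$ are disjoint by Lemma \ref{prop-principal-set}(\ref{Pro-1}). Hence the sum over $P\in\bigcup_{m\ge2}\mathcal P_m$ is at most $2\|M'_Wf\|_{L_p}^p$. The first generation $\mathcal P_1$ consists of the disjoint sets $\{\tau_1=j\}$, so its contribution is trivially bounded by $\|M'_Wf\|_{L_p}^p$ as well. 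Proposition \ref{RMprop} then yields
\begin{align*}
\|\mathcal T_{W,p}f\|_{L_p}^p\lesssim_{p,d}[W]_{A_p}\cdot[W]_{A_p}^{\frac1{p-1}}\|f\|_{L_p}^p=[W]_{A_p}^{\frac{p}{p-1}}\|f\|_{L_p}^p ,
\end{align*}
and taking $p$-th roots gives the exponent $\frac1{p-1}$, as claimed.

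The main point to check carefully is the step that pulls the factor $\|W^{\frac1p}\widehat W_{\kappa_2(P)}\|^p$ inside the conditional expectation. This needs $\chi_P$ and the average to be exactly $\mathcal F_{\kappa_2(P)}$-measurable, which holds by construction. It also needs the adjoint identity together with Lemma \ref{equ-defi-ap} at the index $n=\kappa_2(P)$, which is now a random (stopping) time rather than a fixed integer. This is handled by splitting $P=\bigcup_j P\cap\{\kappa_2(P)=j\}$, where each piece lies in $\mathcal F_j$, and applying the lemma for each fixed $j$.
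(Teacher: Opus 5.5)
Your proof is correct and follows the paper's argument essentially step for step. You move $\|W^{\frac1p}\widehat W_{\kappa_2(P)}\|^p$ under $\mathbb E_{\kappa_2(P)}$ and bound it by $[W]_{A_p}$ via Lemma \ref{equ-defi-ap}. You handle $\mathcal P_1$ by disjointness, and the later generations via $\chi_P\le 2\mathbb E_{\kappa_2(P)}\chi_{E(P)}$ together with the disjointness of the sets $E(P)$, before concluding with Proposition \ref{RMprop}. Your final worry is unnecessary: for $P=P_{m,j}$ the construction sets $\kappa_2(P)=j$, a fixed integer, so Lemma \ref{equ-defi-ap} applies directly, although your splitting argument is harmless.
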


\begin{proof}
By Property \eqref{Pro-2} in Lemma \ref{prop-principal-set}, we have $P\in\mathcal F_{\kappa_2(P)}$ for every $P\in\mathcal P$. Therefore,
	\begin{align*}\left\| \mathcal T_{W,p}f\right\|_{L_p}^p &=\sum_{P \in \mathcal{P}} \int_{\Omega}\mathbb E_{\kappa_2(P)}\big(\big\|W^{\frac{1}{p}} \widehat{W}_{\kappa_{2}\left(P\right)}\big\|^p\big) \Big(\mathbb{E}_{\kappa_{2}\left(P\right)}\big\|\widehat{W}_{\kappa_{2}\left(P\right)}^{-1} W^{-\frac1p}f\big\|_{\mathbb{C}^{d}}\Big)^p\chi_{P}d\mathbb P.
\end{align*}
Since $W\in A_p$, it follows from \eqref{equi-chara} that
\begin{align*}
\left\| \mathcal T_{W,p}f\right\|_{L_p}^p &\lesssim_{p,d}[W]_{A_p}\sum_{P \in \mathcal{P}} \int_{\Omega}\Big(\mathbb{E}_{\kappa_{2}\left(P\right)}\big\|\widehat{W}_{\kappa_{2}\left(P\right)}^{-1} W^{-\frac1p}f\big\|_{\mathbb{C}^{d}}\Big)^p \chi_{P}d\mathbb P\\
		&=[W]_{A_p}\sum_{i=2}^\infty\sum_{P \in \mathcal{P}_i} \int_{\Omega}\Big(\mathbb{E}_{\kappa_{2}\left(P\right)}\big\|\widehat{W}_{\kappa_{2}\left(P\right)}^{-1} W^{-\frac1p}f\big\|_{\mathbb{C}^{d}}\Big)^p \chi_{P}d\mathbb P\\&\quad+[W]_{A_p}\sum_{P \in \mathcal{P}_1} \int_{\Omega}\Big(\mathbb{E}_{\kappa_{2}\left(P\right)}\big\|\widehat{W}_{\kappa_{2}\left(P\right)}^{-1} W^{-\frac1p}f\big\|_{\mathbb{C}^{d}}\Big)^p \chi_{P}d\mathbb P=: \mbox{I}+\mbox{II}.
\end{align*}
Note that principal sets from the same generation are disjoint. Thus,
\begin{align*}
\mbox{II}\leq [W]_{A_p} \|M_{W}'f\|_{L_p}^p.
\end{align*}
One the other hand, by Property \eqref{Pro-5}  in Lemma \ref{prop-principal-set}, we get
\begin{align*}
\mbox{I} &\lesssim [W]_{A_p}\sum_{i=2}^\infty\sum_{P \in \mathcal{P}_i} \int_{\Omega} \Big(\mathbb{E}_{\kappa_{2}\left(P\right)}\big\|\widehat{W}_{\kappa_{2}\left(P\right)}^{-1} W^{-\frac1p}f\big\|_{\mathbb{C}^{d}}\Big)^p \mathbb E_{\kappa_{2}(P)}(\chi_{E(P)})d\mathbb P\\
&=[W]_{A_p}\sum_{i=2}^\infty\sum_{P \in \mathcal{P}_i} \int_{\Omega} \Big(\mathbb{E}_{\kappa_{2}\left(P\right)}\big\|\widehat{W}_{\kappa_{2}\left(P\right)}^{-1} W^{-\frac1p}f\big\|_{\mathbb{C}^{d}}\Big)^p \chi_{E(P)} d\mathbb P.
\end{align*}
Hence, it follows from  Property \eqref{Pro-1} in Lemma \ref{prop-principal-set} that
\begin{align*}
\mbox{I} &\lesssim [W]_{A_p} \|M_{W}'f\|_{L_p}^p
\end{align*}
Putting the above estimates together and then applying Proposition	\ref{RMprop}, we conclude that
\begin{align*}
\left\| \mathcal T_{W,p}f\right\|_{L_p}^p &\lesssim_{p,d}[W]_{A_p}\|M_{W}'f\|_{L_p}^p \leq [W]_{A_p}^{\frac{p}{p-1}} \|f\|_{L_p(\Omega;\mathbb C^d)}^p,
\end{align*}
which is the desired result.
\end{proof}

The lemma below could be viewed as one type of weighted maximal inequality.

\begin{lemma}\label{another-maximal}
Let $1<p<\infty$. Given a matrix weight $W\in A_p$, we have
\begin{equation}\label{something-remain-2}
		\int_{\Omega}\sup_n \left[ \mathbb E_{n}\left(\big\|W^{\frac{1}{p}} \widehat{W}_{n}\big\|g\right)\right]^{p'}d\mathbb P\lesssim_{p,d}[W]_{A_p}^{p'}\|g\|_{L_{p'}}^{p'},\quad \forall\, 0\leq g\in L_{p'}.
	\end{equation}
\end{lemma}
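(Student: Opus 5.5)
We reduce \eqref{something-remain-2} to the weighted Doob maximal inequality for the scalar weight $u:=\|W^{\frac1p}\widehat W_n\|$ and its dual, using Hölder with exponents chosen so that only the $A_p$ characteristic appears. Fix $n$ and $0\le g\in L_{p'}$. Since $\widehat W_n$ is $\mathcal F_n$-measurable, for every unit vector $e$ we have $\|W^{\frac1p}\widehat W_n e\|\le \|W^{\frac1p}\widehat W_n\|$. By Lemma \ref{equ-defi-ap} we know $\mathbb E_n(\|\widehat W_n W^{\frac1p}\|^p)\lesssim [W]_{A_p}$. The norm of a matrix equals that of its adjoint, and both $W^{1/p}$ and $\widehat W_n$ are self-adjoint (we may take the reducing operators positive). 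Hence $\mathbb E_n(\|W^{\frac1p}\widehat W_n\|^p)\lesssim_{p,d}[W]_{A_p}$ uniformly in $n$.

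The naive Hölder bound $\mathbb E_n(\|W^{\frac1p}\widehat W_n\| g)\le (\mathbb E_n\|W^{\frac1p}\widehat W_n\|^p)^{1/p}(\mathbb E_n g^{p'})^{1/p'}$ only gives $[W]_{A_p}^{1/p}M(g^{p'})^{1/p'}$, and $M(g^{p'})$ is not bounded on $L_1$. So we need a little room. We would first try a reverse-Hölder-free trick: pick $1<s<p'$ and use Hölder with exponents $s', s$ in the form
$$\mathbb E_n(\|W^{\frac1p}\widehat W_n\| g)\le \big(\mathbb E_n\|W^{\frac1p}\widehat W_n\|^{s'}\big)^{1/s'}\big(\mathbb E_n g^{s}\big)^{1/s}.$$
This requires $s'\le p$, i.e. $s\ge p'$, which contradicts $s<p'$. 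So plain Hölder fails, and this is the main obstacle.

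Instead we would dualize and factor through Proposition \ref{RMprop}. Write $\|W^{\frac1p}\widehat W_n\|\approx_d\sum_{j}\|W^{\frac1p}\widehat W_n e_j\|$. Choose an $\mathcal F_n$-measurable direction to linearize. Then apply the scalar argument for each fixed $e$: $\mathbb E_n(\|W^{\frac1p}\widehat W_n e\|g)$ is a Doob-type average of $g$ against the scalar weight $v_e=\|W^{\frac1p}\widehat W_n e\|$. The cleanest route is duality: $\|\sup_n\mathbb E_n(\|W^{\frac1p}\widehat W_n\|g)\|_{L_{p'}}$ is controlled by testing against a linearized stopping $n(\omega)$ and $h\in L_p$, which gives
$$\int \|W^{\frac1p}\widehat W_{n(\cdot)}\|\, g\, \mathbb E_{n}(h)\,d\mathbb P.$$
One then uses $\|W^{\frac1p}\widehat W_n\|\le \|W^{\frac1p}\widetilde W_n^{-1}\|\,\|\widetilde W_n\widehat W_n\|\le[W]_{A_p}^{1/p}\|W^{\frac1p}\widetilde W_n^{-1}\|$. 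Combined with \eqref{w-1-1}, this shows that $\|W^{\frac1p}\widetilde W_n^{-1}\|$ has conditional $L_p$ norm $O(1)$. This reduces the estimate to a maximal inequality of the form $\sup_n\mathbb E_n(\phi_n g)$ with $\mathbb E_n\phi_n^p\lesssim1$, at the cost of the factor $[W]_{A_p}^{1/p}$.

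The remaining power $[W]_{A_p}^{p'-p'/p}=[W]_{A_p}^{p'-1}$ must come from that maximal operator. We would obtain it by passing to the dual weight $V=W^{-p'/p}$ via Lemma \ref{prop_AP}, where $[V]_{A_{p'}}\approx[W]_{A_p}^{p'-1}$. The estimate is then recast as an instance of Proposition \ref{RMprop} for $V$ with exponent $p'$: there $\widehat V_n=\widetilde W_n$ and $V^{-1/p'}=W^{1/p}$, so $M'_V$ involves exactly $\|\widetilde W_n^{-1}W^{\frac1p}(\cdot)\|$, which yields $[V]_{A_{p'}}^{1/(p'(p'-1))}=[W]_{A_p}^{1/p'}$ in $L_{p'}$ norm. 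Tracking exponents, we aim for a total of $[W]_{A_p}^{1/p}\cdot[W]_{A_p}^{1/p'}=[W]_{A_p}$ for the $L_{p'}$ norm, i.e. $[W]_{A_p}^{p'}$ after raising to $p'$. The key technical step is to justify dominating the scalar $g$-average by $M'_V$ applied to a suitable vector function such as $g\,e$ (with a linearized unit vector). We expect this domination to be the hard part.
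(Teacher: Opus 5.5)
There is a gap, and it sits exactly at the step you flag. Your architecture matches the paper's proof: pull out $\|\widetilde W_n\widehat W_n\|\le[W]_{A_p}^{1/p}$, pass to the dual weight $V=W^{-p'/p}$ (for which $\widehat V_n=\widetilde W_n$ and $V^{-1/p'}=W^{1/p}$), and invoke Proposition~\ref{RMprop} for $(V,p')$. Your final count $[W]_{A_p}^{1/p}\cdot[W]_{A_p}^{1/p'}$ at the level of $L_{p'}$ norms is also right. The intermediate identity ``$p'-p'/p=p'-1$'' is a slip, since $p'-p'/p=1$.

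However, the step you defer as ``the hard part'' is the whole content of the lemma, and the tools you suggest for it would not work. A linearizing unit vector realizing $\|\widetilde W_n^{-1}W^{1/p}\|$ depends on $n$ (and on $\omega$ through $W$). So there is no single $n$-independent function $ge$ to which $M'_V$ could be applied. Reducing to $\sup_n\mathbb E_n(\phi_n g)$ with only $\mathbb E_n\phi_n^p\lesssim 1$ (from \eqref{w-1-1}) discards the structure you need, because that class is not bounded on $L_{p'}$ in general. For instance, $\phi_n=g^{p'-1}(\mathbb E_n g^{p'})^{-1/p}$ satisfies the constraint and produces $(\mathbb E_n g^{p'})^{1/p'}$. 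The duality/stopping-time linearization is also unnecessary.

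The missing idea is a fixed orthonormal basis $(e_i)$ with $W^{1/p}$, not $\widehat W_n$, adjacent to the basis vector. Your decomposition $\sum_j\|W^{1/p}\widehat W_n e_j\|$ has the factors in the wrong order, since there $W^{1/p}$ acts on the $n$-dependent vector $\widehat W_n e_j$. Taking adjoints first, as the paper does, gives
\[
\|W^{\frac1p}\widehat W_n\|g=\|\widehat W_nW^{\frac1p}\|g\le\sum_{i=1}^d\|\widehat W_n\widetilde W_n\|\,\|\widetilde W_n^{-1}W^{\frac1p}(ge_i)\|_{\mathbb C^d}\le[W]_{A_p}^{\frac1p}\sum_{i=1}^d\|\widehat V_n^{-1}V^{-\frac1{p'}}(ge_i)\|_{\mathbb C^d}.
\]
Hence $\sup_n\mathbb E_n(\|W^{1/p}\widehat W_n\|g)\le[W]_{A_p}^{1/p}\sum_i M'_V(ge_i)$ pointwise, and each $ge_i$ is independent of $n$. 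Proposition~\ref{RMprop} for $(V,p')$ together with Lemma~\ref{prop_AP} gives $\|M'_V(ge_i)\|_{L_{p'}}^{p'}\lesssim[V]_{A_{p'}}^{1/(p'-1)}\|g\|_{L_{p'}}^{p'}\approx[W]_{A_p}\|g\|_{L_{p'}}^{p'}$. Multiplying by $[W]_{A_p}^{p'/p}$ yields $[W]_{A_p}^{p'}$. With this one-line domination inserted, your sketch becomes the paper's proof.
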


\begin{proof}
Let $(e_1,\cdots,e_d)$ be an orthonormal basis of $\mathbb C^d.$ Clearly, for  $0\leq g\in L_{p'}$, we have
	$$\big\|W^{\frac{1}{p}} \widehat{W}_{n}\big\|g=\big\| \widehat{W}_{n}W^{\frac{1}{p}}\big\|g  \leq \sum_{i=1}^d\big\| \widehat{W}_{n}W^{\frac{1}{p}}ge_i\big\|_{\mathbb C^d}.$$
	Hence, it follows from the $A_p$ condition that
	\begin{equation}\label{something-remain}
\begin{split}
&\int_{\Omega}\sup_n \left( \mathbb E_{n}\left(\big\|W^{\frac{1}{p}} \widehat{W}_{n}\big\|g\right)\right)^{p'}d\mathbb P\\&\lesssim_d \sum_{i=1}^d\int_{\Omega}\sup_n\left(\mathbb E_n\big\| \widehat{W}_{n}\widetilde{W}_{n}\widetilde{W}_{n}^{-1}W^{\frac{1}{p}}ge_i\big\|_{\mathbb C^d}\right)^{p'}d\mathbb P\\
	 &\leq[W]_{A_p}^{\frac1{p-1}}\sum_{i=1}^d\int_{\Omega}\sup_n\left(\mathbb E_n\big\| \widetilde{W}^{-1}_{n}W^{\frac{1}{p}}ge_i\big\|_{\mathbb C^d}\right)^{p'}d\mathbb P.
		\end{split}
	\end{equation}
Now, denote by $V$ the dual weight of $W$ (i.e., $V=W^{-\frac{p'}{p}}$). By Lemma \ref{prop_AP}, $V\in A_{p'}$ and $[V]_{A_{p'}}\approx_{d,p}[W]_{A_p}^{\frac{p'}{p}}$. Moreover, $$\widehat V_n=\widetilde W_n,\quad \quad \widetilde V_n=\widehat W_n. $$
		Therefore,  for each $i\in\{1,2,\ldots d\}$, we deduce that
		\begin{align*}\int_{\Omega}\sup_n\left(\mathbb E_n\big\| \widetilde{W}^{-1}_{n}W^{\frac{1}{p}}ge_i\big\|_{\mathbb C^d}\right)^{p'}d\mathbb P
			&= \int_{\Omega}\sup_n\left(\mathbb E_n\big\|\widehat{V}^{-1}_{n}V^{-\frac1{p'}}ge_i\big\|_{\mathbb C^d}\right)^{p'}d\mathbb P\\
			&=\|M'_{V}(ge_i)\|_{L_{p'}}^{p'}.\end{align*}
		Applying Proposition \ref{RMprop} (for $p'$, $V$
		and $ge_i$), we obtain
		\begin{align*}
			\int_{\Omega}\sup_n\left(\mathbb E_n\big\| \widetilde{W}^{-1}_{n}W^{\frac{1}{p}}ge_i\big\|_{\mathbb C^d}\right)^{p'}d\mathbb P
			&{\lesssim_{p,d}}[V]_{A_{p'}}^{\frac{1}{p' -1}}\|ge_i\|_{L_{p'}(\Omega;\mathbb C^d)}^{p'}=[W]_{A_p}\|g\|_{L_{p'}}^{p'}.
		\end{align*}
		Substituting this into \eqref{something-remain}, we obtain the desired estimate \eqref{something-remain-2}.
\end{proof}

By Lemma \ref{another-maximal} and duality, one can establish the following  $L_p$ estimate for $\mathcal T_{W,1}$.

\begin{lemma}\label{5.4''}
Let $1<p<\infty$. Given a matrix weight $W\in A_p$, we have
\begin{equation*}\label{Lem-main}
\left\|\mathcal T_{W,1}f\right\|_{L_{p}} \lesssim_{p,d}[W]_{A_p}^{1+\frac{1}{p(p-1)}} \|f\|_{L_p(\Omega;\mathbb C^d)},\quad\forall f\in L_p(\Omega;\mathbb C^d).
\end{equation*}
\end{lemma}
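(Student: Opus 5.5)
We argue by duality. Fix $f\in L_p(\Omega;\mathbb C^d)$ and $0\le g\in L_{p'}$ with $\|g\|_{L_{p'}}\le1$. It suffices to bound $\int_\Omega \mathcal T_{W,1}f\cdot g\,d\mathbb P$. Since each $P\in\mathcal P$ is $\mathcal F_{\kappa_2(P)}$-measurable (Property \eqref{Pro-2} in Lemma \ref{prop-principal-set}), and the factor $\mathbb E_{\kappa_2(P)}\|\widehat W_{\kappa_2(P)}^{-1}W^{-\frac1p}f\|$ is also $\mathcal F_{\kappa_2(P)}$-measurable, we can condition on $\mathcal F_{\kappa_2(P)}$ term by term. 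This gives
\begin{align*}
\int_\Omega \mathcal T_{W,1}f\cdot g\,d\mathbb P=\sum_{P\in\mathcal P}\int_P \mathbb E_{\kappa_2(P)}\big(\|W^{\frac1p}\widehat W_{\kappa_2(P)}\|g\big)\,\mathbb E_{\kappa_2(P)}\|\widehat W_{\kappa_2(P)}^{-1}W^{-\frac1p}f\|_{\mathbb C^d}\,d\mathbb P .
\end{align*}

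Write $\mathcal M g:=\sup_n\mathbb E_n(\|W^{\frac1p}\widehat W_n\|g)$. Each summand is at most $\int_P \mathcal M g\cdot M'_Wf\,d\mathbb P$, but we cannot sum over $P$ directly, since the sets overlap. Instead we use sparseness as in the proof of Lemma \ref{5.4'}.

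\textbf{First generation.} The sets in $\mathcal P_1$ are pairwise disjoint, so their contribution is at most $\int_\Omega \mathcal Mg\cdot M'_Wf\,d\mathbb P$.

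\textbf{Generations $m\ge2$.} Property \eqref{Pro-5} gives $\chi_P\le 2\mathbb E_{\kappa_2(P)}(\chi_{E(P)})$. The integrand, apart from $\chi_P$, is $\mathcal F_{\kappa_2(P)}$-measurable, so we may replace $\chi_P$ by $2\chi_{E(P)}$ inside the integral. We then bound each conditional average pointwise by $\mathcal Mg$ and $M'_Wf$ respectively. By Property \eqref{Pro-1}, the sets $E(P)$ are disjoint, so this part is at most $2\int_\Omega \mathcal Mg\cdot M'_Wf\,d\mathbb P$.

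\textbf{Combining.} Hölder's inequality gives
\begin{align*}
\int_\Omega \mathcal T_{W,1}f\cdot g\,d\mathbb P\lesssim \|M'_Wf\|_{L_p}\,\|\mathcal Mg\|_{L_{p'}}.
\end{align*}
Proposition \ref{RMprop} yields $\|M'_Wf\|_{L_p}\lesssim [W]_{A_p}^{\frac1{p(p-1)}}\|f\|_{L_p}$. Lemma \ref{another-maximal}, taking the $p'$-th root, yields $\|\mathcal Mg\|_{L_{p'}}\lesssim [W]_{A_p}\|g\|_{L_{p'}}$. Multiplying gives the exponent $1+\frac1{p(p-1)}$. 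Taking the supremum over $g$ finishes the proof.

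The only delicate point is the sparse step: $\chi_P$ may be replaced by $\mathbb E_{\kappa_2(P)}\chi_{E(P)}$ only because both conditional averages, and hence the full integrand apart from $\chi_P$, are $\mathcal F_{\kappa_2(P)}$-measurable. The first-generation sets must be handled separately, since Property \eqref{Pro-5} is stated only for $m\ge2$. Both points are handled as in Lemma \ref{5.4'}.
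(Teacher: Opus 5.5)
Your proposal is correct and follows the paper's proof essentially step by step. The common route is duality, conditioning each term on $\mathcal F_{\kappa_2(P)}$, using disjointness directly for $\mathcal P_1$, and using Property \eqref{Pro-5} with the disjointness of the $E(P)$ for later generations, which reduces everything to $\int_\Omega M'_Wf\cdot\sup_n\mathbb E_n(\|W^{\frac1p}\widehat W_n\|g)\,d\mathbb P$; then Hölder, Proposition \ref{RMprop} and Lemma \ref{another-maximal} give the exponent $1+\frac1{p(p-1)}$.
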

\begin{proof}
By duality and Property \eqref{Pro-2}, we have
	\begin{align*}
	&\left\| \mathcal T_{W,1}f\right\|_{L_{p}}\\
		&= \sup_{\substack{ g \footnotesize{\mbox{ is positive}}\\\|g\|_{L_{p'}}\leq 1}}\int_{\Omega}\sum_{P \in \mathcal{P}}\big\|W^{\frac{1}{p}} \widehat{W}_{\kappa_{2}\left(P\right)}\big\| \mathbb{E}_{\kappa_{2}\left(P\right)}\Big(\big\|\widehat{W}_{\kappa_{2}\left(P\right)}^{-1} W^{-\frac1p}f\big\|_{\mathbb{C}^{d}} \Big)\chi_{P} gd\mathbb P\\
		&=\sup_{\substack{ g \footnotesize{\mbox{ is positive}}\\\|g\|_{L_{p'}}\leq 1}}\sum_{P \in \mathcal{P}}\int_{\Omega}\mathbb E_{\kappa_2(P)}\Big(\big\|W^{\frac{1}{p}} \widehat{W}_{\kappa_{2}\left(P\right)}\big\|g\Big) \mathbb{E}_{\kappa_{2}\left(P\right)}\Big(\big\|\widehat{W}_{\kappa_{2}\left(P\right)}^{-1} W^{-\frac1p}f\big\|_{\mathbb{C}^{d}} \chi_{P} \Big)d\mathbb P\\
&\leq \sup_{\substack{g \footnotesize{\mbox{ is positive}}\\\|g\|_{L_{p'}}\leq 1}}\Big[\sum_{P\in\mathcal P_{1}}\int_{P} \mathbb{E}_{\kappa_{2}\left(P\right)}\Big(\big\|\widehat{W}_{\kappa_{2}\left(P\right)}^{-1}W^{-\frac1p} f\big\|_{\mathbb{C}^{d}}\Big) \mathbb E_{\kappa_2(P)}\left(\big\|W^{\frac{1}{p}} \widehat{W}_{\kappa_{2}\left(P\right)}\big\|g\right)d\mathbb P\\
		&\qquad +\sum_{P\in\bigcup_{i=2}^\infty\mathcal P_{i}}\int_{P} \mathbb{E}_{\kappa_{2}\left(P\right)}\Big(\big\|\widehat{W}_{\kappa_{2}\left(P\right)}^{-1}W^{-\frac1p} f\big\|_{\mathbb{C}^{d}}\Big) \mathbb E_{\kappa_2(P)}\left(\big\|W^{\frac{1}{p}} \widehat{W}_{\kappa_{2}\left(P\right)}\big\|g\right)d\mathbb P\Big].
	\end{align*}
According to Property \eqref{Pro-5}, $\chi_P\leq 2\mathbb E_{\kappa_2(P)}(\chi_{E(P)})$ for  every $P\in\bigcup_{m=2}^\infty\mathcal P_m$. Therefore,
\begin{align*}
&\left\| \mathcal T_{W,1}f\right\|_{L_{p}}\\
		&\lesssim \sup_{\substack{g \footnotesize{\mbox{ is positive}}\\\|g\|_{L_{p'}}\leq 1}}\Big[\sum_{P\in\mathcal P_{1}}\int_{P} \mathbb{E}_{\kappa_{2}\left(P\right)}\Big(\big\|\widehat{W}_{\kappa_{2}\left(P\right)}^{-1}W^{-\frac1p} f\big\|_{\mathbb{C}^{d}}\Big) \mathbb E_{\kappa_2(P)}\left(\big\|W^{\frac{1}{p}} \widehat{W}_{\kappa_{2}\left(P\right)}\big\|g\right)d\mathbb P\\
		&\qquad +\sum_{P\in\bigcup_{i=2}^\infty\mathcal P_{i}}\int_{E(P)} \mathbb{E}_{\kappa_{2}\left(P\right)}\Big(\big\|\widehat{W}_{\kappa_{2}\left(P\right)}^{-1}W^{-\frac1p} f\big\|_{\mathbb{C}^{d}}\Big) \mathbb E_{\kappa_2(P)}\left(\big\|W^{\frac{1}{p}} \widehat{W}_{\kappa_{2}\left(P\right)}\big\|g\right)d\mathbb P\Big].
	\end{align*}
In view of Property \eqref{Pro-1} and H\"older's inequality, we obtain
	\begin{align*}
		\left\| \mathcal T_{W,1}f\right\|_{L_{p}}
		&\lesssim  \sup_{\substack{g \footnotesize{\mbox{ is positive}}\\\|g\|_{L_{p'}}\leq 1}}\int_{\Omega}\sup_n \mathbb{E}_{n}\Big(\big\|\widehat{W}_n^{-1} W^{-\frac1p}f\big\|_{\mathbb{C}^{d}}\Big) \mathbb E_{n}\left(\big\|W^{\frac{1}{p}} \widehat{W}_{n}\big\|g\right)d\mathbb P
		\\
		&\leq \|M_W'f\|_{L_p}\sup_{\substack{ g \footnotesize{\mbox{ is positive}}\\\|g\|_{L_{p'}}\leq 1}}\left(\int_{\Omega}\sup_n \left[\mathbb E_{n}\left(\big\|W^{\frac{1}{p}} \widehat{W}_{n}\big\|g\right)\right]^{p'}d\mathbb P\right)^{\frac{1}{p'}},
	\end{align*}
Now, applying Proposition \ref{RMprop} and Lemma \ref{another-maximal}, we get
	\begin{equation*}
		\left\| \mathcal T_{W,1}f\right\|_{L_{p}}\lesssim_{p,d}[W]_{A_p}^{\frac1{p(p-1)}+1}\|f\|_{L_p(\Omega;\mathbb C^d)}.
	\end{equation*}
The assertion is verified.
\end{proof}

We are now ready to show Proposition \ref{5.4}.

\begin{proof}[Proof of Proposition \ref{5.4}] The proof shall be divided into two cases: $1<p\leq 2$ and $2<p<\infty$. The case $1<p\leq 2$ is rather easy. Indeed, since $\ell_p\subset \ell_2$, it follows that
\begin{align*}
\mathcal T_{W,2} f \leq  \mathcal T_{W,p}f.
	\end{align*}
Thus, by Lemma \ref{5.4'}, we get the desired result.
	
The case $2<p<\infty$ is more involved.  Set $\theta=\frac{p}{2p-2}$. Then $\frac12=(1-\theta)+\frac{\theta}{p}$. By H\"older's inequality, we obtain
	\begin{align*}
			\mathcal T_{W,2}f\leq (\mathcal T_{W,1}f)^{1-\theta}(\mathcal T_{W,p}f)^{\theta}. 		\end{align*}
Using H\"older's inequality again, we get
		\begin{align}\label{es-Tw2}
			\mathcal \|T_{W,2}f\|_{L_p}&\leq\|\mathcal T_{W,1}f\|_{L_p}^{1-\theta}\|\mathcal T_{W,p}f\|_{L_p}^{\theta}.
		\end{align}
Then, it follows from  Lemma \ref{5.4''} and  Lemma \ref{5.4'} that
\begin{align*}
			\|\mathcal T_{W,2}f\|_{L_p} &\lesssim_{d,p}\Big([W]_{A_p}^{1+\frac1{p(p-1)}}\|f\|_{L_p(\Omega;\mathbb C^d)}\Big)^{1-\theta}\Big([W]_{A_p}^{\frac 1{p-1}}\|f\|_{L_p(\Omega;\mathbb C^d)}\Big)^{\theta}\\
&=[W]_{A_p}^{\frac12+\frac1{p(p-1)}}\|f\|_{L_p(\Omega;\mathbb C^d)}.
\end{align*}
The proof is complete.
\end{proof}

\medskip

\section{Sparse domination of $S_W$ and Proof of Theorem \ref{main2}}\label{Sec-Burk}
The goal of this section is to establish the following sparse domination of $S_W$. This result, together with Proposition \ref{5.4}, immediately implies Theorem \ref{main2}.  From now on, we assume that $n=0$ and consider the associated principal sets and sparse operators (which will be still denoted by $\mathcal T_{W,r}$).

\begin{proposition}\label{Sdominition} Let $\mathcal T_{W,2}$ be defined as above. We have
$$S_W f \lesssim_{p,d} \mathcal T_{W,2} f,\quad\forall f\in L_1(\Omega;\mathbb C^d).$$
\end{proposition}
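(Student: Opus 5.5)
Write $g:=W^{-\frac1p}f$, so that $S_Wf=\big(\sum_k\|W^{\frac1p}d_kg\|_{\mathbb C^d}^2\big)^{1/2}$. With $n=0$, every index $k\geq1$ satisfies $k>\kappa_1(P)$ for the first-generation sets. The plan is to assign each difference $d_kg$, at a.e. point $x$, to the unique principal set $P$ on which $x$ sits in the "time window" $(\kappa_1(P),\kappa_2(P)]$. Concretely, for a.e. $x$ we follow the chain $P_1\supset P_2\supset\cdots$ of principal sets containing $x$. By Lemma \ref{prop-principal-set}\eqref{Pro-6}, this chain is finite a.e., ending at some $P_N$ with $x\in E(P_N)$; Property \eqref{Pro-1} makes the chain well defined. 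The index set $\mathbb N$ then splits as
\[
\textstyle\bigcup_{m\leq N}(\kappa_1(P_m),\kappa_2(P_m)] \;\cup\; (\kappa_2(P_N),\infty).
\]
Hence
\[
S_Wf(x)^2=\sum_{m\le N}\;\sum_{\kappa_1(P_m)<k\le\kappa_2(P_m)}\|W^{\frac1p}d_kg\|^2+\sum_{k>\kappa_2(P_N)}\|W^{\frac1p}d_kg\|^2 .
\]
Since $\sum_m\chi_{P_m}(x)$ runs over exactly the principal sets containing $x$, it suffices to bound each block by a constant times the corresponding term in $\mathcal T_{W,2}f(x)^2$.

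\textbf{Tail block (on $E(P_N)$).} Factor $\|W^{\frac1p}d_kg\|\le\|W^{\frac1p}\widehat W_{\kappa_2(P_N)}\|\,\|\widehat W_{\kappa_2(P_N)}^{-1}d_kg\|$. Property \eqref{Pro-4} then bounds the block by
\[
C_\gamma^2\,\|W^{\frac1p}\widehat W_{\kappa_2(P_N)}\|^2\big(\mathbb E_{\kappa_2(P_N)}\|\widehat W_{\kappa_2(P_N)}^{-1}g\|\big)^2,
\]
which is exactly the $P_N$ term of $\mathcal T_{W,2}f(x)^2$.

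\textbf{Window blocks $(\kappa_1(P_m),\kappa_2(P_m)]$.} Let $P'$ be the parent of $P_m$, so $\kappa_1(P_m)=\kappa_2(P')$, and $P_m\subset P'$. For $k\le\kappa_2(P_m)-1$, use $\widehat W_{\kappa_2(P')}$ in the factorization. Property \eqref{Pro-3} for $P_m$ bounds these $k$ by the $P'$ term. The last index $k=\kappa_2(P_m)$ is handled separately:
\[
\|\widehat W_{\kappa_2(P')}^{-1}d_kg\|\le\|\widehat W_{\kappa_2(P')}^{-1}\mathbb E_kg\|+\|\widehat W_{\kappa_2(P')}^{-1}\mathbb E_{k-1}g\|.
\]
The second summand is controlled by the sup-part of Property \eqref{Pro-3} (or, when $k-1=\kappa_2(P')$, by $\|\widehat W_{\kappa_2(P')}^{-1}\mathbb E_{\kappa_2(P')}g\|\le\mathbb E_{\kappa_2(P')}\|\widehat W_{\kappa_2(P')}^{-1}g\|$). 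The first summand, taken at the jump time, is the obstacle: $\gamma_2(\kappa_2(P'),k)$ has exceeded $C_\gamma$ there, so it is not bounded by the parent average. Instead I would estimate it through $P_m$'s own scale:
\[
\|\widehat W_{\kappa_2(P')}^{-1}\mathbb E_kg\|\;\text{is compared with}\;\|W^{\frac1p}\widehat W_{k}\|\,\|\widehat W_k^{-1}\mathbb E_kg\|,\qquad \|\widehat W_k^{-1}\mathbb E_kg\|\le\mathbb E_k\|\widehat W_k^{-1}g\|\quad(k=\kappa_2(P_m)).
\]
Thus the term $\|W^{\frac1p}\mathbb E_kg\|$ is bounded directly by $\|W^{\frac1p}\widehat W_{\kappa_2(P_m)}\|\,\mathbb E_{\kappa_2(P_m)}\|\widehat W_{\kappa_2(P_m)}^{-1}g\|$, the $P_m$ term. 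This is why $d_k$ at the jump is split into $\mathbb E_k g-\mathbb E_{k-1}g$ before reducing, and each half is charged to a different set. The same splitting handles $k=\kappa_2(P_1)$ for the first generation. For the first generation's window $(0,\kappa_2(P_1)-1]$, the differences vanish by the definition of $\tau_1$, since $\gamma(0,\ell)=0$ for $\ell<\tau_1$ and $\gamma_1$ contains them.

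\textbf{Summation and main obstacle.} Summing over $m$, each $P$ containing $x$ receives at most a bounded number of contributions: from its own tail or jump, and from its children's windows. Only one child contains $x$, since $x$ lies in at most one set of each generation. So
\[
S_Wf(x)^2\lesssim C_\gamma^2\,\mathcal T_{W,2}f(x)^2,
\]
with constants depending on $p,d$ only through $C_\gamma$ and the equivalences in Lemmas \ref{construction w}--\ref{construction w2}. The main obstacle I expect is the jump index $k=\kappa_2(P)$ together with the degenerate points where $\mathbb E_{n}\|\widehat W_n^{-1}g\|=0$. At those points $\gamma$ vanishes by convention, and one must check that $g$'s differences there are zero a.s. This holds because $\widehat W_n^{-1}g=0$ a.s. on that $\mathcal F_n$-set, which forces $\mathbb E_kg=0$ for $k\ge n$.
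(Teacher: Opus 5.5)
Your proposal is correct and is essentially the paper's argument written pointwise. The paper packages your chain decomposition into the recursion for $b_m$ in Lemma \ref{reiteraton}, using the same three ingredients: the tail on $E(P_m)$ via Property (d), the child's window via Property (c), and the jump difference split as $\mathbb E_k g-\mathbb E_{k-1}g$ and charged to the child and the parent respectively. It then removes $b_N$ by letting $N\to\infty$ with Property (f) of Lemma \ref{prop-principal-set}, rather than following each point's a.e.\ finite chain; only tidy two spots: split as $\|W^{\frac1p}d_kg\|\le\|W^{\frac1p}\mathbb E_kg\|+\|W^{\frac1p}\mathbb E_{k-1}g\|$ before inserting any reducing matrix (as your final sentence intends, not as your display reads), and note that for $P_1$ the $\mathbb E_{k-1}g$ piece simply vanishes because $\mathbb E_{\kappa_2(P_1)-1}g=\sum_{i<\kappa_2(P_1)}d_ig=0$ on $P_1$.
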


Before giving the proof, we state and verify several lemmas.
For $f\in L_1(\Omega;\mathbb C^d)$ and $m\in \mathbb N$, set
\begin{equation}\label{def-b}
b_m:=\Big(\sum_{P_{m} \in \mathcal{P}_{m}}  \sum_{k>\kappa_2(P_m)}\|W^{\frac1p}d_k(W^{-\frac1p}f)\|_{\mathbb C^d}^2\chi_{P_m}\Big)^{\frac12}.
\end{equation}
The lemma below presents an essential iterative formula for $(b_m)_{m\in\mathbb N}$, which constitutes a key ingredient of the proof of Proposition \ref{Sdominition}.

\begin{lemma}\label{reiteraton}
For each $m\in  \mathbb N$, we have	
\begin{equation}\label{claimsq}
\begin{split}
b_m^2&\leq b_{m+1}^2+2C_\gamma^2 \sum_{P_{m} \in \mathcal{P}_{m}}\|W^{\frac{1}{p}} \widehat{W}_{\kappa_{2}(P_{m})}\|^2 \Big(\mathbb{E}_{\kappa_{2}(P_{m})}\|\widehat{W}_{\kappa_{2}(P_{m})}^{-1} W^{-\frac1p}f\|_{\mathbb{C}^{d}}\Big)^2\chi_{P_m}\\
&\qquad + \sum_{P_{m+1} \in \mathcal{P}_{m+1}}\|W^{\frac{1}{p}} \widehat{W}_{\kappa_{1}(P_{m+1})}\|^2 \Big(\mathbb{E}_{\kappa_{1}(P_{m+1})}\|\widehat{W}_{\kappa_{1}(P_{m+1})}^{-1} W^{-\frac1p}f\|_{\mathbb{C}^{d}}\Big)^2\chi_{P_{m+1}}
\end{split}
\end{equation}
where $C_\gamma$ is the constant given in Lemma \ref{lem-two-cond}.
Furthermore, given an integer $N>1$, we have
\begin{equation}\label{after-iteration}
\begin{split}
b_1^2&\leq b_{N}^2+(2C_\gamma^2+1) \sum_{m=1}^{N}\sum_{P_{m} \in \mathcal{P}_{m}}\|W^{\frac{1}{p}} \widehat{W}_{\kappa_{2}(P_{m})}\|^2 \Big(\mathbb{E}_{\kappa_{2}(P_{m})}\|\widehat{W}_{\kappa_{2}(P_{m})}^{-1} W^{-\frac1p}f\|_{\mathbb{C}^{d}}\Big)^2\chi_{P_m}.
\end{split}
\end{equation}
\end{lemma}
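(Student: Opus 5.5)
Fix $m$ and a set $P_m\in\mathcal P_m$. The idea is to split the tail sum over $k>\kappa_2(P_m)$ on $P_m$ according to how the next stopping time $\tau_{P_m}$ sits relative to $k$. By construction, $P_m$ is the disjoint union of $E(P_m)=P_m\cap\{\tau_{P_m}=\infty\}$ and the children $P_{m+1,j}=\{\tau_{P_m}=j\}\in\mathcal P(P_m)$, $j>\kappa_2(P_m)$. On a child $P_{m+1}$ we have $\kappa_1(P_{m+1})=\kappa_2(P_m)$. We write the sum over $k>\kappa_2(P_m)$ as three pieces:
\begin{itemize}
\item[(A)] $\kappa_2(P_m)<k\le \kappa_2(P_{m+1})-1$;
\item[(B)] $k=\kappa_2(P_{m+1})$;
\item[(C)] $k>\kappa_2(P_{m+1})$.
\end{itemize}
On $E(P_m)$ the whole sum is treated like piece (A). Summing piece (C) over all $P_{m+1}$ and all $P_m$ gives exactly $b_{m+1}^2$, since the sets in $\mathcal P_{m+1}$ are disjoint.

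For pieces (A) and (B) we factor $W^{\frac1p}d_k(W^{-\frac1p}f)=W^{\frac1p}\widehat W_{n_0}\,\widehat W_{n_0}^{-1}d_k(W^{-\frac1p}f)$ with $n_0=\kappa_2(P_m)=\kappa_1(P_{m+1})$. This gives
\[
\|W^{\frac1p}d_k(W^{-\frac1p}f)\|^2\le \|W^{\frac1p}\widehat W_{n_0}\|^2\,\|\widehat W_{n_0}^{-1}d_k(W^{-\frac1p}f)\|^2 .
\]

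For piece (A) we use Property (c) of Lemma \ref{prop-principal-set} on $P_{m+1}$, and Property (d) on $E(P_m)$ (where the sum runs over all $k>n_0$). Either way the square sum of $\|\widehat W_{n_0}^{-1}d_k(W^{-\frac1p}f)\|$ is at most $C_\gamma^2(\mathbb E_{n_0}\|\widehat W_{n_0}^{-1}W^{-\frac1p}f\|)^2$. This contributes $C_\gamma^2$ times the $\kappa_2(P_m)$-term, multiplied by $\chi_{P_m}$.

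For the single jump term (B), with $j=\kappa_2(P_{m+1})$, we write $d_j(W^{-\frac1p}f)=\mathbb E_j(W^{-\frac1p}f)-\mathbb E_{j-1}(W^{-\frac1p}f)$. The second summand is controlled by Property (c) when $j-1>n_0$. When $j-1=n_0$, it is controlled by $\|\widehat W_{n_0}^{-1}\mathbb E_{n_0}(W^{-\frac1p}f)\|\le \mathbb E_{n_0}\|\widehat W_{n_0}^{-1}W^{-\frac1p}f\|$, which follows from the conditional Jensen-type inequality in the preliminaries. In both cases this summand is at most $C_\gamma\,\mathbb E_{n_0}\|\cdots\|$ (taking $C_\gamma\ge1$). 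It is absorbed by the elementary inequality $|a-b|^2\le 2|a|^2+2|b|^2$, and this is the source of the factor $2$ in $2C_\gamma^2$.

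The first summand, $\|\widehat W_{n_0}^{-1}\mathbb E_j(W^{-\frac1p}f)\|$, can be large at the stopping time, so it cannot be bounded by $C_\gamma$. I expect this to be the main obstacle. It must be charged to the third term of \eqref{claimsq}, which carries coefficient $1$ and uses $\kappa_1(P_{m+1})$. I would try to control it as follows. Since $\mathbb E_j$ commutes with $\widehat W_{n_0}^{-1}$, which is $\mathcal F_{n_0}$-measurable, we have
\[
\|\widehat W_{n_0}^{-1}\mathbb E_j(W^{-\frac1p}f)\|\le \mathbb E_j\|\widehat W_{n_0}^{-1}W^{-\frac1p}f\|.
\]
The right side relates to the averages at the later level. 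If this does not fit the third term as stated, the fallback is to absorb it into $\kappa_2(P_{m+1})$-averages multiplied by $\|W^{\frac1p}\widehat W_{\kappa_2(P_{m+1})}\|$. Such a bound is still of the form appearing in \eqref{after-iteration}, which is what the sparse domination ultimately needs. One must then carefully recombine the constants, using $2C_\gamma^2$ for (A)+(B) and $1$ for the new term.

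Finally, \eqref{after-iteration} follows by iterating \eqref{claimsq} from $m=1$ to $N-1$ and telescoping the $b$'s. The remaining step is to observe that the $\mathcal P_{m+1}$-term with $\kappa_1$ is dominated pointwise by the $\mathcal P_m$-term with $\kappa_2$. Indeed, $\kappa_1(P_{m+1})=\kappa_2(P_m)$ and $P_{m+1}\subset P_m$, and the sets in $\mathcal P(P_m)$ are disjoint. So the two sums merge, giving the coefficient $2C_\gamma^2+1$.
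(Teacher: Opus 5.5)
\textbf{The gap: the stopping-time jump term.} Your decomposition matches the paper's. You handle $E(P_m)$ by Property (d) and the range $\kappa_1(P_{m+1})<k<\kappa_2(P_{m+1})$ by Property (c). The tail $k>\kappa_2(P_{m+1})$ gives $b_{m+1}^2$, and you split the jump as $\mathbb E_j-\mathbb E_{j-1}$. Your separate treatment of the case $j-1=\kappa_1(P_{m+1})$ fills a small hole the paper skips. The gap is the $\mathbb E_j$ half of the jump, where you hedge between two targets.

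\textbf{The first target is false.} The inequality \eqref{claimsq} with $\kappa_1(P_{m+1})$ in the last term does not hold. Take:
\begin{itemize}
\item $d=1$, $W\equiv1$, $\widehat W_n\equiv1$, $\Omega=[0,1)$;
\item $\mathcal F_0=\mathcal F_1$ trivial, and $\mathcal F_k$ generated by $[0,\epsilon)$ for $k\ge2$;
\item $f=\frac{1}{2\epsilon}$ on $[0,\epsilon)$ and $f=\frac{1}{2(1-\epsilon)}$ on $[\epsilon,1)$.
\end{itemize}
Then $\mathcal P_1=\{\Omega\}$ with $\kappa_2=1$. Once $\frac{1}{2\epsilon}>C_\gamma$, the set $[0,\epsilon)$ belongs to $\mathcal P_2$ with $\kappa_1=1$ and $\kappa_2=2$, and $b_2=0$. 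On $[0,\epsilon)$ the left side of \eqref{claimsq} for $m=1$ is $(\frac{1}{2\epsilon}-1)^2$, while the right side is $2C_\gamma^2+1$. So the inequality fails for small $\epsilon$.

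Your first attempt bounds the term by $\mathbb E_j\|\widehat W_{n_0}^{-1}W^{-\frac1p}f\|$. That is exactly the quantity that is large when $\gamma_2$ triggers the stop, so it cannot fit.

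\textbf{Your fallback is correct and is the paper's argument.} The estimate is $\|W^{\frac1p}\mathbb E_j(W^{-\frac1p}f)\|\le\|W^{\frac1p}\widehat W_j\|\,\mathbb E_j\|\widehat W_j^{-1}W^{-\frac1p}f\|$. To get it, insert $\widehat W_j\widehat W_j^{-1}$, move the $\mathcal F_j$-measurable $\widehat W_j^{-1}$ inside $\mathbb E_j$, and use $\|\mathbb E_jh\|\le\mathbb E_j\|h\|$. In the paper, the bound obtained for the part of $b_m^2$ supported on $\{\tau_{P_m}<\infty\}$ carries $\kappa_2(P_{m+1})$. So the $\kappa_1$ in the printed statement is a misprint, and you should commit to the $\kappa_2(P_{m+1})$ form.

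\textbf{Your last paragraph must change accordingly.} Merging the $\kappa_1$-terms into their parents (the paper writes this too) requires the false form. It would also give the sum only up to $N-1$, which fails in the same example with $N=2$. With the $\kappa_2(P_{m+1})$ form no merging is needed. Telescope over $m=1,\dots,N-1$: the second terms cover generations $1,\dots,N-1$ and the third terms cover generations $2,\dots,N$. This is why \eqref{after-iteration} correctly sums up to $N$.

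\textbf{Constants.} Your count is off. Piece (A) contributes $C_\gamma^2$. The inequality $|a-b|^2\le2|a|^2+2|b|^2$ adds another $2C_\gamma^2$ and puts a factor $2$ on the new term. So you get $3C_\gamma^2$ and $2$, hence $3C_\gamma^2+2$ after iterating, not $2C_\gamma^2$ and $1$. The paper loses the same factor behind a $\lesssim$. This does not matter for the sparse domination, but the explicit constants in the lemma should be read up to an absolute factor.
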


\begin{proof}
Let us first check \eqref{claimsq}. For each $m\in  \mathbb N$, we write
	$$b_m^2=b_{m,1}^2+b_{m,2}^2$$
	with
$$b_{m,1}:=\Big(\sum_{P_m\in\mathcal P_m}\sum_{j>\kappa_2(P_m)}\|W^{\frac1p}d_j(W^{-\frac1p}f)\|^2_{\mathbb C^d}\chi_{E(P_m)}\Big)^{\frac12};$$
	$$b_{m,2}:=\Big(\sum_{P_m\in\mathcal P_m}\sum_{j>\kappa_2(P_m)}\|W^{\frac1p}d_j(W^{-\frac1p}f)\|^2_{\mathbb C^d}\chi_{\{\tau_{P_m}<\infty\}}\Big)^{\frac12}.$$
	For the term $b_{m,1}$, we deduce from Property \eqref{Pro-4} that
	\begin{align*}
		b_{m,1}^2&=\sum_{P_m\in\mathcal P_m}\sum_{j>\kappa_2(P_m)}\|W^{\frac1p}\widehat W_{\kappa_2(P_m)}\widehat W^{-1}_{\kappa_2(P_m)}d_j(W^{-\frac1p}f)\|^2_{\mathbb C^d}\chi_{E(P_m)}\\
		&\leq \sum_{P_m\in\mathcal P_m}\sum_{j>\kappa_2(P_m)}\|W^{\frac1p}\widehat W_{\kappa_2(P_m)}\|^2\|\widehat W^{-1}_{\kappa_2(P_m)}d_j(W^{-\frac1p}f)\|^2_{\mathbb C^d}\chi_{E(P_m)}\\
		&\leq C_\gamma^2\sum_{P_m\in\mathcal P_m}\|W^{\frac1p}\widehat W_{\kappa_2(P_m)}\|^2\Big(\mathbb E_{\kappa_2(P_m)}\|\widehat W_{\kappa_2(P_m)}^{-1}W^{-\frac1p}f\|_{\mathbb C^d}\Big)^2\chi_{E(P_m)}.
	\end{align*}
	Now we turn to estimate $b_{m,2}$. Note that for any fixed $P_m\in\mathcal P_{m}$,
	\begin{equation}\label{observation-a-2}\{\tau_{P_{m}}<\infty\}=\bigcup_{P\in \mathcal P(P_m)}P = \sum_{P\in \mathcal P(P_m)}P.\end{equation}
Combining this with the fact $\kappa_1(P_{m+1})=\kappa_2(P_m)$, we have
	\begin{equation}\label{bm2}
		\begin{split}
			b_{m,2}^2&=\sum_{P_m\in\mathcal P_m}\sum_{P_{m+1}\in\mathcal P_{m+1}(P_m)}\sum_{k>\kappa_2(P_m)}\|W^{\frac1p}d_k(W^{-\frac1p}f)\|_{\mathbb C^d}^2 \chi_{P_{m+1}}\\
			&=\sum_{P_{m+1}\in\mathcal P_{m+1}}\sum_{k>\kappa_1(P_{m+1})}\|W^{\frac1p}d_k(W^{-\frac1p}f)\|_{\mathbb C^d}^2 \chi_{P_{m+1}}\\
			&\leq \sum_{P_{m+1}\in\mathcal P_{m+1}} \sum_{\kappa_1(P_{m+1})<k\leq \kappa_2(P_{m+1})}\|W^{\frac1p}d_k(W^{-\frac1p}f)\|_{\mathbb C^d}^2\chi_{P_{m+1}}\\
			&\qquad+\sum_{P_{m+1}\in\mathcal P_{m+1}}\sum_{i>\kappa_2(P_{m+1})}\|W^{\frac1p}d_i(W^{-\frac1p}f)\|^2_{\mathbb C^d}\chi_{P_{m+1}}\\ 	&= \sum_{P_{m+1}\in\mathcal P_{m+1}} \sum_{\kappa_1(P_{m+1})<k\leq \kappa_2(P_{m+1})}\|W^{\frac1p}d_k(W^{-\frac1p}f)\|_{\mathbb C^d}^2\chi_{P_{m+1}}+b_{m+1}^2.
		\end{split}
	\end{equation}
On one hand, by virtue of Property \eqref{Pro-3}, $\kappa_1(P_{m+1})=\kappa_2(P_m)$ and \eqref{observation-a-2}, we get
	\begin{equation}\label{estimate-b2-1}
	\begin{split}
		&\sum_{P_{m+1}\in\mathcal P_{m+1}} \sum_{\kappa_1(P_{m+1})<k< \kappa_2(P_{m+1})}\|W^{\frac1p}d_k(W^{-\frac1p}f)\|_{\mathbb C^d}^2\chi_{P_{m+1}}\\
		&\leq \sum_{P_{m+1}\in\mathcal P_{m+1}} \sum_{\kappa_1(P_{m+1})<k< \kappa_2(P_{m+1})}\|W^{\frac{1}{p}} \widehat{W}_{\kappa_1(P_{m+1})}\|^2 \|\widehat{W}_{\kappa_1(P_{m+1})}^{-1} d_k(W^{-\frac1p}f)\|^2_{\mathbb{C}^{d}}\chi_{P_{m+1}}\\
		&\leq C_\gamma^2\sum_{P_{m+1}\in\mathcal P_{m+1}} \|W^{\frac{1}{p}} \widehat{W}_{\kappa_1(P_{m+1})}\|^2 \Big(\mathbb{E}_{\kappa_1(P_{m+1})}\|\widehat{W}_{\kappa_1(P_{m+1})}^{-1} W^{-\frac1p}f\|_{\mathbb{C}^{d}}\Big)^2\chi_{P_{m+1}}\\
		&=  C_\gamma^2\sum_{P_{m}\in\mathcal P_{m}}  \|W^{\frac{1}{p}} \widehat{W}_{\kappa_2(P_{m})}\|^2 \Big(\mathbb{E}_{\kappa_2(P_{m})}\|\widehat{W}_{\kappa_2(P_{m})}^{-1} W^{-\frac1p}f\|_{\mathbb{C}^{d}}\Big)^2\chi_{\{\tau_{P_{m}}<\infty\}}.
\end{split}
\end{equation}
On the other hand, it follows from Property \eqref{Pro-3} that
\begin{align*}
&\sum_{P_{m+1}\in\mathcal P_{m+1}} \|W^{\frac1p}d_{\kappa_2(P_{m+1})}(W^{-\frac1p}f)\|_{\mathbb C^d}^2\chi_{P_{m+1}}\\
&\lesssim \sum_{P_{m+1}\in\mathcal P_{m+1}} \|W^{\frac1p}\mathbb E_{\kappa_2(P_{m+1})}(W^{-\frac1p}f)\|_{\mathbb C^d}^2\chi_{P_{m+1}}\\\
&\qquad +\sum_{P_{m+1}\in\mathcal P_{m+1}} \|W^{\frac1p}\mathbb E_{\kappa_2(P_{m+1})-1}(W^{-\frac1p}f)\|_{\mathbb C^d}^2\chi_{P_{m+1}}\\		
&\leq \sum_{P_{m+1}\in\mathcal P_{m+1}}  \|W^{\frac{1}{p}} \widehat{W}_{\kappa_2(P_{m+1})}\|^2 \Big(\mathbb{E}_{\kappa_2(P_{m+1})}\|\widehat{W}_{\kappa_2(P_{m+1})}^{-1} W^{-\frac1p}f\|_{\mathbb{C}^{d}}\Big)^2\chi_{P_{m+1}}\\
&\qquad+  C_\gamma^2\sum_{P_{m+1}\in\mathcal P_{m+1}}  \|W^{\frac{1}{p}} \widehat{W}_{\kappa_1(P_{m+1})}\|^2 \Big(\mathbb{E}_{\kappa_1(P_{m+1})}\|\widehat{W}_{\kappa_1(P_{m+1})}^{-1} W^{-\frac1p}f \|_{\mathbb{C}^{d}}\Big)^2\chi_{P_{m+1}}.
	\end{align*}
This, together with  $\kappa_1(P_{m+1})=\kappa_2(P_m)$ and  \eqref{observation-a-2}, yields
\begin{equation}\label{estimate-b2-2}
\begin{split}
&\sum_{P_{m+1}\in\mathcal P_{m+1}} \|W^{\frac1p}d_{\kappa_2(P_{m+1})}(W^{-\frac1p}f)\|_{\mathbb C^d}^2\chi_{P_{m+1}}\\
&\leq \sum_{P_{m+1}\in\mathcal P_{m+1}} \|W^{\frac{1}{p}} \widehat{W}_{\kappa_2(P_{m+1})}\|^2 \Big(\mathbb{E}_{\kappa_2(P_{m+1})}\|\widehat{W}_{\kappa_2(P_{m+1})}^{-1} W^{-\frac1p}f\|_{\mathbb{C}^{d}}\Big)^2\chi_{P_{m+1}}\\
&\qquad +  C_\gamma^2\sum_{P_{m}\in\mathcal P_{m}} \|W^{\frac{1}{p}} \widehat{W}_{\kappa_2(P_{m})}\|^2 \Big(\mathbb{E}_{\kappa_2(P_{m})}\|\widehat{W}_{\kappa_2(P_{m})}^{-1} W^{-\frac1p}f\|_{\mathbb{C}^{d}}\Big)^2\chi_{\{\tau_{P_{m}}<\infty\}}.
\end{split}
\end{equation}
Plugging the estimates \eqref{estimate-b2-1}, \eqref{estimate-b2-2}  into \eqref{bm2}, we conclude that
\begin{align*}
b_{m,2}^2& \leq 2C_\gamma^2 \sum_{P_{m} \in \mathcal{P}_{m}}\|W^{\frac{1}{p}} \widehat{W}_{\kappa_{2}(P_{m})}\|^2 \Big(\mathbb{E}_{\kappa_{2}\left(P_{m}\right)}\|\widehat{W}_{\kappa_{2}(P_{m})}^{-1} W^{-\frac1p}f\|_{\mathbb{C}^{d}}\Big)^2\chi_{\{\tau_{P_{m}}<\infty\}}\\
		&\qquad + \sum_{P_{m+1} \in \mathcal{P}_{m+1}}\|W^{\frac{1}{p}} \widehat{W}_{\kappa_{2}(P_{m+1})}\|^2 \Big(\mathbb{E}_{\kappa_{2}(P_{m+1})}\|\widehat{W}_{\kappa_{2}(P_{m+1})}^{-1} W^{-\frac1p}f\|_{\mathbb{C}^{d}}\Big)^2\chi_{P_{m+1}}+b_{m+1}^2.
\end{align*}
A combination of the  estimates for $b_{m,1}$ and  $b_{m,2}$ leads to the desired inequality \eqref{claimsq}.

We now turn to verify
\eqref{after-iteration}. Given $N>1$, applying  \eqref{claimsq} for $N-1$ times, we get
\begin{equation*}
\begin{split}
b_1^2&\leq b_{N}^2+2C_\gamma^2 \sum_{m=1}^{N-1}\sum_{P_{m} \in \mathcal{P}_{m}}\|W^{\frac{1}{p}} \widehat{W}_{\kappa_{2}(P_{m})}\|^2 \Big(\mathbb{E}_{\kappa_{2}(P_{m})}\|\widehat{W}_{\kappa_{2}(P_{m})}^{-1} W^{-\frac1p}f\|_{\mathbb{C}^{d}}\Big)^2\chi_{P_m}\\
&\qquad + \sum_{m=2}^N \sum_{P_{m} \in \mathcal{P}_{m}}\|W^{\frac{1}{p}} \widehat{W}_{\kappa_{1}(P_{m})}\|^2 \Big(\mathbb{E}_{\kappa_{1}(P_{m})}\|\widehat{W}_{\kappa_{1}(P_{m})}^{-1} W^{-\frac1p}f\|_{\mathbb{C}^{d}}\Big)^2\chi_{P_{m}}
\end{split}
\end{equation*}
By the construction of the principal set, for any $P\in\mathcal P_n$, $n\geq 2$, there exists $P'$ such that  $P'\in\mathcal P_{n-1}$ and $\kappa_2(P')=\kappa_1(P)$. Moreover, for any fixed $P'\in\mathcal P_{n-1}$, if $P_1,P_2\in \mathcal P_n$ and $P_1,P_2\subset P'$, then $P_1=P_2$ or $P_1\cap P_2=\emptyset$. Thus we have
\begin{equation*}
\begin{split}
b_1^2&\leq b_{N}^2+(2C_\gamma^2+1) \sum_{m=1}^{N-1}\sum_{P_{m} \in \mathcal{P}_{m}}\|W^{\frac{1}{p}} \widehat{W}_{\kappa_{2}(P_{m})}\|^2 \Big(\mathbb{E}_{\kappa_{2}(P_{m})}\|\widehat{W}_{\kappa_{2}(P_{m})}^{-1} W^{-\frac1p}f\|_{\mathbb{C}^{d}}\Big)^2\chi_{P_m}.
\end{split}
\end{equation*}
The proof is finished.
\end{proof}

Besides the iterative formula of $(b_m)_{m\in\mathbb N}$, we need two more lemmas.

\begin{lemma}\label{SWf=0}
We have
\begin{equation*}
S_W f \chi_{\left\{\tau_{1}=\infty\right\}}=0.
\end{equation*}
\end{lemma}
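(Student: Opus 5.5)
We work with $n=0$, so $\tau_1=\inf\{\ell>0:\gamma(0,\ell)>0\}$. On $\{\tau_1=\infty\}$ we have $\gamma(0,\ell)=0$ for every $\ell\geq 1$. The plan is to show that this forces every martingale difference of $W^{-1/p}f$ to vanish there. Then every summand $\|W^{1/p}d_k(W^{-1/p}f)\|_{\mathbb C^d}$ of $S_Wf$ vanishes on that set.

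\textbf{Step 1: the case $\mathbb E_0\|\widehat W_0^{-1}W^{-1/p}f\|_{\mathbb C^d}>0$.} Here $\mathcal F_0$ is the trivial $\sigma$-algebra, with the convention $d_1=\mathbb E_1-\mathbb E_0$ and $\mathbb E_0$ the full expectation. The indicator in $\gamma(0,\ell)$ equals $1$ on this event. So $\gamma(0,\ell)=0$ gives $\gamma_2(0,\ell)=0$, that is,
$$\|\widehat W_0^{-1}\mathbb E_\ell(W^{-1/p}f)\|_{\mathbb C^d}=0 \quad\text{for all }\ell\geq 1.$$
Since $\widehat W_0$ is invertible, $\mathbb E_\ell(W^{-1/p}f)=0$ on $\{\tau_1=\infty\}$ for every $\ell$. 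Hence $d_k(W^{-1/p}f)=0$ there for all $k$. One can equally use $\gamma_1(0,\ell)=0$, which directly kills $\widehat W_0^{-1}d_i(W^{-1/p}f)$ for $1\leq i\leq\ell$.

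\textbf{Step 2: the degenerate case $\mathbb E_0\|\widehat W_0^{-1}W^{-1/p}f\|_{\mathbb C^d}=0$.} Then $\widehat W_0^{-1}W^{-1/p}f=0$ almost surely. Invertibility of $\widehat W_0$ and of $W^{-1/p}$ gives $f=0$ a.s., so $S_Wf=0$ identically.

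\textbf{Step 3: conclusion.} Combining the two cases, $\|W^{1/p}d_k(W^{-1/p}f)\|_{\mathbb C^d}\chi_{\{\tau_1=\infty\}}=0$ for every $k$. Summing over $k$ gives $S_Wf\,\chi_{\{\tau_1=\infty\}}=0$.

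The only delicate point is bookkeeping with the index $n=0$. One must check that $\mathbb E_0$, the trivial $\sigma$-algebra $\mathcal F_0$, and the convention $d_1(W^{-1/p}f)=\mathbb E_1(W^{-1/p}f)-\mathbb E_0(W^{-1/p}f)$ make the first difference fall into the range $i>0$ covered by $\gamma_1(0,\cdot)$. If instead $d_1$ denotes $\mathbb E_1$ itself, the argument via $\gamma_2$ still applies, since it kills every $\mathbb E_\ell(W^{-1/p}f)$ with $\ell\geq 1$ directly.
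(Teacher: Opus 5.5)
Your proof is correct and follows essentially the paper's argument: on $\{\tau_1=\infty\}$ every $\gamma(0,\ell)$ vanishes, and invertibility of $\widehat W_0$ then forces all $d_i(W^{-1/p}f)=0$. The paper reads this off $\gamma_1$, which works under either convention for $d_1$, whereas your $\gamma_2$ route alone does not control $d_1=\mathbb E_1-\mathbb E_0$ (the convention you state in Step 1), though your $\gamma_1$ remark covers that. Your Step 2 handles the case $\mathbb E_0\|\widehat W_0^{-1}W^{-1/p}f\|_{\mathbb C^d}=0$, where the indicator makes $\gamma\equiv0$ and one must show $f=0$ directly; the paper passes over this case silently.
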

	
\begin{proof}
Recall that $n=0$. We have, on the set $\{\tau_{1}=\infty\}$, that
$$0=\gamma_1(0,\infty)=\frac{\left(\sum_{ i=1}^{  \infty}\|\widehat W_{0}^{-1}d_i( W^{-1/p}f)\|^2_{\mathbb C^d}\right)^{1/2}}{\mathbb E_{0}\|\widehat W_0^{-1}W^{-1/p}f\|_{\mathbb C^d}}\chi_{\{\mathbb E_{0}\|\widehat W_0^{-1}W^{-1/p}f\|_{\mathbb C^d}>0\}}.$$
Since   $\widehat W_0^{-1}$ is positive definite, it follows that $d_i(W^{-1/p}f)=0$ for all $i\geq 1$. The assertion is verified.
\end{proof}

\begin{lemma}\label{swf=01}
Let $P_1\in\mathcal P_1$. We have
\begin{equation*}\sum_{i<\kappa_2(P_1)}\|W^{\frac1p}d_i(W^{-\frac1p}f)\|_{\mathbb C^d}^2\chi_{P_1}=0.
\end{equation*}
\end{lemma}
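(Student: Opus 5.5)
Fix $P_1=P_{1,j}\in\mathcal P_1$, so that $P_1=\{\tau_1=j\}$ with $\kappa_2(P_1)=j$ and $\kappa_1(P_1)=n=0$. Since $W^{\frac1p}$ is invertible almost everywhere, it suffices to show that $d_i(W^{-\frac1p}f)=0$ on $P_1$ for every $1\le i<j$. By the definition of $\tau_1$, on $P_1$ we have $\gamma(0,\ell)=0$ for all $0<\ell<j$. In particular, because $\gamma\ge\gamma_1\ge 0$, we get $\gamma_1(0,j-1)=0$ on $P_1$ (when $j\ge 2$; if $j=1$ the sum is empty and there is nothing to prove).

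The first step is to split $P_1$ according to whether $\mathbb E_0\|\widehat W_0^{-1}W^{-\frac1p}f\|_{\mathbb C^d}>0$. On the part where this quantity is positive, $\gamma_1(0,j-1)=0$ means
\[
\sum_{i=1}^{j-1}\|\widehat W_0^{-1}d_i(W^{-\frac1p}f)\|^2_{\mathbb C^d}=0 .
\]
Since $\widehat W_0^{-1}$ is positive definite, this gives $d_i(W^{-\frac1p}f)=0$ for $1\le i\le j-1$, exactly as in the proof of Lemma \ref{SWf=0}.

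The second step is to show that on $\{\mathbb E_0\|\widehat W_0^{-1}W^{-\frac1p}f\|_{\mathbb C^d}=0\}$ everything vanishes anyway. This set is $\mathcal F_0$-measurable; with $n=0$ it is either trivial or taken in the $\mathcal F_0$ sense. Integrating over it shows that $\|\widehat W_0^{-1}W^{-\frac1p}f\|=0$ a.e.\ there, so $W^{-\frac1p}f=0$ a.e.\ on this set. Consequently all conditional expectations $\mathbb E_i(W^{-\frac1p}f)$ vanish on it, because it lies in $\mathcal F_0\subset\mathcal F_i$. Hence all differences $d_i(W^{-\frac1p}f)$ vanish there as well. (On this set every $\gamma$ is $0$, so $\tau_1=\infty$ and the set is in fact disjoint from $P_1$; either way the claim holds.)

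Combining the two steps, we get $\|W^{\frac1p}d_i(W^{-\frac1p}f)\|=0$ on $P_1$ for $i<\kappa_2(P_1)$, which is the assertion. The only delicate point is the degenerate set where the normalizing expectation vanishes, and the second step is designed to handle exactly that.
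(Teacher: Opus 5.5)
Your proof is correct and follows the paper's argument: on $P_1$ one has $\gamma_1(0,\kappa_2(P_1)-1)=0$, and positive definiteness of $\widehat W_0^{-1}$ then forces $d_i(W^{-\frac1p}f)=0$ for $1\le i<\kappa_2(P_1)$. Your separate treatment of the set $\{\mathbb E_0\|\widehat W_0^{-1}W^{-\frac1p}f\|_{\mathbb C^d}=0\}$ is harmless but redundant, since, as you note yourself, this set lies in $\{\tau_1=\infty\}$ and is therefore disjoint from $P_1$; the paper uses this fact implicitly.
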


\begin{proof}
Given $P_1\in\mathcal P_1$, observe that on the set $P_1$ we have
$$0=\gamma_1(0,\kappa_2(P_1)-1)=\frac{\left(\sum_{ i=1}^{\kappa_2(P_1)-1}\|\widehat W_{0}^{-1}d_i( W^{-1/p}f)\|^2_{\mathbb C^d}\right)^{1/2}}{\mathbb E_{0}\|\widehat W_0^{-1}W^{-1/p}f\|_{\mathbb C^d}}\chi_{\{\mathbb E_{0}\|\widehat W_0^{-1}W^{-1/p}f\|_{\mathbb C^d}>0\}}.$$   Since  $\widehat W_0^{-1}$ is positive definite, it follows that $d_i(W^{-1/p}f)=0$ for $1\leq i<\kappa_2(P_1)$. This gives the desired result.
\end{proof}

We now are full equipped to prove Proposition \ref{Sdominition}.

\begin{proof}[Proof of Proposition \ref{Sdominition}]
By Lemma \ref{SWf=0} and Lemma \ref{swf=01}, we have
	\begin{align*}
		\quad S_W^2 f&=  S_W^2 f\chi_{\left\{\tau_{1}<\infty\right\}} = \sum_{P_1\in\mathcal P_1}(S_W^2 f)\chi_{P_1}\\
		&= \sum_{P_1\in\mathcal P_1}\|W^{\frac{1}{p}}d_{\kappa_2(P_1)}(W^{-\frac1p}f)\|_{\mathbb{C}^{d}}^2\chi_{P_1}+\sum_{P_1\in\mathcal P_1}\sum_{i> \kappa_2(P_1)}\|W^{\frac{1}{p}}d_i(W^{-\frac1p}f)\|_{\mathbb{C}^{d}}^2\chi_{P_1}\\ &=\sum_{P_1\in \mathcal P_1} \|W^{\frac1p}d_{\kappa_2(P_1)}(W^{-\frac1p}f)\|_{\mathbb C^d}^2\chi_{P_1}+b_1^2\\
	&\lesssim  \sum_{P_1\in \mathcal P_1} \|W^{\frac1p}\mathbb E_{\kappa_2(P_1)}(W^{-\frac1p}f)\|_{\mathbb C^d}^2\chi_{P_1}+\sum_{P_1\in \mathcal P_1} \|W^{\frac1p}\mathbb E_{\kappa_2(P_1)-1}(W^{-\frac1p}f)\|_{\mathbb C^d}^2\chi_{P_1}+b_1^2.
	\end{align*}
Observe that
$$\|W^{\frac1p}\mathbb E_{\kappa_2(P_1)}(W^{-\frac1p}f)\|_{\mathbb C^d}\leq\|W^{\frac1p}\widehat W_{\kappa_2(P_1)}\|\Big(\mathbb E_{\kappa_2(P_1)}\|\widehat W_{\kappa_2(P_1)}^{-1}W^{-\frac1p}f\|_{\mathbb C^d}\Big).$$
Moreover, according to the definition of $\kappa_2(P_1)$, we  have
\begin{align*}
\|W^{\frac1p}\mathbb E_{\kappa_2(P_1)-1}(W^{-\frac1p}f)\|_{\mathbb C^d}\leq\|W^{\frac1p}\widehat W_{\kappa_1(P_1)}\| \cdot \|\widehat W_{\kappa_1(P_1)}^{-1}\mathbb E_{\kappa_2(P_1)-1}(W^{-\frac1p}f\|_{\mathbb C^d})\|=0.
\end{align*}
Then, it follows that
\begin{align*}
S_W^2f\leq b_1^2& +\sum_{P_1\in \mathcal P_1} \|W^{\frac1p}\widehat W_{\kappa_2(P_1)}\|^2\Big(\mathbb E_{\kappa_2(P_1)}\|\widehat W_{\kappa_2(P_1)}^{-1}W^{-\frac1p}f\|_{\mathbb C^d}\Big)^2\chi_{P_1}.
\end{align*}
Given an integer $N>1$, we deduce from \eqref{after-iteration} that
	\begin{align*}
		S_W^2f\leq b_N^2+2(C_\gamma^2+1)\sum_{i=1}^{N-1}\sum_{P_i\in \mathcal P_i} \|W^{\frac1p}\widehat W_{\kappa_2(P_i)}\|^2\Big(\mathbb E_{\kappa_2(P_i)}\|\widehat W_{\kappa_2(P_i)}^{-1}W^{-\frac1p}f\|_{\mathbb C^d}\Big)^2\chi_{P_i}.
	\end{align*}
Note that the function $b_N$ is supported by $\bigcup_{P\in \mathcal P_N}P$. We conclude the result by sending $N\to \infty$ and Property \eqref{Pro-6}.
\end{proof}

\begin{proof}[Proof of Theorem \ref{main2}]
A combination of Proposition \ref{Sdominition} with  Proposition \ref{5.4}  immediately yields the desired assertion.
\end{proof}

\medskip

\section{Proof of Theorem \ref{main1}}
This section devotes to the sharp  weighted norm estimate for  martingale square functions in the scalar setting. With some proper modification on the previous arguments, we show that in the scalar setting the exponent of the weight in Theorem \ref{main2} can be improved to $\max\{1/2 ,1/(p-1)\}$ which is the best possible. Surprisingly, it seems that this result previously has been only known for dyadic square functions (\cite{MR2854179,MR2697378,MR1771755,MR1873024,MR1748283}), continuous-time martingales with continuous
path (\cite[Theorem 4.1]{MR3748572}), and  discrete-time martingales associated with atomic filtration (\cite[Theorem 3.6]{MR3985127}).

Suppose that $w$ is a classical weight on $(\Omega,\mathcal F,\mathbb P)$. Now the reducing operators $\widetilde w_n$, $\widehat w_n$ can be taken as
$$\widetilde w_n=(\mathbb E_n w)^{\frac1p},\quad \widehat w_n=\big(\mathbb E_n(w^{-\frac {p'}p})\big)^{\frac1{p'}}.$$
For $1<p<\infty$, we say that $w$ satisfies the $A_p$ condition (denoted by $w\in A_p$) if
$[w]_{A_p}:=\sup_{n}\|\widetilde w_n  \widehat w_n\|_{\infty}^p$ is finite. Denote by  $\mathbb E_n^w$ the conditional expectation, with respect to $\mathcal F_n$ and the measure induced by $w$. Then we have the following simple fact:
\begin{equation}\label{simple}
\mathbb E_n^w(f)=\mathbb E_n(wf)(\mathbb E_n(w))^{-1},\quad \forall f\in L_1^w(\Omega).
\end{equation}
For $r\geq 1$, we modify the sparse operators  as follows:
$$\mathcal T_{w,r}f=\Big(\sum_{P \in \mathcal{P}}w^{\frac{r}{p}} \big(\mathbb{E}_{\kappa_{2}(P)}(|w^{-\frac1p}f| )\big)^r\chi_{P}\Big)^{\frac1r}.$$

\begin{lemma}\label{sparse-enhanced}
	Let $1<r<p<\infty$. Given a scalar weight $w\in A_p$, we have
	\begin{equation*}\label{Lem-main}
		\left\|\mathcal T_{w,r}f\right\|_{L_p(\Omega)} \lesssim_{p,d}[w]_{A_p}^{\max\{\frac1r ,\frac1{p-1}\}} \|f\|_{L_p(\Omega)},\quad\forall f\in L_p(\Omega).
	\end{equation*}
\end{lemma}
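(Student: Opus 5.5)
The plan is to reduce the lemma to a two-weight estimate for an unweighted scalar sparse operator. Then I would run the standard sharp sparse argument (of Cruz-Uribe--Martell--P\'erez / Lerner type), using only the sparse structure from Lemma \ref{prop-principal-set} and Doob's inequality with respect to the measures $w\,d\mathbb P$ and $\sigma\,d\mathbb P$.

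\textbf{Reduction.} Put $\sigma=w^{1-p'}=w^{-p'/p}$ and $h=|w^{-1/p}f|$, and write $a_P=\mathbb E_{\kappa_2(P)}h$. Then
$$\|\mathcal T_{w,r}f\|_{L_p}^p=\int_\Omega\Big(\sum_{P}a_P^r\chi_P\Big)^{p/r}w\,d\mathbb P .$$
Set $\varphi=h/\sigma$. Since $(1-p')(1-p)=1$, we get $\int\varphi^p\sigma\,d\mathbb P=\int|f|^p\,d\mathbb P$. By \eqref{simple},
$$a_P=\mathbb E_{\kappa_2(P)}(\sigma)\,\mathbb E^\sigma_{\kappa_2(P)}(\varphi).$$
Because $p/r>1$, duality in $L_{p/r}(w)$ reduces the lemma to bounding
$$\sum_P\int_P a_P^r\,\mathbb E_{\kappa_2(P)}(Gw)\,d\mathbb P,\qquad 0\le G,\ \|G\|_{L_{(p/r)'}(w)}\le1 .$$
Here I used $P\in\mathcal F_{\kappa_2(P)}$ (Property \eqref{Pro-2}). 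Writing $\mathbb E_{\kappa_2(P)}(Gw)=\mathbb E_{\kappa_2(P)}(w)\,\mathbb E^w_{\kappa_2(P)}(G)$, each summand becomes
$$\int_P \mathbb E(\sigma)^r\,\mathbb E(w)\,(\mathbb E^\sigma\varphi)^r\,\mathbb E^wG\,d\mathbb P .$$

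\textbf{Sparseness.} The first generation $\mathcal P_1$ is disjoint, so its contribution is handled directly. For the other generations, Property \eqref{Pro-5} replaces $\chi_P$ by $2\mathbb E_{\kappa_2(P)}\chi_{E(P)}$. I then need to convert $\mathbb E(\sigma)^r\mathbb E(w)\,\mathbb P(E(P)\mid\mathcal F_{\kappa_2(P)})$ into weighted masses of $E(P)$. I would first use the $A_p$ bound $\mathbb E(w)\mathbb E(\sigma)^{p-1}\le[w]_{A_p}$, raised to a suitable power. I would then use the conditional H\"older inequality
$$\mathbb E_{\kappa_2(P)}\chi_{E(P)}\le\big(\mathbb E_{\kappa_2(P)}(w\chi_{E(P)})\big)^{1/p}\big(\mathbb E_{\kappa_2(P)}(\sigma\chi_{E(P)})\big)^{1/p'},$$
and its variants with other exponent pairs.

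\textbf{Two cases.} If $r\ge p-1$, I write $\mathbb E(\sigma)^r\mathbb E(w)=(\mathbb E(w)\mathbb E(\sigma)^{p-1})^{1/(p-1)}\,\mathbb E(\sigma)^{r-1/(p'-1)\cdots}$, choosing the split so that the leftover powers of $\mathbb E(w)$ and $\mathbb E(\sigma)$ match the H\"older factors above. This yields $[w]_{A_p}^{1/(p-1)}$ times a sum over the disjoint sets $E(P)$ (Property \eqref{Pro-1}) of products of powers of $\sup_n\mathbb E_n^\sigma\varphi$ and $\sup_n\mathbb E^w_nG$ against $w$ and $\sigma$. If $r<p-1$, I would use the power $r/(p-1)$ of the $A_p$ quantity, giving $[w]_{A_p}^{r/(p-1)\cdot\ldots}$. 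I will make the split so that after taking the $r$-th root the exponent becomes $1/r$ (for $r<p-1$) and $1/(p-1)$ (for $r\ge p-1$), combining the two regimes via the scale $L_p(w)$ vs.\ $L_p(\sigma)$ as in the sharp dyadic argument. In both cases, H\"older's inequality followed by Doob's inequality in $L_p(\sigma)$ for $\mathcal M^\sigma$ and in $L_{(p/r)'}(w)$ for $\mathcal M^w$ finishes the proof. These maximal constants depend only on the exponent, since $\mathbb E_n^\sigma$ and $\mathbb E_n^w$ are genuine conditional expectations for the probability measures $\sigma\,d\mathbb P/\sigma(\Omega)$ and $w\,d\mathbb P/w(\Omega)$.

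\textbf{Main obstacle.} The hard step is the exponent bookkeeping in the conversion from $\mathbb E_{\kappa_2(P)}\chi_{E(P)}$ to weighted masses. In the dyadic setting one simply has $|Q|\le 2|E_Q|$ and uses global H\"older. Here the sparseness is only conditional, and the ``averages'' $\mathbb E_{\kappa_2(P)}$ are functions rather than numbers. So the H\"older step must be done inside $\mathbb E_{\kappa_2(P)}$, keeping all $\mathcal F_{\kappa_2(P)}$-measurable factors outside. For the case $r<p-1$ I would first try the splitting
$$\mathbb E(\sigma)^r\mathbb E(w)=\big(\mathbb E(w)\mathbb E(\sigma)^{p-1}\big)^{\frac{r}{p-1}}\mathbb E(w)^{1-\frac{r}{p-1}},$$
paired with the conditional H\"older inequality using exponents $\big(\tfrac{p-1}{r},\tfrac{p-1}{p-1-r}\big)$. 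If that fails to close, I would fall back on interpolating, as in the proof of Proposition \ref{5.4}, between $r$ near $p-1$ and the trivial endpoint $r=p$ from Lemma \ref{5.4'}.
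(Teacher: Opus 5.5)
Your skeleton is the paper's: duality, Property (e) of Lemma~\ref{prop-principal-set} to pass from $P$ to $E(P)$, H\"older, and Doob in $L_p(\sigma)$ and $L_{(p/r)'}(w)$. Your $G$ is the paper's $\widetilde g=w^{r/p-1}g$.

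\textbf{The gap.} The step you call the main obstacle is where all the work lies, and the splittings you propose do not carry it out. Writing $\mathbb E=\mathbb E_{\kappa_2(P)}$, you need, on $E(P)$,
\[
(\mathbb E\sigma)^r\,\mathbb E w\lesssim[w]_{A_p}^{\max\{1,\frac r{p-1}\}}\big(\mathbb E(\sigma\chi_{E(P)})\big)^{\frac rp}\big(\mathbb E(w\chi_{E(P)})\big)^{1-\frac rp}.
\]
Only these two masses, with exactly these exponents, pair under H\"older with $\int(\mathcal M^\sigma\varphi)^p\sigma$ and $\int(\mathcal M^wG)^{(p/r)'}w$.

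Your bookkeeping toward this is inconsistent:
\begin{itemize}
\item For $r\ge p-1$ you claim a factor $[w]_{A_p}^{1/(p-1)}$ at the level of $\|\mathcal T_{w,r}f\|_{L_p}^r$. That would give the final exponent $\frac1{r(p-1)}$.
\item For $r<p-1$, the split $(\mathbb Ew(\mathbb E\sigma)^{p-1})^{\frac r{p-1}}(\mathbb Ew)^{1-\frac r{p-1}}$ would give the final exponent $\frac1{p-1}<\frac1r$.
\end{itemize}
Both are too good. With $r=2$ they would beat the sharp exponent of Theorem~\ref{main1}, since $S_w\lesssim\mathcal T_{w,2}$. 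The reason is that $\mathbb Ew\ge\mathbb E(w\chi_{E(P)})$, so the leftover $(\mathbb Ew)^{1-\frac r{p-1}}\,\mathbb E\chi_{E(P)}$ cannot become masses of $E(P)$ without paying further powers of $[w]_{A_p}$. Also, a conditional H\"older on $\mathbb E\chi_{E(P)}$ with exponents $(\frac{p-1}r,\frac{p-1}{p-1-r})$ does not produce $\mathbb E(w\chi_{E(P)})$ and $\mathbb E(\sigma\chi_{E(P)})$ together; only $(p,p')$ does.

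Your fallback fails too. Interpolating between $r\approx p-1$ and $r=p$ covers only $r\in[p-1,p]$. The interpolation of Proposition~\ref{5.4} between $r=1$ and $r=p$ gives, for $r=2$, the exponent $\frac12+\frac1{2(p-1)^2}$, which is why the paper argues directly.

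\textbf{The missing idea.} Extract exactly the power $\frac rp$ of the $A_p$ quantity:
\[
(\mathbb E\sigma)^r\,\mathbb Ew=\big[(\mathbb E\sigma)^{p-1}\mathbb Ew\big]^{\frac rp}\,B^{\frac r{p(p-1)}}A^{1-\frac rp}\big(\mathbb E(\sigma\chi_{E(P)})\big)^{\frac rp}\big(\mathbb E(w\chi_{E(P)})\big)^{1-\frac rp},
\]
where $A=\mathbb Ew/\mathbb E(w\chi_{E(P)})$ and $B=\big(\mathbb E\sigma/\mathbb E(\sigma\chi_{E(P)})\big)^{p-1}$. Both ratios are at least $1$. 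On $E(P)$ their product is $\lesssim[w]_{A_p}$, because Property (e) and the conditional H\"older inequality with $(p,p')$ give $1\lesssim\mathbb E(w\chi_{E(P)})\big(\mathbb E(\sigma\chi_{E(P)})\big)^{p-1}$ there. Hence
\[
B^{\frac r{p(p-1)}}A^{1-\frac rp}\le(AB)^{\max\{\frac r{p(p-1)},1-\frac rp\}},
\]
and the total power is $\frac rp+\max\{\frac r{p(p-1)},1-\frac rp\}=\max\{\frac r{p-1},1\}$, with no case distinction. This is the paper's estimate of ${\rm III}_2$. On $\mathcal P_1$ both ratios equal $1$, since $P\in\mathcal F_{\kappa_2(P)}$. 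Without this step your proposal does not reach the stated exponent.
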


\begin{proof}
Let $s=p/r$. By duality and Property \eqref{Pro-2}, we have
\begin{align*}
\left\| \mathcal T_{w,r}f\right\|_{L_{p}(\Omega)}^r&= \sup_{\substack{ g \footnotesize{\mbox{ is positive}}\\\|g\|_{L_{s'}\leq 1}}}\int_{\Omega}\sum_{P \in \mathcal{P}}w^{\frac{r}{p}} \big(\mathbb{E}_{\kappa_{2}(P)}(|w^{-\frac1p}f| )\big)^r\chi_{P} gd\mathbb P\\
&=\sup_{\substack{ g \footnotesize{\mbox{ is positive}}\\\|g\|_{L_{s'}}\leq 1}}\sum_{P \in \mathcal{P}}\int_{\Omega}\mathbb E_{\kappa_2(P)}(w^{\frac{r}{p}}g) \big(\mathbb{E}_{\kappa_{2}(P)}(|w^{-\frac1p}f|)\big)^r\chi_{P} d\mathbb P\\
&\leq \sup_{\substack{g \footnotesize{\mbox{ is positive}}\\\|g\|_{L_{s'}}\leq 1}}\Big[\sum_{P\in\mathcal P_{1}}\int_{P} \big(\mathbb{E}_{\kappa_{2}(P)}(|w^{-\frac1p}f|)\big)^r\mathbb E_{\kappa_2(P)}(w^{\frac{r}{p}} g)d\mathbb P\\
&\qquad +\sum_{P\in\bigcup_{i=2}^\infty\mathcal P_{i}}\int_{E(P)} \big(\mathbb{E}_{\kappa_{2}(P)}(|w^{-\frac1p}f|)\big)^r\mathbb E_{\kappa_2(P)}(w^{\frac{r}{p}} g)d\mathbb P\Big],
	\end{align*}
where the last estimate is due to Property \eqref{Pro-5}, $\chi_P\leq 2\mathbb E_{\kappa_2(P)}(\chi_{E(P)})$ for  every $P\in\bigcup_{n=2}^\infty\mathcal P_n$.
Then by H\"older's inequality, we find
\begin{equation}\label{estimate-1-2-3}
\left\| \mathcal T_{w,r}f\right\|_{L_{p}(\Omega)}^r\leq \sup_{\substack{g \footnotesize{\mbox{ is positive}}\\\|g\|_{L_{s'}}\leq 1}}{\rm I}\cdot {\rm II} \cdot {\rm III},
\end{equation}
where
\begin{align*}
{\rm I}&:= \Big(\sum_{P\in\mathcal P_1}\Big\| \big(\mathbb{E}_{\kappa_{2}(P)}(|w^{-\frac1p}f|)\big)^r
\big(\mathbb E_{\kappa_{2}(P)}(w^{-\frac{p'}{p}}\chi_{P} )\big)^{\frac{1}{s}}\big(\mathbb{E}_{\kappa_{2}(P)}(w^{-\frac{p'}{p}})\big)^{-r}\Big\|_{L_s(\Omega)}^s
\\
&\quad + \sum_{P\in\bigcup_{i=2}^\infty\mathcal P_{i}}
\Big\| \big(\mathbb{E}_{\kappa_{2}(P)}(|w^{-\frac1p}f|)\big)^r
\big(\mathbb E_{\kappa_{2}(P)}(w^{-\frac{p'}{p}}\chi_{E(P)} )\big)^{\frac{1}{s}}\big(\mathbb{E}_{\kappa_{2}(P)}(w^{-\frac{p'}{p}})\big)^{-r}\Big\|_{L_s(\Omega)}^s
\Big)^{1/s},\\
{\rm II}&:= \Big(\sum_{P\in\mathcal P_1}\Big\| \mathbb{E}_{\kappa_{2}(P)}(w^{\frac{r}{p}}g)
\big(\mathbb E_{\kappa_{2}(P)}(w\chi_{P} )\big)^{\frac{1}{s'}}\big(\mathbb{E}_{\kappa_{2}(P)}(w)\big)^{-1}\Big\|_{L_{s'}(\Omega)}^{s'}\\
&\quad+ \sum_{P\in\bigcup_{i=2}^\infty\mathcal P_{i}}\Big\| \mathbb{E}_{\kappa_{2}(P)}(w^{\frac{r}{p}}g)
\big(\mathbb E_{\kappa_{2}(P)}(w\chi_{E(P)} )\big)^{\frac{1}{s'}}\big(\mathbb{E}_{\kappa_{2}(P)}(w)\big)^{-1}\Big\|_{L_{s'}(\Omega)}^{s'}\Big)^{1/s'},
\\
{\rm III} &:= \max\bigg{\{}\sup_{P\in\mathcal P_1}\big\|\big(\mathbb E_{\kappa_{2}(P)}(w^{-\frac{p'}{p}}\chi_{P} )\big)^{-\frac{1}{s}}\big(\mathbb E_{\kappa_{2}(P)}(w^{-\frac{p'}{p}})\big)^{r}\\
&\qquad\qquad\qquad\qquad\qquad\qquad\cdot\big(\mathbb E_{\kappa_{2}(P)}(w\chi_{P} )\big)^{-\frac{1}{s'}}\big(\mathbb{E}_{\kappa_{2}(P)}(w)\big)\chi_P\big\|_{L_\infty(\Omega)},\\
&\quad\quad\quad\sup_{P\in\bigcup_{i=2}^\infty\mathcal P_{i}} \big\|\big(\mathbb E_{\kappa_{2}(P)}(w^{-\frac{p'}{p}}\chi_{E(P)} )\big)^{-\frac{1}{s}}\big(\mathbb E_{\kappa_{2}(P)}(w^{-\frac{p'}{p}})\big)^{r}	\\&\qquad\qquad\qquad\qquad\qquad\qquad\cdot\big(\mathbb E_{\kappa_{2}(P)}(w\chi_{E(P)} )\big)^{-\frac{1}{s'}}\big(\mathbb{E}_{\kappa_{2}(P)}(w)\big)\chi_{E(P)}\big\|_{L_\infty(\Omega)}\bigg{\}}
\end{align*}
Let us first deal with $\rm I$. Note that
\begin{align*}
{\rm I}^s &\leq \sum_{P\in\mathcal P_1}\int_P \big(\mathbb{E}_{\kappa_{2}(P)}(|w^{-\frac1p}f|)\big)^{p} \big(\mathbb{E}_{\kappa_{2}(P)}(w^{-\frac{p'}{p}})\big)^{-p}w^{-\frac{p'}{p}}d\mathbb P\\
&\quad +\sum_{P\in\bigcup_{i=2}^\infty\mathcal P_{i}}\int_{E(P)} \big(\mathbb{E}_{\kappa_{2}(P)}(|w^{-\frac1p}f|)\big)^{p} \big(\mathbb{E}_{\kappa_{2}(P)}(w^{-\frac{p'}{p}})\big)^{-p}w^{-\frac{p'}{p}}d\mathbb P
\end{align*}
Denote by $v$ the dual weight of $w$ (i.e., $v=w^{-\frac{p'}{p}}$) and set $w^{-1/p}f=v\widetilde f$. Then by \eqref{simple} and the unweighted Doob inequality, we get
\begin{align*}
{\rm I}^s &\leq \sum_{P\in\mathcal P_1}\int_P \big(\mathbb{E}_{\kappa_{2}(P)}(|v\widetilde f|)\big)^{p} \big(\mathbb{E}_{\kappa_{2}(P)}(v)\big)^{-p}vd\mathbb P \\ &\quad +\sum_{P\in\bigcup_{i=2}^\infty\mathcal P_{i}}\int_{E(P)} \big(\mathbb{E}_{\kappa_{2}(P)}(|v\widetilde f|)\big)^{p} \big(\mathbb{E}_{\kappa_{2}(P)}(v)\big)^{-p}vd\mathbb P\\
&\leq \sum_{P\in\mathcal P_1}\int_P \big(\sup_{n} |\mathbb E_n^v(\widetilde f)|\big)^p vd\mathbb P+\sum_{P\in\bigcup_{i=2}^\infty\mathcal P_{i}}\int_{E(P)}\big(\sup_{n} |\mathbb E_n^v(\widetilde f)|\big)^p vd\mathbb P\\
&\lesssim \int_{\Omega} \big(\sup_{n} |\mathbb E_n^v(\widetilde f)|\big)^p vd\mathbb P \lesssim_p \int_{\Omega} |\widetilde f|^p vd\mathbb P=\|f\|_{L_p(\Omega)}^p.
\end{align*}
Similarly, set $w\widetilde g=w^{r/p}g$. Using again \eqref{simple} and the unweighted Doob inequality, we obtain
\begin{align*}
{\rm II}^{s'} & \leq \sum_{P\in\mathcal P_1}\int_P \big(\mathbb{E}_{\kappa_{2}(P)}(w^{\frac{r}{p}}g)\big)^{s'}\big(\mathbb{E}_{\kappa_{2}(P)}(w)\big)^{-s'}wd\mathbb P\\
&\quad +\sum_{P\in\bigcup_{i=2}^\infty\mathcal P_{i}}\int_{E(P)} \big(\mathbb{E}_{\kappa_{2}(P)}(w^{\frac{r}{p}}g)\big)^{s'}\big(\mathbb{E}_{\kappa_{2}(P)}(w)\big)^{-s'}wd\mathbb P\\
&\lesssim \int_\Omega \big(\sup_{n} \mathbb E_n^w(\widetilde g)\big)^{s'} w d\mathbb P \lesssim_s \int_{\Omega} \widetilde g^{s'} wd\mathbb P =\|g\|_{L_{s'}(\Omega)}^{s'}.
\end{align*}
It remains to handle $\rm III$. For convenience, let us denote
\begin{align*}
{\rm III}_1&:=\sup_{P\in\mathcal P_1}\big\|\big(\mathbb E_{\kappa_{2}(P)}(w^{-\frac{p'}{p}}\chi_{P} )\big)^{-\frac{1}{s}}\big(\mathbb E_{\kappa_{2}(P)}(w^{-\frac{p'}{p}})\big)^{r}\\
&\qquad\qquad\qquad\qquad\qquad\qquad\cdot\big(\mathbb E_{\kappa_{2}(P)}(w\chi_{P} )\big)^{-\frac{1}{s'}}\big(\mathbb{E}_{\kappa_{2}(P)}(w)\big)\chi_P\big\|_{L_\infty(\Omega)},\\
{\rm III}_2&:=\sup_{P\in\bigcup_{i=2}^\infty\mathcal P_{i}} \big\|\big(\mathbb E_{\kappa_{2}(P)}(w^{-\frac{p'}{p}}\chi_{E(P)} )\big)^{-\frac{1}{s}}\big(\mathbb E_{\kappa_{2}(P)}(w^{-\frac{p'}{p}})\big)^{r}	\\&\qquad\qquad\qquad\qquad\qquad\qquad\cdot\big(\mathbb E_{\kappa_{2}(P)}(w\chi_{E(P)} )\big)^{-\frac{1}{s'}}\big(\mathbb{E}_{\kappa_{2}(P)}(w)\big)\chi_{E(P)}\big\|_{L_\infty(\Omega)}.
\end{align*}
By the $A_p$ condition,  we have
\begin{align*}
{\rm III}_2 &=\sup_{P\in\bigcup_{i=2}^\infty\mathcal P_{i}} \big\|\big(\mathbb E_{\kappa_{2}(P)}(w^{-\frac{p'}{p}}\chi_{E(P)} )\big)^{-\frac{1}{s}}\big(\mathbb E_{\kappa_{2}(P)}(w^{-\frac{p'}{p}})\big)^{r}	\\&\qquad\qquad\qquad\qquad\qquad\qquad\cdot\big(\mathbb E_{\kappa_{2}(P)}(w\chi_{E(P)} )\big)^{-\frac{1}{s'}}\big(\mathbb{E}_{\kappa_{2}(P)}(w)\big)\chi_{E(P)}\big\|_{L_\infty(\Omega)}\\
&\leq \sup_{P\in\bigcup_{i=2}^\infty\mathcal P_{i}} \|\big[\big(\mathbb E_{\kappa_{2}(P)}(w^{-\frac{p'}{p}})\big)^{p-1} \mathbb{E}_{\kappa_{2}(P)}(w)\big]^{\frac1s}\|_{L_\infty(\Omega)}\\
&\quad\cdot \big\|\big[\big(\mathbb E_{\kappa_{2}(P)}(w\chi_{E(P)} )\big)^{-1}\mathbb{E}_{\kappa_{2}(P)}(w)\big]^{\frac{1}{s'}}\\
&\qquad \cdot \big[\big(\mathbb E_{\kappa_{2}(P)}(w^{-\frac{p'}{p}}\chi_{E(P)} )\big)^{-1}\big(\mathbb E_{\kappa_{2}(P)}(w^{-\frac{p'}{p}})\big)\big]^{\frac1s}\chi_{E(P)}\big\|_{L_\infty(\Omega)}\\
&\leq [w]_{A_p}^{\frac1s}\sup_{P\in\bigcup_{i=2}^\infty\mathcal P_{i}}
\big\|\big[\big(\mathbb E_{\kappa_{2}(P)}(w\chi_{E(P)} )\big)^{-1}\mathbb{E}_{\kappa_{2}(P)}(w)\big]^{\frac{1}{s'}}\\
&\qquad \cdot \big[\big(\mathbb E_{\kappa_{2}(P)}(w^{-\frac{p'}{p}}\chi_{E(P)} )\big)^{1-p}\big(\mathbb E_{\kappa_{2}(P)}(w^{-\frac{p'}{p}})\big)^{p-1}\big]^{\frac1{s(p-1)}}\chi_{E(P)}\big\|_{L_\infty(\Omega)}.
\end{align*}
Note that
$$\big(\mathbb E_{\kappa_{2}(P)}(w\chi_{E(P)} )\big)^{-1}\mathbb{E}_{\kappa_{2}(P)}(w)\geq 1$$
and
$$\big(\mathbb E_{\kappa_{2}(P)}(w^{-\frac{p'}{p}}\chi_{E(P)} )\big)^{1-p}\big(\mathbb E_{\kappa_{2}(P)}(w^{-\frac{p'}{p}})\big)^{p-1}\geq 1.$$
Therefore,
\begin{equation}\label{III3}
\begin{split}
{\rm III}_2&\leq [w]_{A_p}^{\frac1s} \sup_{P\in\bigcup_{i=2}^\infty\mathcal P_{i}}\big\|\big[\big(\mathbb E_{\kappa_{2}(P)}(w\chi_{E(P)} )\big)^{-1}\mathbb{E}_{\kappa_{2}(P)}(w)\\
&\qquad \cdot\big(\mathbb E_{\kappa_{2}(P)}(w^{-\frac{p'}{p}}\chi_{E(P)} )\big)^{1-p}\big(\mathbb E_{\kappa_{2}(P)}(w^{-\frac{p'}{p}})\big)^{p-1}\big]^{^{\max\{\frac r{p(p-1)},\frac 1{s'}\}}}\chi_{E(P)}\big\|_{L_\infty(\Omega)}.
\end{split}
\end{equation}
For every $P\in\bigcup_{i=2}^\infty\mathcal P_{i}$, by Property \eqref{Pro-5} and H\"older's inequality, we have
	\begin{align*}
\chi_{E(P)}\leq \chi_P\lesssim \mathbb E_{{{\kappa_{2}\left(P\right)}}}(\chi_{E(P)})\leq \big(\mathbb E_{{{\kappa_{2}\left(P\right)}}}(w\chi_{E(P)})\big)^{\frac 1p} \big(\mathbb E_{{{\kappa_{2}\left(P\right)}}}(w
		^{-\frac {p'}p}\chi_{E(P)})\big)^{\frac 1{p'}},
		\end{align*}
which implies
$$\chi_{E(P)}\lesssim \mathbb E_{{{\kappa_{2}(P)}}}(w\chi_{E(P)}) \big(\mathbb E_{{{\kappa_{2}(P)}}}(w
		^{-\frac {p'}p}\chi_{E(P)})\big)^{p-1}.$$
Hence,
\begin{equation*}\label{useful-fact}
\begin{split}
&\big(\mathbb E_{{{\kappa_{2}(P)}}}(w^{-\frac {p'}p}\chi_{E(P)})\big)^{1-p}\big(\mathbb E_{\kappa_{2}(P)}(w^{-\frac{p'}{p}})\big)^{p-1}	\big(\mathbb E_{{{\kappa_{2}(P)}}}(w\chi_{E(P)}) \big)^{-1}\mathbb E_{\kappa_{2}(P)}(w)\chi_{E(P)}\\
&\quad \lesssim \big(\mathbb E_{\kappa_{2}(P)}(w^{-\frac{p'}{p}})\big)^{p-1}\mathbb E_{\kappa_{2}(P)}(w)\leq [w]_{A_p}.
\end{split}
\end{equation*}
Plugging this estimate into \eqref{III3}, we get
\begin{align*}
 {\rm III}_2&\lesssim [w]_{A_p}^{\frac1s} [w]_{A_p}^{\max\{\frac r{p(p-1)},\frac1{s'}\}}=[w]_{A_p}^{\max\{\frac{r}{p-1},1\}}.
\end{align*}
Similarly, one can show that
$${\rm III}_1\leq [w]_{A_p}^{\max\{\frac{r}{p-1},1\}}.$$
Combining the above estimates of $\rm I, II, III$ with \eqref{estimate-1-2-3}, we conclude that
	\begin{align*}
		\left\| \mathcal T_{w,r}f\right\|_{L_{p}(\Omega)}
				&\lesssim [w]_{A_p}^{\max\{\frac1 r,\frac1{p-1}\}}\|f\|_{L_p(\Omega)}.
	\end{align*}
The proof is complete.
\end{proof}

It is easy to see that Lemma \ref{5.4'} still holds for $\mathcal T_{w,p}$. As a consequence of this result and Lemma \ref{sparse-enhanced}, the assertion in  Proposition \ref{5.4} can be enhanced  as follows.

\begin{corollary}\label{key-cor}
 Let $1<p<\infty$. Given a  scalar
	 weight $w\in A_p$, we have
	\begin{equation}\label{Lem-main-2}
		\left\|\mathcal T_{w,2}f\right\|_{L_p(\Omega)} \lesssim_{p,d}[w]_{A_p}^{\max\{\frac12 ,\frac1{p-1}\}} \|f\|_{L_p(\Omega)},\quad\forall f\in L_p(\Omega).
	\end{equation}
\end{corollary}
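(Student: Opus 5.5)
Corollary \ref{key-cor} is proved by splitting into the ranges $1<p\le 2$ and $2<p<\infty$, using the scalar version of Lemma \ref{5.4'} in the first and Lemma \ref{sparse-enhanced} with $r=2$ in the second.

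\emph{Scalar form of Lemma \ref{5.4'}.} In the scalar setting $\|W^{1/p}\widehat W_{\kappa_2(P)}\|=w^{1/p}\widehat w_{\kappa_2(P)}$. The operator $\mathcal T_{w,r}$ is the scalar analogue of $\mathcal T_{W,r}$ with the factors $\widehat w_{\kappa_2(P)}$ and $\widehat w_{\kappa_2(P)}^{-1}$ cancelled; these are positive $\mathcal F_{\kappa_2(P)}$-measurable scalars, so they pass through $\mathbb E_{\kappa_2(P)}$. The proof of Lemma \ref{5.4'} carries over verbatim:
\begin{enumerate}[\rm (i)]
\item Use Property \eqref{Pro-2} to take $\mathbb E_{\kappa_2(P)}$ of $w$.
\item Use $\mathbb E_n(w)\,\widehat w_n^{p}\le[w]_{A_p}$.
\item Use Property \eqref{Pro-5} and the disjointness in Property \eqref{Pro-1}.
\item Apply Proposition \ref{RMprop}.
\end{enumerate}
This gives
$$\|\mathcal T_{w,p}f\|_{L_p}\lesssim_p[w]_{A_p}^{\frac1{p-1}}\|f\|_{L_p}.$$

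\emph{Case $1<p\le2$.} Since $\ell_p\subset\ell_2$ contractively, $\mathcal T_{w,2}f\le\mathcal T_{w,p}f$ pointwise. The scalar Lemma \ref{5.4'} then yields the bound $[w]_{A_p}^{1/(p-1)}=[w]_{A_p}^{\max\{1/2,1/(p-1)\}}$, because $1/(p-1)\ge1$ here. (When $p=2$ this case applies directly and gives exponent $1$.)

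\emph{Case $2<p<\infty$.} Now $1<r=2<p$, so Lemma \ref{sparse-enhanced} applies with $r=2$ and gives the exponent $\max\{1/2,1/(p-1)\}$ directly. This bound is already \eqref{Lem-main-2}, so no interpolation is needed, unlike in the proof of Proposition \ref{5.4}. For $2<p\le 3$ the maximum is $1/(p-1)$, and for $p\ge3$ it is $1/2$.

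The only point needing care is the boundary $r=p$, which Lemma \ref{sparse-enhanced} excludes because it requires $r<p$. That situation arises only at $p=2$, which is covered by the first case. The substantive work is therefore already contained in Lemma \ref{sparse-enhanced}. Within that lemma, the delicate step is the estimate of the term ${\rm III}$. It uses Property \eqref{Pro-5} together with H\"older's inequality to compare $\mathbb E_{\kappa_2(P)}(w\chi_{E(P)})$ and $\mathbb E_{\kappa_2(P)}(w^{-p'/p}\chi_{E(P)})$ with the full averages, and this comparison is what yields the sharp exponent.
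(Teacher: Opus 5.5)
Your proposal is correct and follows the paper's proof exactly. For $1<p\le 2$ you use the pointwise bound $\mathcal T_{w,2}f\le\mathcal T_{w,p}f$ together with the scalar version of Lemma \ref{5.4'}, and for $2<p<\infty$ you apply Lemma \ref{sparse-enhanced} with $r=2$. Your sketch of why Lemma \ref{5.4'} carries over to $\mathcal T_{w,p}$ simply spells out what the paper dismisses as ``easy to see.''
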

\begin{proof} Clearly, for $2<p<\infty$, \eqref{Lem-main-2} is just a special case of Lemma \ref{sparse-enhanced}. As for $1<p\leq 2$,   we have
\begin{align*}
\mathcal T_{w,2} f \leq  \mathcal T_{w,p}f.
\end{align*}
The estimate  \eqref{Lem-main-2} now follows from  Lemma \ref{5.4'}.
\end{proof}

\begin{proof}[Proof of Theorem \ref{main1}]
Note that Proposition \ref{Sdominition} remains valid for $S_w$ and $\mathcal T_{w,2}$. That is,
$$S_w f \lesssim_{p} \mathcal T_{w,2} f,\quad\forall f\in L_p(\Omega).$$
This domination together with Corollary \ref{key-cor} gives
$$\|S_wf\|_{L_p(\Omega)}\lesssim_{p}[w]_{A_p}^{\max\{\frac12,\frac{1}{p-1}\}}\|f\|_{L_p(\Omega)},\quad \forall f\in L_p(\Omega).$$
Replacing $f$ by $w^{1/p}f$, we obtain the desired result.
\end{proof}

\bibliographystyle{alpha,amsplain}	
	
\end{document}